\numberwithin{equation}{section}
\theoremstyle{plain}
\newtheorem{Thm}{Theorem}[section]
\newtheorem*{Thm*}{Theorem}
\newtheorem{Lem}[Thm]{Lemma}
\newtheorem{Prop}[Thm]{Proposition}
\theoremstyle{definition}
\newtheorem{Rem}[Thm]{Remark}
\newtheorem{?}[Thm]{Problem}
\newcommand{\p}{\partial}
\newcommand{\R}{\mathbb{R}}
\newcommand{\e}{\varepsilon}
\newcommand{\E}{\mathbf{e}}
\newcommand{\fv}{\mathbf{f}}
\newcommand{\Torus}{\mathbb{T}}
\newcommand{\dv}{\text{div}}
\newcommand{\od}{\flat}
\newcommand{\md}{\sharp}
\newcommand{\lap}{\triangle}
\newcommand{\dnab}{\cdot \nabla}
\newcommand{\rhob}{\bar{\rho}}
\newcommand{\rhos}{\rho^s}
\newcommand{\rhot}{\tilde{\rho}}
\newcommand{\ub}{\bar{u}}
\newcommand{\mb}{\bar{m}}
\newcommand{\us}{u^s}
\newcommand{\ut}{\tilde{u}}
\newcommand{\uv}{\mathbf{u}}
\newcommand{\mv}{\mathbf{m}}
\newcommand{\wv}{\mathbf{w}}
\newcommand{\zv}{\mathbf{z}}
\newcommand{\uvb}{\bar{\mathbf{u}}}
\newcommand{\mvb}{\bar{\mathbf{m}}}
\newcommand{\uvt}{\tilde{\mathbf{u}}}
\newcommand{\mvt}{\tilde{\mathbf{m}}}
\newcommand{\mvs}{\mathbf{m}^s}
\newcommand{\mt}{\tilde{m}}
\newcommand{\ms}{m^s}
\newcommand{\mut}{\tilde{\mu}}
\newcommand{\xp}{x^{\perp}}
\newcommand{\Fv}{\mathbf{F}}
\newcommand{\gv}{\mathbf{g}}
\newcommand{\phib}{\bar{\phi}}
\newcommand{\psib}{\bar{\psi}}
\newcommand{\etat}{\tilde{\eta}}
\newcommand{\rv}{\mathbf{r}}
\newcommand{\abso}[1]{\left\lvert#1\right\rvert}
\newcommand{\norm}[1]{\left\lVert#1\right\rVert}
\newcommand{\jump}[1]{\left\ldbrack#1\right\rdbrack}
\date{}
\title{Time-asymptotic stability of planar Navier-Stokes shocks with spatial oscillations}
\author{Qian Yuan\thanks{The research is partially supported by the National Natural Science Foundation of China 12201614, Youth Innovation Promotion Association of CAS 2022003 and CAS Project for Young Scientists in Basic Research YSBR-031.
	}
}
\affil{\footnotesize Academy of Mathematics and Systems Science, Chinese Academy of Sciences, Beijing, China. \\
	qyuan@amss.ac.cn}
\begin{document}
	
\cfoot{} 

\clearpage\maketitle
\thispagestyle{empty}

\begin{abstract}

	This paper shows that for the three-dimensional compressible isentropic Navier-Stokes equations, the planar viscous shocks are time-asymptotically stable to suitably small initial perturbations with zero masses. 
	In particular, the perturbations consist of not only $ H^3 $-perturbations, but also periodic ones that oscillate at spatial infinity. In the former case, the final shock locations can be predicted in terms of the initial conditions, while in the latter the locations are subject to the dynamics of the oscillations.
	The stability analysis is based on the $L^2$-energy method. The key point is that the bad effect due to the compression of the shock waves can be removed by a combination of an anti-derivative technique and the use of Poincar\'{e} inequality in the normal and transversal directions, respectively.
\end{abstract}

\textbf{Keywords.} Compressible Navier-Stokes equations, Planar shock wave, Nonlinear asymptotic stability, Space periodic perturbation

\vspace{.2cm}

\textbf{Mathematics Subject Classification.} Primary 35Q30, 76L05, 35B35



\section{Introduction}

The three-dimensional (3D) compressible isentropic Navier-Stokes (NS) equations read,
\begin{equation}\label{NS}
\begin{cases}
\p_t \rho + \dv \mv = 0, \\
\p_t \mv + \dv \big( \frac{\mv\otimes\mv}{\rho} \big) + \nabla p(\rho) = \mu \lap \uv + \big(\mu+\lambda \big) \nabla \dv \uv,
\end{cases} \quad x\in\R^3, t>0, 
\end{equation}
where $ \rho(x,t)>0 $ is the density, $ \mv(x,t) = (m_1,m_2,m_3)(x,t) = (\rho \uv)(x,t) \in\R^3 $ is the momentum with $ \uv(x,t) = \big( u_1,u_2,u_3 \big)(x,t) $ being the velocity, the pressure $ p(\rho) $ is the gamma law, satisfying $ p(\rho) = \rho^\gamma $ with $ \gamma> 1, $ and the viscous coefficients  $ \mu $ and $ \lambda $ satisfy
\begin{equation}\label{viscous}
	\mu > 0 \quad \text{and} \quad \mu + \lambda \geq 0.
\end{equation}
Note that \cref{viscous} is a relaxed version of the physical condition that $ \mu >0 $ and $ \frac{2}{3} \mu + \lambda \geq 0. $ 

\vspace{.2cm}

When $ \mu = \lambda =0, $  the 3D compressible isentropic Euler equations \cref{NS} admit rich wave phenomena such as shock waves, rarefaction waves and compressible vortex sheets.
A Lax shock wave, $ (\rhob^s,\mb_1^s) = (\rhob^s, \mb_1^s)(x_1,t), $ is a weak entropy solution of the 1D compressible isentropic Euler equations, satisfying
\begin{equation}\label{Euler}
\begin{cases}
\p_t \rhob^s + \p_1  \mb_1^s = 0, & \\
\p_t \mb_1^s + \p_1  \big( \frac{(\mb^s_1)^2}{\rhob^s} \big) + \p_1  p(\rhob^s) = 0, & 
\end{cases} \qquad x_1\in\R, t>0,
\end{equation}
and
\begin{equation*}
	\big( \rhob^s, \mb^s_1 \big)(x_1 ,0) = \begin{cases}
		\big( \rhob_-, \mb_{1-} \big), \quad x_1 <0, & \\
		\big( \rhob_+, \mb_{1+} \big), \quad x_1 >0, &
	\end{cases}
\end{equation*}
where $ (\rhob_\pm, \mb_{1\pm}) $ are two constant states with $ \rhob_\pm>0 $, satisfying the Rankine-Hugoniot conditions,
\begin{equation}\label{RH}
	\begin{cases}
		-s (\rhob_+-\rhob_-) + \mb_{1+} - \mb_{1-} = 0, \\
		-s (\mb_{1+} - \mb_{1-}) + \frac{\mb_{1+}^2}{\rhob_+} - \frac{\mb_{1-}^2}{\rhob_-} + p(\rhob_+) - p(\rhob_-) = 0.
	\end{cases}
\end{equation}
The system \cref{Euler} possesses two eigenvalues $ \lambda_\pm(\rho, u_1) = u_1 \pm \sqrt{p'(\rho)} $. We assume that the shock $ (\rhob^s, \mb_1^s) $ belongs to the second family (the other case is similar) and satisfies the Lax's entropy condition,
\begin{equation}\label{entropy}
	\lambda_+(\rhob_+, \ub_{1+}) < s < \lambda_+(\rhob_-, \ub_{1-}) \quad \text{and} \quad s > \lambda_-(\rhob_-, \ub_{1-}).
\end{equation}
Then it follows from \cref{RH,entropy} (see \cite{Smoller1994}) that
\begin{equation*}
	\rhob_->\rhob_+ \quad \text{and} \quad \ub_{1-} > \ub_{1+}.
\end{equation*}
With viscosity effect, the Lax shock wave is smoothed out to be a smooth viscous shock, $ (\rho^s, m_1^s) = (\rho^s, m_1^s)(x_1-st), $ which is a traveling wave solution of the 1D compressible isentropic NS equations, satisfying
\begin{equation}\label{ode-shock}
	\begin{cases}
		-s (\rhos)' + (\ms_1)' = 0, \\
		-s (\ms_1)' + (\rhos (\us_1)^2 + p(\rhos))' = \mut (\us_1)'', \\
		(\rhos, \ms_1)(x_1) \to (\rhob_\pm, \mb_{1\pm}) \qquad \text{as } \ x_1 \to \pm \infty.
	\end{cases}
\end{equation}
Here $ \us_1 := \frac{\ms_1}{\rhos} $ and $ \mut := 2\mu + \lambda >0. $
In three dimensions, the planar viscous shock
\begin{equation}\label{planar-shock}
	(\rhos,\mvs)(x_1-st) = (\rhos, \ms_1, 0, 0)(x_1-st) 
\end{equation}
is a traveling wave solution of \cref{NS}, propagating along the $ x_1 $-axis with the shock speed $ s. $ 
This paper considers 
\begin{equation}\label{ic}
	(\rho, \mv)(x,t=0) = \big(\rhos, \mvs\big)(x_1) + (v_0, \wv_0 )(x) + (\phib_0, \psib_0)(x), \quad x \in \R^3,
\end{equation}
where the perturbation $ (v_0, \wv_0)(x) =  (v_0, w_{01}, w_{02}, w_{03} )(x) \in \R \times \R^3 $ is periodic over the 3D torus $ \Torus^3 = [0,1]^3 $ and of zero average,
\begin{equation}\label{zero-ave}
	\int_{\Torus^3} (v_0, \wv_0)(x) dx = 0,
\end{equation}  
and $ (\phib_0, \psib_0) = (\phib_0, \psib_{01}, \psib_{02}, \psib_{03}) $ is periodic in the transverse variables $ \xp $ and belongs to the $ H^3(\R \times \Torus^2) $ space. That is, we consider not only the localized perturbations on the infinitely duct $ \R\times\Torus^2, $ but also periodic ones that oscillate along the wave propagation.

\vspace{.2cm}

The study of shock waves is an important issue in the theory of both inviscid and viscous conservation laws, such as the compressible Euler  and Navier-Stokes equations in gas dynamics. 
We first overview several remarkable works in one dimension. 
The shock stability for hyperbolic conservation laws has been done by many articles; see \cite{Ilin1960,Liu1977} for instance. 
In the viscous case, based on a comparison principle, the study was initiated by \cite{Ilin1960} for the scalar conservation laws. Then \cite{G1986,MN} extended the stability results of viscous shock profiles to the systems by using the $ L^2 $-energy method, provided that the initial perturbations are of zero masses. 
The case for generic perturbations was solved by introducing additional diffusion waves and coupled diffusion waves propagating along the transversal characteristics; see \cite{Liu1997,SX} for the uniformly parabolic systems.
Later, \cite{MZ2004,LZ2015} made more subtle analysis to study some physical models including the compressible NS equations. 

The shock theory in multiple dimensions are way more limited than the 1D case. For the multi-d hyperbolic conservation laws, the stability of planar inviscid Lax shocks was established by \cite{Majda1983s} in local time, where the perturbed shock waves possess piece-wise smooth structures. If the solution space of the compressible Euler equations is relaxed to the $ L^\infty $-space, then \cite{CDK2015, MK2018} showed the non-uniqueness of the admissible solutions, in which the initial data are some Riemann data containing at least one planar Lax shock. We also refer to \cite{Chio2014} for the non-uniqueness of spatially periodic admissible solutions for the compressible Euler equations. 
The viscous shock theory for general multi-d viscous conservation laws is also unsatisfactory. 
Nevertheless, the nonlinear stability of planar viscous shocks was proved by \cite{GM1999,HZ2000} in the scalar case, based on detailed point-wise estimates of Green's functions; and see \cite{KVW2019} for the $ L^2 $-contraction property of planar viscous shocks.
For the multi-d systems, a linear or spectral stability of planar viscous shocks was established through a spectral method; see \cite{Z,FS2010,HLZ2017} and the references therein for instance. 
It should be mentioned that in multiple dimensions, the zero-mass condition does not imply the spectral one.
We also refer to the recent paper \cite{WW2022} for a nonlinear stability of planar viscous shocks in an infinitely duct $ \R\times\Torus^2 $, which will be discussed in \cref{Rem-loc} for more details.

\vspace{.1cm}

The study of spatially periodic solutions is important and interesting in the theory of conservation laws. For instance, the periodic solutions of hyperbolic conservation laws lose bounded total variations and are not able to be constructed through the celebrated Glimm scheme \cite{Glimm1965}. In that respect, the genuinely nonlinear conservation laws of at most two equations are exceptional; see \cite{Lax1957,Glimm1970,Dafe1995,Dafe2013}. 
However, the periodic oscillations for the non-isentropic compressible Euler equations possess totally different nonlinear effect from the isentropic case. The recent celebrated paper \cite{TY2023} showed that the spatially periodic solutions do not decay to constant states as time goes to infinity in general.
We also refer to \cite{TY1996,QX2015} for a large time existence and to \cite{MR1984,HMR1986} for the study of a resonance phenomenon of periodic solutions, respectively. 
Besides the constant states, the periodic perturbations of general Riemann solutions are also studied by recent works \cite{XYYsiam,YY2019,XYYindi,HY1shock,YY2022,HXY2022,Yuan2021}. 
It is found that different from the localized perturbations (e.g. \cite{Ilin1960}), the periodic oscillations at spatial infinity have a new  effect on the asymptotic orbital stability of shock waves. 
In particular, for the viscous conservation laws, the final shock locations are subject to the dynamics of the periodic oscillations; see \cite{XYYindi} for more details. 
In this regard, the study of spatial oscillations is important and interesting in the shock theory. 

\vspace{.1cm}

This paper, extending the 1D shock stability results of \cite{MN,G1986}, establishes the time-asymptotic stability of planar Navier-Stokes shocks in the three dimensions, in which the perturbations are allowed to oscillate at spatial infinity. 
For convenience, we assume a zero-mass condition as in \cite{G1986,MN,HY1shock} (see \cref{zero-mass-con}), while the case for generic perturbations which generate additional diffusion waves along the transversal characteristics will be studied in future work. 
The stability analysis about the Cauchy problem \cref{NS}, \cref{ic} is based on the $ L^2 $-energy method. However, in comparison with the localized perturbations and the 1D shock theory, there are several new difficulties:
\begin{itemize}
	\item[i) ] For positive time the perturbation is no longer periodic and not $ L^2 $-integrable along wave propagation. Thus, we construct a multi-dimensional ansatz $ (\rhot, \mvt) $ with two carefully chosen time-dependent shift curves, such that the ansatz oscillates together with the solution $ (\rho,\mv) $ at spatial infinity, and satisfies the zero mass property,
	\begin{equation}\label{intr-zero-mass}
		\int_{\R\times\Torus^2} ((\rho, m_1) - (\rhot, \mt_1))(x,t) dx = 0 \qquad \forall t\geq 0.
	\end{equation}
	This is a necessary condition in the use of anti-derivative technique and can also help determine the final shock location. 
	
	\item[ii) ] The compression of shock waves has a bad effect in the energy estimates. In one dimension, this obstacle can be overcome by using the anti-derivative technique (\cite{Ilin1960,MN,G1986}). However, this method is so far not that effective in multiple dimensions. In this paper, we follow the idea of \cite{Yuan2021} to decompose the perturbation into a zero mode and non-zero mode, respectively. Then the zero-mode estimates can be established with the aid of the 1D anti-derivative technique. Besides, it is an important observation that the shock compression does not influence the non-zero mode estimates with the aid of the Poincar\'{e} inequality in the transversal directions $ \Torus^2. $
	
	\item[iii) ] In the study of Navier-Stokes shocks, the inherent structures of Lagrangian and Eulerian coordinate systems are very different from each other even in one dimension. 
	The former has been used thoroughly by the previous works to study the 1D shock stability.
	In this paper, we find out new cancellations to overcome the obstacles arising from the Eulerian coordinates, which is owing essentially to the smallness of the velocity fields (the smallness is without loss of generality for weak shocks). We also refer to \cite{WW2022} for a different approach to deal with the difficulties.
\end{itemize}

\vspace{0.3cm}

\textbf{Outline of the paper}. In the next section, we show the construction of the ansatz and present the main result of this paper, \cref{Thm}. In \cref{Sec-pre}, we give some useful lemmas. After the preliminaries, we show the proof of the a priori estimates and the local existence in Sections \ref{Sec-esti} and \ref{Sec-local}, respectively. Afterwards, the time-asymptotic behaviors of the solution, together with the exponential decay rates of the non-zero modes, are shown in \cref{Sec-stability}.

\vspace{.3cm}

\textbf{Notations}. Throughout this paper, we use the following notations. 

1) Let $ \Omega $ denote the infinitely long nozzle domain,
\begin{equation*}
	\Omega := \R \times \Torus^2.
\end{equation*}

2) Let $ \jump{\cdot} $ denote the difference of the constant states associated with the shock wave, $ (\rhob^s, \mb_1^s) $. For instance, $ \jump{\rho} = \rhob_+ - \rhob_- $,  $ \jump{p} = p(\rhob_+) - p(\rhob_-) $ and $ \jump{u_1 m_1} = \ub_{1+} \mb_{1+} - \ub_{1-} \mb_{1-}$, etc.

3) Let $ C \geq 1 $ be a generic constant. And we write $ A \lesssim B $, $ A \gtrsim B $, $ A \sim B $ and $ A = O(1) B $ when $ A \leq CB $, $ A \geq C^{-1} B $, $ C^{-1} A \leq B \leq CA $ and $ \abso{A} \leq C \abso{B} $ hold, respectively.

4) For any given 1D function $ f=f(x_1) $, we use the lower index $ f_d $ to denote a shift operator, namely,
\begin{equation*}
	f_{d}(x_1):= f(x_1-d(t)) \quad \text{for any curve } \ d = d(t).
\end{equation*}

5) 
Let $ \E_i \ (i=1,2,3) $  denote the $ i $-th column of the $ 3\times 3 $ identity matrix Id$ _{3\times3}, $ and $ \delta_{ij} \ (i,j=1,2,3) $ denote the Kronecker delta function.


\vspace{0.3cm}

\section{Ansatz and Main result}

First, let $ (\rho_\pm, \mv_\pm) = (\rho_\pm, m_{1\pm}, m_{2\pm},m_{3\pm})(x,t) $ be the periodic solutions of \cref{NS} with the periodic initial data 
\begin{equation*}
	(\rho_\pm, \mv_\pm)(x,0) = (\rho_{\pm,0}, \mv_{\pm,0})(x) = (\rhob_\pm, \mvb_\pm) + (v_0, \wv_0)(x), \qquad x\in\R^3,
\end{equation*}
where $ (\rhob_\pm, \mvb_\pm) $ are the constant states of the shock wave at the far field and $ (v_0, \wv_0) $ is the periodic perturbation in \cref{ic}, satisfying \cref{zero-ave}. The global existence and exponential decay rate of $ (\rho_\pm, \mv_\pm) $ can be found in \cite{HXY2022}.

\begin{Lem}[\cite{HXY2022}]\label{Lem-per}
	Consider the Cauchy problem \cref{NS} with the periodic initial data
	\begin{equation}\label{ic-periodic}
		(\rho, \mv)(x,0) = (\rhob,\mvb) + (v_0, \wv_0)(x), \quad x\in\R^3,
	\end{equation}
	where  $ \rhob > 0, \mvb \in \R^3 $ are constants, and $ (v_0, \wv_0)\in H^{k+2}(\Torus^3) ~(k\geq 1) $ is periodic on $ \Torus^3 = [0,1]^3, $ satisfying 
	$$ \int_{\Torus^3} (v_0, \wv_0 ) dx =0. $$ 
	Then there exists $ \e_0>0 $ such that if $ \e := \norm{v_0,\wv_0}_{H^{k+2}(\Torus^3)} \leq \e_0, $ then the Cauchy problem \cref{NS}, \cref{ic-periodic} admits a unique global periodic solution $ (\rho, \mv) \in C\big((0,+\infty); H^{k+2}(\Torus^3)\big), $ satisfying 
	\begin{equation*}
		\int_{\Torus^3} (\rho - \rhob, \mv - \mvb )(x,t) dx = 0,\qquad t\geq 0
	\end{equation*}
	and 
	\begin{equation}\label{exp-decay}
		\norm{(\rho, \mv) - (\rhob, \mvb) }_{W^{k,\infty}(\Torus^3)} \lesssim \e e^{- c t}, \qquad t>0.
	\end{equation}
	Here the constant $ c>0 $ is independent of $ \e $ and $ t. $ 
\end{Lem}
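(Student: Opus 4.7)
The plan is to work with the zero-average perturbation $(v,\wv) := (\rho-\rhob, \mv-\mvb)$ and combine a Matsumura--Nishida-type energy estimate with the Poincar\'{e} inequality on $\Torus^3$ to upgrade the usual $L^2$-dissipation into exponential decay. The key structural observation is that the conservation form of \cref{NS} preserves the zero-average condition: integrating the continuity and momentum equations over $\Torus^3$ kills every spatial divergence, including the viscous terms $\mu\lap\uv$ and $(\mu+\lambda)\nabla\dv\uv$, so
\begin{equation*}
	\int_{\Torus^3}(v,\wv)(x,t)\,dx\equiv 0 \qquad\forall\,t\ge 0,
\end{equation*}
and consequently $\norm{v}_{L^2(\Torus^3)}\lesssim \norm{\nabla v}_{L^2(\Torus^3)}$ and likewise for $\wv$.

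Local existence in $C([0,T_0];H^{k+2}(\Torus^3))$ is standard via a parabolic-hyperbolic iteration, using parabolic smoothing for the momentum and a transport-type estimate for the density, with $T_0$ depending only on $\norm{v_0,\wv_0}_{H^{k+2}}$. For the a priori estimates I would pass to the variables $(v,\ut)$ with $\ut := \uv - \mvb/\rhob$, for which the basic $L^2$-energy identity produces dissipation only on $\nabla\ut$. To extract dissipation also for $\nabla v$ I add a Kawashima-type correction $\delta\int_{\Torus^3}\ut\dnab v\,dx$; computing its time derivative from the momentum equation yields the coercive term $-\delta\,\frac{p'(\rhob)}{\rhob}\int|\nabla v|^2\,dx$ up to contributions subordinate to the $\nabla\ut$-dissipation. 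Applying $\nabla^\alpha$ for $|\alpha|\le k+2$, bounding commutators by $C\mathcal{N}(t)\mathcal{D}(t)$ with $\mathcal{N}(t):=\norm{v,\wv}_{H^{k+2}}$, and fixing $\delta$ small yield an equivalent energy $\mathcal{E}(t)\sim\mathcal{N}(t)^2$ satisfying
\begin{equation*}
	\frac{d}{dt}\mathcal{E}(t)+\mathcal{D}(t)\le 0, \qquad \mathcal{D}(t)\gtrsim \norm{\nabla v}_{H^{k+1}}^2+\norm{\nabla\ut}_{H^{k+1}}^2.
\end{equation*}

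The preserved zero mean of $(v,\wv)$ combined with the Poincar\'{e} inequality gives $\mathcal{D}(t)\gtrsim\mathcal{E}(t)$, so Gronwall yields $\mathcal{E}(t)\le\mathcal{E}(0)e^{-2ct}$ with $c>0$ independent of $\e$ and $t$. A standard continuity argument closes the bootstrap $\mathcal{N}(t)\le C\e$ for $\e\le\e_0$ and extends the solution to all $t\ge 0$, while the claimed $W^{k,\infty}$-decay \cref{exp-decay} follows from the Sobolev embedding $H^{k+2}(\Torus^3)\hookrightarrow W^{k,\infty}(\Torus^3)$.

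The main obstacle is the balancing of the Kawashima correction: the weight $\delta$ must be chosen simultaneously small enough that $\mathcal{E}(t)+\delta\int\ut\dnab v\,dx$ stays equivalent to $\norm{v,\wv}_{H^{k+2}}^2$ uniformly in $t$, yet large enough that the positive dissipation it produces for $\nabla v$ survives the cross-terms generated by differentiating the pressure nonlinearity. Coupling this with the requirement that the nonlinear remainder $\mathcal{N}(t)\mathcal{D}(t)$ be absorbable into the left-hand side fixes the smallness threshold $\e_0$ and the decay rate $c$ in \cref{exp-decay}.
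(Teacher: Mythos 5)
This lemma is stated in the paper as an external citation (Lemma~\ref{Lem-per} is attributed to \cite{HXY2022}) and the paper gives no proof of its own, so there is nothing internal to compare against. That said, your proposed argument is the correct and essentially standard route for this type of result, and is of the same flavor as the cited reference: a Matsumura--Nishida-type $H^{k+2}$-energy estimate with a Kawashima correction $\delta\int\ut\dnab v\,dx$ to generate dissipation in $\nabla v$, combined with the Poincar\'e inequality on $\Torus^3$ for the zero-mean perturbation to upgrade $\mathcal{E}'+\mathcal{D}\le 0$ into $\mathcal{E}'+c\,\mathcal{E}\le 0$, and Sobolev embedding $H^{k+2}(\Torus^3)\hookrightarrow W^{k,\infty}(\Torus^3)$ for the pointwise decay.

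One detail worth making explicit when you carry this out: after passing from $(v,\wv)$ to $(v,\ut)$, the velocity perturbation $\ut=\uv-\mvb/\rhob$ does \emph{not} have exactly zero average on $\Torus^3$; only the conserved quantities $v$ and $\wv$ do. Since $\ut=(\wv\rhob-\mvb v)/(\rho\rhob)$, the average $\int_{\Torus^3}\ut\,dx$ is quadratically small in $\norm{v,\wv}$ rather than identically zero. So the Poincar\'e step $\mathcal{D}\gtrsim\mathcal{E}$ should be run on $(v,\wv)$ (as you in fact write), with the observation that $\norm{\nabla\wv}_{L^2}\lesssim\norm{\nabla\ut}_{L^2}+\norm{\nabla v}_{L^2}$ up to absorbable nonlinear terms; applying Poincar\'e directly to $\ut$ would leave an uncontrolled average and require a separate bootstrap to absorb it. This is a small bookkeeping issue, not a gap in the strategy, and the rest of the argument (local existence, commutator bounds by $C\mathcal{N}\mathcal{D}$, choice of $\delta$ and $\e_0$, continuation) is sound.
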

For convenience, we denote the periodic perturbations,
\begin{equation}\label{vw-pm}
	(v_\pm, \wv_\pm) := (\rho_\pm,\mv_\pm) - (\rhob_\pm, \mvb_\pm), \qquad x\in\R^3, t>0.
\end{equation}
Note that both $ (v_-, \wv_-) $ and $ (v_+, \wv_+) $ have zero averages over $ \Torus^3 $ for all $ t\geq 0 $ and decay exponentially fast in time. 

Denote $ \e := \norm{v_0,\wv_0}_{H^6(\Torus^3)} $. If $ \e>0 $ is small, then it follows from \cref{Lem-per} that
\begin{equation}\label{decay-vw}
	\norm{v_\pm, \wv_\pm}_{W^{4,\infty}(\R^3)} \lesssim \e e^{-ct} \qquad \forall t>0,
\end{equation}
and 
$$ \frac{1}{2} \rhob_\pm \leq \inf\limits_{x\in\R^3, t>0} \rho_\pm(x,t) \leq \sup\limits_{x\in\R^3,t>0} \rho_\pm(x,t) \leq 2 \rhob_\pm. $$ 
By \cref{RH,ode-shock}, one can prove that
\begin{equation}\label{sig-eta}
	\frac{\rhos(x_1)-\rhob_-}{\jump{\rho}} = \frac{\ms_1(x_1) - \mb_{1-}}{\jump{m_1}} := \eta(x_1) \qquad \forall x_1\in\R.
\end{equation}

\textbf{Ansatz.}
Motivated by \cite{XYYindi,HY1shock,Yuan2021}, we construct the ansatz for the Cauchy problem \cref{NS}, \cref{ic} as follows. For any $ x\in\R^3 $ and $ t\geq 0, $ set
\begin{equation}\label{ansatz}
	\begin{aligned}
		\rhot(x,t) & := \rho_-(x,t) \big(1-\eta(x_1-st-X(t)) \big) + \rho_+(x,t) \eta(x_1-st-X(t)), \\
		\mvt(x,t) & := \mv_-(x,t) \big(1-\eta(x_1-st-Y(t)) \big) + \mv_+(x,t) \eta(x_1-st-Y(t)),
	\end{aligned}
\end{equation}
where $ X(t) $ and $ Y(t) $ are two $ C^1 $ curves to be determined. Set
\begin{equation}\label{uvt}
	\uvt(x,t) := \frac{\mvt(x,t)}{\rhot(x,t)} \qquad \forall x\in\R^3, t>0.
\end{equation}
It follows from direct calculations that the ansatz \cref{ansatz} satisfies 
\begin{equation}\label{equ-ansatz}
	\begin{cases}
		\p_t \rhot + \dv \mvt = - \dv \Fv_1 - f_2, \\
		\p_t \mvt + \dv \big(\frac{\mvt\otimes\mvt}{\rhot} \big) + \nabla p(\rhot) - \mu \lap \uvt -(\mu+\lambda) \nabla \dv \uvt \\
		\qquad\qquad\quad = - \sum\limits_{i=1}^{3} \p_i \Fv_{3,i} - \fv_4,
	\end{cases} \qquad x \in \R^3, t>0,
\end{equation}
where the source terms, $ \Fv_1 = (F_{1,1}, F_{1,2}, F_{1,3}) \in \R^3, $  $ f_2 \in \R, $  $ \Fv_{3,i} = (F_{3,i1}, F_{3,i2}, F_{3,i3}) \in \R^3 $ for $ i=1,2,3, $ and $ \fv_4 = (f_{4,1},f_{4,2},f_{4,3}) \in \R^3 $, are given by
\begin{equation}\label{F}
	\begin{aligned}
	\Fv_1 & := (\mv_{+} - \mv_{-}) (\eta_{st+X}-\eta_{st+Y}),  \\
	f_2 & := \big[ (\rho_+-\rho_-) (s+X')  - (m_{1+}-m_{1-}) \big] \eta'_{st+X},\\
	\Fv_{3,i} & := u_{i-} \mv_- (1-\eta_{st+Y}) +  u_{i+} \mv_+ \eta_{st+Y} - \ut_i \mvt \\
	& \quad + \big[ p(\rho_-) (1-\eta_{st+Y}) + p(\rho_+) \eta_{st+Y} - p(\rhot)  \big] \E_i  \\
	& \quad  - \mu  \big[ \p_i \uv_- (1-\eta_{st+Y}) + \p_i \uv_+ \eta_{st+Y} - \p_i \uvt \big] \\
	& \quad - (\mu+\lambda) \big[ \dv \uv_- (1-\eta_{st+Y}) + \dv \uv_+ \eta_{st+Y} - \dv \uvt \big] \E_i, \\
	\fv_4 & := \big[ (\mv_+-\mv_-) (s+Y') -  ( u_{1+} \mv_+ - u_{1-} \mv_- ) -  (p(\rho_+) - p(\rho_-) ) \E_1 \\
	& \qquad  + \mu \p_1 (\uv_+ - \uv_-) + (\mu+\lambda) \dv (\uv_+ - \uv_-) \E_1 \big] \eta'_{st+Y},
	\end{aligned}
\end{equation}
respectively. The initial data of the ansatz is given by
\begin{equation}\label{ic-ansatz}
	(\rhot, \mvt)(x,0) = \big(\rhos(x_1-X(0)), \mvs(x_1-Y(0))\big) + (v_0, \wv_0)(x), \qquad x\in\R^3.
\end{equation}

In this paper, we expect the perturbation associated with the density and the normal component of momentum to carry zero mass over $ \Omega $ for all $ t\geq 0, $ i.e.
\begin{equation}\label{zero-mass}
	\int_\Omega (\rho-\rhot)(x,t) dx = \int_\Omega (m_1 - \mt_1)(x,t) dx = 0 \qquad \forall t\geq 0.
\end{equation}
Here we do not need the zero mass of the perturbation associated with the transverse components of momentum, $ (m_2,m_3) - (\mt_2,\mt_3) $.
Note that for all $ t > 0, $ both $ \abso{\Fv_1} $ and $ \abso{\Fv_{3,i}} $ for $ i=1,2,3 $ vanish as $ \abso{x_1} \to +\infty. $ 
Thus, if we can verify that
\begin{equation}\label{zero-mass-f}
	\int_{\Omega} f_2(x,t) dx = \int_{\Omega} f_{4,1}(x,t) dx = 0 \qquad \forall t>0,
\end{equation}
and
\begin{equation}\label{zero-mass-ic}
	\int_\Omega (\rho-\rhot)(x,0) dx = \int_\Omega (m_1 - \mt_1)(x,0) dx = 0,
\end{equation}
then \cref{zero-mass} follows from the conservation laws immediately.

By direct calculations, \cref{zero-mass-f} is equivalent to that the curves $ X $ and $ Y $ satisfy the ODEs,
\begin{equation}\label{ode-XY}
	X'(t) = - s + \frac{\mathcal{L}_1(X,t)}{\mathcal{L}_2(X,t)} \quad \text{and} \quad Y'(t) = -s + \frac{\mathcal{L}_3(Y,t)}{\mathcal{L}_1(Y,t)}, \qquad t>0,
\end{equation}
respectively, where 
\begin{equation}\label{L}
	\begin{aligned}
	\mathcal{L}_1(d,t) & := \int_\Omega (m_{1+} - m_{1-}) \eta'_{st+d}  dx, \\
	\mathcal{L}_2(d,t) & := \int_\Omega (\rho_+ - \rho_-)  \eta'_{st+d} dx, \\
	\mathcal{L}_3(d,t) & := \int_\Omega \Big\{ \big[ \big( u_{1+} m_{1+} - u_{1-} m_{1-}\big) + \big(p(\rho_+) - p(\rho_-) \big) \big] \eta'_{st+d} \\
	& \qquad\quad\ +  \mut  (u_{1+} - u_{1-}) \eta''_{st+d} \Big\}  dx,
\end{aligned} \qquad d \in \R, t>0.
\end{equation}
And \cref{zero-mass-ic} is equivalent to that $ X $ and $ Y $ take the initial values,
\begin{equation}\label{XY-0}
		(X,Y)(0) = \Big(\frac{-1}{\jump{\rho}} \int_\Omega \phib _0(x) dx, \frac{-1}{\jump{m_1}} \int_\Omega \psib_{01} (x) dx\Big) := (X_0, Y_0).
\end{equation}
\begin{Lem}\label{Lem-shift}
	Assume that \cref{RH,entropy} hold. Let $ (X_0, Y_0) $ be the constant pair given by \cref{XY-0}. Then there exists  $ \gamma_0 >0 $ such that if the periodic perturbation $ (v_0,\wv_0) $ satisfies \cref{zero-ave} and
	\begin{equation*}
		\e := \norm{(v_0, \wv_0)}_{H^6(\Torus^3)} \leq \gamma_0 \delta \quad \text{with} \ \delta := \abso{\jump{\rho}}>0,
	\end{equation*}
	then the problem \cref{ode-XY} with the initial data, 
	\begin{equation}\label{ic-XY}
		 (X,Y)(0) = (X_0, Y_0),
	\end{equation}
	admits a unique solution $ (X,Y) \in C^3(0,+\infty), $  satisfying that
	\begin{equation}\label{decay-XYp}
		\abso{X'(t)} + \abso{Y'(t)} \lesssim \e e^{-ct} \qquad \forall t>0,
	\end{equation}
	and
	\begin{equation}\label{decay-XY}
			\abso{X(t)-X_\infty} + \abso{Y(t)-Y_\infty} \lesssim \e \delta^{-1} e^{-ct} \lesssim \gamma_0 e^{-ct} \qquad \forall t > 0,
	\end{equation}
	where the limits $ X_\infty $ and $ Y_\infty $ are constants, given by
	\begin{equation}\label{XY-infty}
		\begin{aligned}
			X_\infty = X_0 \quad \text{and} \quad
			Y_\infty = Y_0 + Y_{\infty,p}, \qquad \text{respectively, }
		\end{aligned}
	\end{equation}
	with
	\begin{equation}\label{Y-shift-p}
		\begin{aligned}
			Y_{\infty,p} & := \frac{1}{\jump{m_1}} \int_0^{+\infty} \int_{\Torus^3} \Big\{ \big(u_{1+} m_{1+} - \ub_{1+} \mb_{1+} \big) - \big(u_{1-} m_{1-} - \ub_{1-} \mb_{1-} \big) \\
			& \qquad\qquad\qquad\qquad\quad + \big( p(\rho_+) - p(\rhob_+) \big) - \big( p(\rho_-) - p(\rhob_-) \big) \Big\} dx dt.
		\end{aligned}
	\end{equation}
\end{Lem}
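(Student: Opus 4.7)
The plan is to apply Picard--Lindelöf to each of the two decoupled ODEs in \cref{ode-XY} after reducing them to the form $X'=G_1(X,t)$, $Y'=G_2(Y,t)$ with exponentially small right-hand sides, and then to read off the limits $X_\infty,Y_\infty$ by integrating in time.

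First I would simplify each ODE by exploiting the identities $\int_\Omega \eta'_{st+d}\,dx=1$ and $\int_\Omega \eta''_{st+d}\,dx=0$ (which follow from $\eta(-\infty)=0$, $\eta(+\infty)=1$, $\eta'(\pm\infty)=0$, and $|\Torus^2|=1$), together with the Rankine--Hugoniot relations $\jump{m_1}=s\jump{\rho}$ and $\jump{u_1 m_1}+\jump{p}=s\jump{m_1}$. Writing $\rho_\pm=\rhob_\pm+v_\pm$ and $\mv_\pm=\mvb_\pm+\wv_\pm$, the constant contributions in $\mathcal{L}_1-s\mathcal{L}_2$ and $\mathcal{L}_3-s\mathcal{L}_1$ cancel, leaving numerators that are integrals of perturbation quantities (linear in $v_\pm,\wv_\pm$ for the $X$-equation; linear plus the quadratic remainders of $u_{1\pm}m_{1\pm}$ and $p(\rho_\pm)$ for the $Y$-equation) against $\eta'_{st+d}$ or $\eta''_{st+d}$. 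By \cref{decay-vw} these numerators are $O(\e e^{-ct})$ uniformly in $d$; as long as $\e/\delta$ is small, the denominators $\mathcal{L}_2(X,t)$ and $\mathcal{L}_1(Y,t)$ stay close to $\jump{\rho}$ and $\jump{m_1}$, so $|G_j(d,t)|+|\partial_d G_j(d,t)|\lesssim \e e^{-ct}$ uniformly in $d\in\R$.

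With these estimates the Cauchy--Lipschitz theorem yields a unique local $C^1$ solution from \cref{ic-XY}, and the uniform bound on $G_j$ precludes blow-up, extending the solution to $(0,+\infty)$ with \cref{decay-XYp}. Differentiating the ODE twice more and using that $\rho_\pm,\mv_\pm$ are smooth in time (taking $k\geq 3$ in \cref{Lem-per}, allowed by $(v_0,\wv_0)\in H^6(\Torus^3)$) gives $C^3$ regularity. Absolute integrability of $X',Y'$ in $t$ then produces $X_\infty,Y_\infty$ together with the rate in \cref{decay-XY}.

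To pin down the explicit values \cref{XY-infty,Y-shift-p}, I would work with the transverse averages $\bar v_\pm(x_1,t):=\int_{\Torus^2} v_\pm\,dx_\perp$, $\bar w_{1\pm}(x_1,t):=\int_{\Torus^2} w_{1\pm}\,dx_\perp$, and the periodic antiderivatives $\bar V_\pm(x_1,t)$ of $\bar v_\pm$ in $x_1$ (which are well defined because $\bar v_\pm$ has zero mean over $[0,1]$). The transversely averaged continuity equation gives $\partial_t \bar V_\pm=-\bar w_{1\pm}$, and a direct computation, combined with $\mathcal{L}_1-s\mathcal{L}_2=\mathcal{L}_2\,X'$, yields
\[
\jump{\rho}\,X'(t) = -\frac{d}{dt}\int_\R \eta'(\xi)\,(\bar V_+-\bar V_-)(\xi+st+X(t),t)\,d\xi.
\]
Integrating from $0$ to $+\infty$, the boundary value at $t=\infty$ vanishes by decay of $\bar V_\pm$, and the one at $t=0$ vanishes because $v_\pm(x,0)=v_0(x)$ is the same initial datum for both $\pm$, so $\bar V_+(\cdot,0)=\bar V_-(\cdot,0)$. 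This gives $X_\infty=X_0$. Running the same argument for the $Y$-equation, the transversely averaged momentum equation now has nonlinear sources $\overline{u_{1\pm}m_{1\pm}}$, $\overline{p(\rho_\pm)}$, and a viscous term $\mut\,\partial_1 \bar u_{1\pm}$; their linear parts in $(v_\pm,\wv_\pm)$ have zero $\Torus^3$-mean and telescope out, but the quadratic Reynolds-stress and pressure-nonlinear residuals do not, and accumulate in the time integral to produce exactly \cref{Y-shift-p}. The main obstacle lies precisely in this last step --- cleanly separating the residuals that survive the antiderivative substitution from those annihilated by the zero-mean conditions, and verifying that the viscous contribution integrates to zero after one more integration by parts.
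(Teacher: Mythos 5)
Your proposal is correct and reaches the same conclusion as the paper, but the mechanism you use to identify the explicit limits $X_\infty,Y_\infty$ is genuinely different from the paper's. The paper (Appendix A) integrates the ansatz momentum equation $\cref{equ-ansatz}_2$ over the moving compact set $\Omega^N_{(r,t)}$ bounded by $\Gamma^N_{r,\pm}(\tau)=r+s\tau+Y(\tau)\pm N$, averages the result over the shift parameter $r\in[0,1]$ to kill the periodic boundary contributions, and sends $N\to\infty$ using dominated convergence; this flux-balance over a moving slab is what makes $Y_{\infty,p}$ appear. You instead work at the level of the ODE $\cref{ode-XY}$ itself, introducing zero-mean periodic antiderivatives $\bar V_\pm,\bar W_{1\pm}$ in $x_1$ of the transverse averages $\bar v_\pm,\bar w_{1\pm}$, using $\partial_t\bar V_\pm=-\bar w_{1\pm}$ and $\partial_t\bar W_{1\pm}=-\overline{u_{1\pm}m_{1\pm}+p(\rho_\pm)}+\mut\partial_1\bar u_{1\pm}+\int_{\Torus^3}(u_{1\pm}m_{1\pm}+p(\rho_\pm))\,dx$, and rewriting $\jump{\rho}X'$ and $\jump{m_1}Y'$ as a total time derivative of a pairing of these antiderivatives with $\eta'$ plus, in the $Y$ case, the exact source $P(t)=\int_{\Torus^3}\{(u_{1+}m_{1+}-\ub_{1+}\mb_{1+})-(u_{1-}m_{1-}-\ub_{1-}\mb_{1-})+(p(\rho_+)-p(\rhob_+))-(p(\rho_-)-p(\rhob_-))\}\,dx$. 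Both approaches ultimately rest on exponential decay of $(v_\pm,\wv_\pm)$ and the identical initial data $v_\pm(\cdot,0)=v_0$, $\wv_\pm(\cdot,0)=\wv_0$, which make the boundary terms at $t=0$ and $t=\infty$ vanish. Your approach is more elementary in avoiding the $r$-averaging device and makes the role of the conservation-law structure entirely transparent, at the cost of needing to normalize the periodic antiderivatives carefully (zero mean in $x_1$ so the constant of integration, fixed by the zero $\Torus^3$-mean of $\bar w_{1\pm}$, is exactly $P(t)$ in the momentum case). The concern you raise about the viscous contribution is unfounded: pairing $\mut\partial_1(\bar u_{1+}-\bar u_{1-})$ against $\eta'(\xi)$ and integrating by parts in $\xi$ gives $-\mut\int\eta''(\bar u_{1+}-\bar u_{1-})\,d\xi$, which cancels the explicit $\mut\int\eta''(\bar u_{1+}-\bar u_{1-})\,d\xi$ term already present in $\mathcal{L}_3$; the cancellation is immediate, not an obstacle.
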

Note that the integral in \cref{Y-shift-p} is finite by \cref{Lem-per}. The proof of \cref{Lem-shift} is similar to that in \cite{XYYindi,HY1shock,Yuan2021} and we place it in \cref{App-shift}.

\vspace{0.3cm}

By Lemmas \ref{Lem-per} and \ref{Lem-shift}, we can observe that
\begin{equation*}
	\norm{(\rhot, \mvt)(x,t) - \left(\rhos(x_1-st-X_\infty), \mvs(x_1-st-Y_\infty)\right)}_{L^\infty(\R^3)} \to 0 \quad \text{as } t\to+\infty.
\end{equation*}
However, the pair $ \left(\rhos(x_1-st-X_\infty), \mvs(x_1-st-Y_\infty)\right) $ is a planar viscous shock of \cref{NS}, if and only if 
\begin{equation}\label{coincide-XY}
	X_\infty = Y_\infty.
\end{equation}
Thus, \cref{coincide-XY} is a necessary condition for the shock stability. 
From \cref{XY-infty,Y-shift-p}, the relation \cref{coincide-XY} is actually equivalent to that
\begin{equation}\label{zero-mass-con}
	\begin{aligned}
		& \int_\Omega \psib_{01}(x) dx - s \int_\Omega \phib_0(x) dx \\
		& \qquad = \int_0^{+\infty} \int_{\Torus^3} \Big\{ \big(u_{1+} m_{1+} - \ub_{1+} \mb_{1+} \big) - \big(u_{1-} m_{1-} - \ub_{1-} \mb_{1-} \big) \\
		& \qquad\qquad\qquad\qquad + \big( p(\rho_+) - p(\rhob_+) \big) - \big( p(\rho_-) - p(\rhob_-) \big) \Big\} dx dt.
	\end{aligned}
\end{equation}
This is exactly the zero-mass condition in \cite{MN} when the periodic perturbation in \cref{ic} vanishes, that is,
\begin{equation*}
	s \int_\Omega \phib_0(x) dx = \int_\Omega \psib_{01}(x) dx.
\end{equation*}
This is equivalent to that there exists a (unique) common number $ \sigma \in\R $ such that
\begin{equation}\label{zero-mass-loc}
	\int_\Omega \big(\rho_0(x) - \rhos(x_1-\sigma) \big) dx = \int_\Omega \big( m_{01}(x) - \ms_1(x_1-\sigma) \big) dx = 0.
\end{equation}

\vspace{.2cm}

With the constants $ X_0 $ and $ Y_0 $ given by \cref{XY-0}, we define
\begin{align}
	(\phi_0, \psi_0)(x) & = (\phi_0, \psi_{01}, \psi_{02}, \psi_{03})(x) \notag \\
	& := ( \rho-\rhot, \mv - \mvt )(x, t=0) \notag \\
	& = \big(\rhos(x_1)-\rhos(x_1-X_0), \ms_1(x_1) - \ms_1(x_1-Y_0), 0, 0 \big) + (\phib_0, \psib_0)(x), \label{phi0psi0}
\end{align}
and
\begin{align}
	(\Phi_0, \Psi_{01})(x_1) := \int_{-\infty}^{x_1} \int_{\Torus^2} (\phi_0, \psi_{01})(y_1, \xp) d\xp, \qquad x_1 \in \R. 
	\label{Phi0Psi0}
\end{align}

\vspace{.3cm}

Now we are ready to state the main result.

\begin{Thm}\label{Thm}
	Assume that \cref{RH,entropy} hold. Then there exist $ \delta_0>0, \gamma_0>0 $ and $ \nu_0>0 $ such that if $ \abso{\rhob^+ -\rhob^-} \leq \delta_0 $ and
	\begin{itemize}
		\item[1)] the perturbation $ (v_0, \wv_0) $ in \cref{ic} is periodic on torus $ \Torus^3 = [0,1]^3 $, satisfying \cref{zero-ave} and that 
		$$ \e:=\norm{ v_0,\wv_0}_{H^6(\Torus^3)} \leq \gamma_0 \abso{\rhob^+ -\rhob^-}; $$
		
		\item[2)] the perturbation $ (\phib_0,\psib_0) $ in \cref{ic} is periodic in the transverse variables $ \xp $ on $ \Torus^2 $ and belongs to the $ H^3(\Omega) \cap L^1(\Omega) $ space;
		moreover, the pairs \cref{phi0psi0,Phi0Psi0} satisfy that 
		$$ \nu:= \norm{ \phi_0, \psi_0}_{H^3(\Omega)} + \norm{ \Phi_0, \Psi_{01}}_{L^2(\R)} \leq \nu_0; $$
		
		\item[3)] the zero-mass condition \cref{zero-mass-con} holds true,
	\end{itemize}
	then the Cauchy problem \cref{NS}, \cref{ic} admits a unique classical solution $ (\rho,\uv)(x,t) $ globally in time, which is periodic in $ \xp \in \Torus^2 $ and satisfies that
	\begin{equation}\label{exist}
		\begin{aligned}
			(\rho, \uv) - (\rhot, \uvt) & \in C\big(0,+\infty; H^3(\Omega)\big), \\
			\rho-\rhot & \in L^2\big(0,+\infty; H^3(\Omega)\big), \\
			u_1 - \ut_1 & \in L^2\big(0,+\infty; L^2(\Omega)\big), \\
			\nabla (\uv-\uvt) & \in L^2\big(0,+\infty; H^3(\Omega)\big),
		\end{aligned}
	\end{equation}
	where $ (\rhot,\uvt) $ is the ansatz given by \cref{ansatz,uvt}. Moreover, it holds that
	\begin{equation}\label{behavior}
		\norm{ (\rho,\uv)(x,t)- (\rhos, \us_1, 0,0 )(x_1-st-X_\infty) }_{W^{1,\infty}(\R^3)} \rightarrow 0 \quad \text{as }\ t\rightarrow +\infty,
	\end{equation}
	where $ X_\infty (= Y_\infty) $ is the constant given by \cref{XY-infty,XY-0}; and the non-zero mode of the solution,
	\begin{align*}
		(\rho^\md, \uv^\md)(x,t) := (\rho,\uv)(x,t) - \int_{\Torus^2} (\rho,\uv)(x_1, \xp,t) d\xp, \qquad x\in\R^3, t>0,
	\end{align*}
	satisfies that
	\begin{equation}\label{exp-Thm}
		\norm{(\rho^\md, \uv^\md)(\cdot,t)}_{L^\infty(\R^3)} \leq C(\e+\nu) e^{-c_0 t} \qquad \forall t>0,
	\end{equation}
	where $ C>0 $ and $ c_0>0 $ are some generic constants, independent of $ \delta, \e, \gamma_0, \nu $ and $ t. $
\end{Thm}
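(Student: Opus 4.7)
My plan is the standard three-step argument: (i) establish a closed a priori $H^3$ estimate on the perturbation $(\phi,\psi) := (\rho,\mv) - (\rhot,\mvt)$ around the ansatz, (ii) combine this with local-in-time existence via a continuity argument to obtain the global solution described in \cref{exist}, and (iii) deduce \cref{behavior,exp-Thm} from the resulting space-time integrability. The zero-mass property \cref{zero-mass}, engineered into the ansatz through the shifts $(X,Y)$ supplied by \cref{Lem-shift}, is what makes the anti-derivative technique in the $x_1$ direction legitimate; the smallness $\e \lesssim \delta$ keeps the ansatz close to the planar shock via the exponential decay \cref{decay-vw} of $(v_\pm,\wv_\pm)$; and the weak-shock hypothesis $\delta \leq \delta_0$ furnishes the smallness of $|\uvt|$ that will be needed at the top-order estimates.

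\textbf{Mode decomposition.} Following observation (ii) of the introduction, I would split $(\phi,\psi) = (\phi^\od,\psi^\od) + (\phi^\md,\psi^\md)$, where $(\phi^\od,\psi^\od)(x_1,t) := \int_{\Torus^2}(\phi,\psi)(x_1,\xp,t)\,d\xp$ is the (one-dimensional) zero mode and $(\phi^\md,\psi^\md) := (\phi,\psi) - (\phi^\od,\psi^\od)$ the transversally mean-zero non-zero mode. For the zero mode, \cref{zero-mass} ensures that the antiderivatives $(\Phi,\Psi_1)(x_1,t) := \int_{-\infty}^{x_1}(\phi^\od,\psi_1^\od)(y_1,t)\,dy_1$ are well defined and vanish as $|x_1|\to\infty$; the classical Matsumura--Nishihara energy argument then controls $\|(\Phi,\Psi_1)(t)\|_{L^2}^2 + \|(\phi^\od,\psi^\od)(t)\|_{H^3}^2$ together with its dissipative integral, the Lax condition \cref{entropy} guaranteeing that the linear compression terms have the right sign in the anti-derivative formulation. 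Quadratic coupling with the non-zero mode is absorbed by the $L^\infty$-smallness of the latter. For the non-zero mode, the vanishing transverse average makes the Poincar\'{e} inequality $\|(\phi^\md,\psi^\md)\|_{L^2(\Omega)} \lesssim \|\nabla_{\xp}(\phi^\md,\psi^\md)\|_{L^2(\Omega)}$ available, so the offending compression contribution $(\rhos)'(x_1-st-X)|\phi^\md|^2$, whose $L^\infty$-coefficient has size $O(\delta)$, can be absorbed directly into the transversal dissipation without recourse to anti-derivatives.

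\textbf{The hard part.} I expect the main obstacle to be closing the highest-order estimates in Eulerian variables, especially for the non-zero mode. The linearization of the convective term $\dv(\mv\otimes\mv/\rho)$ and of $\nabla p(\rho)$ around the ansatz is not self-adjoint, so a naive energy computation produces unfavourable contributions of the type $(\rhos)'(x_1-st-X)|\nabla^3\uv^\md|^2$ that are not obviously dominated by the viscous dissipation $\mu\|\nabla\uv^\md\|_{H^3}^2$ alone. My strategy, as hinted in observation (iii) of the introduction, is to commute derivatives with the equations, use the continuity equation to trade density derivatives for velocity derivatives, and then exploit $|\uvt|=O(\delta)$ to absorb the residual pieces into the dissipation. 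The source terms $\Fv_1,f_2,\Fv_{3,i},\fv_4$ in \cref{F} are essentially benign: each carries the exponentially small factors $(v_\pm,\wv_\pm)$ or $X',Y'$ supplied by \cref{decay-vw,decay-XYp}, so their contributions close by a Gronwall step.

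\textbf{Conclusion.} Once the uniform a priori bound $\|(\phi,\psi)(t)\|_{H^3}^2 + \|(\Phi,\Psi_1)(t)\|_{L^2}^2 + \int_0^t(\|\phi\|_{H^3}^2 + \|u_1-\ut_1\|_{L^2}^2 + \|\nabla(\uv-\uvt)\|_{H^3}^2)\,ds \lesssim \nu^2 + \e^2$ is in hand, global existence together with \cref{exist} follows by the usual continuation argument. Convergence \cref{behavior} is then extracted from the $L^2$-in-time, $H^3$-in-space control via a standard interpolation/Sobolev embedding. Finally, the exponential decay \cref{exp-Thm} of the non-zero mode is obtained by observing that, thanks to the Poincar\'{e} inequality in $\xp$, the non-zero-mode energy functional satisfies a uniformly dissipative Gronwall inequality whose forcing decays at rate $e^{-ct}$; the Sobolev embedding $H^2(\Omega)\hookrightarrow L^\infty(\R^3)$ on the periodic-in-$\xp$ extension upgrades the resulting $H^3$-decay to the $L^\infty$-decay stated in \cref{exp-Thm}.
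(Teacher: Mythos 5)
Your overall strategy matches the paper's: construct the ansatz with shifts from \cref{Lem-shift}, use the zero-mass property to justify an anti-derivative $(\Phi,\Psi_1)$, beat the shock compression by combining anti-derivatives in $x_1$ with the Poincar\'{e} inequality in $\xp$, close an $H^3$ a priori estimate and then continuation, and finally obtain exponential decay of the non-zero mode via a Gronwall argument. However, the implementation you sketch diverges from the paper at a point that matters. You propose to decompose the \emph{system} into zero- and non-zero-mode subsystems and estimate $(\phi^\od,\psi^\od)$ and $(\phi^\md,\psi^\md)$ separately for the main a priori estimate, absorbing the coupling "by $L^\infty$-smallness." The paper instead decomposes only the \emph{norm}: it runs the anti-derivative estimates on $(\Phi,\Psi_1)$ alone, and then does the $H^3$ estimates on the undecomposed velocity perturbation $(\phi,\zeta)$ (not the momentum $\psi$; the viscous structure hinges on this choice), relying on the key inequality $\|\phi\|_{L^2(\Omega)}^2 \lesssim \|\p_1\Phi\|_{L^2(\R)}^2 + \|\nabla\phi\|_{L^2(\Omega)}^2$ and its $\psi_1$-analogue (the combined effect of anti-derivative dissipation and Poincar\'{e}) to control the $O(\delta)$-coefficient compression terms. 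The system is decomposed only in the final exponential-decay step (\cref{equ-md}), where the source terms $\rv_{1,i},r_2,\rv_3$ then genuinely carry bilinear zero-mode/non-zero-mode couplings, and those are the terms that force the $H^3$ (rather than $H^2$) regularity hypothesis.

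Two concrete gaps. First, your closing step invokes $H^2(\Omega)\hookrightarrow L^\infty(\R^3)$ to upgrade the non-zero-mode decay. But the natural Gronwall argument gives exponential decay of $\|(\phi^\md,\zeta^\md)\|_{H^1(\Omega)}$, not $H^2$, and the paper explicitly warns that 3D Gagliardo--Nirenberg interpolation can fail for $\xp$-periodic functions on $\R\times\Torus^2$; it therefore uses the tailored Lemma~\ref{Lem-GN} to interpolate the decaying $H^1$ quantity against the uniform $H^3$ bound, which you would need as well. Second, the continuation argument requires the local solution to satisfy $(\Phi,\Psi_1)\in C([0,T_0];H^4(\R))$, which is not automatic from $(\phi,\zeta)\in C([0,T_0];H^3(\Omega))$; the paper supplies a separate argument (comparing against auxiliary linear parabolic/transport problems, Theorem~\ref{Thm-local}) to establish this integrability. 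Your sketch does not address this, and without it the anti-derivative estimates are not justified on the local time interval.
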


\begin{Rem}
	The result of \cref{Thm} still holds true in two dimensions, since we allow the initial perturbation in \cref{ic} to be independent of either $ x_2 $ or $ x_3. $
\end{Rem}

\begin{Rem}
	Note that in the assumption 2) of \cref{Thm}, the $ L^2 $-integrable condition of the anti-derivative variable,
	\begin{equation}\label{con}
		(\Phi_0, \Psi_{01}) \in L^2(\R)
	\end{equation}
	is natural provided that $ (\phib_0, \psib_{01}) $ decays sufficiently fast as $ \abso{x_1} \to +\infty. $
	For instance, if
	\begin{equation}\label{decay-fast}
		\abso{(\phib_0, \psib_{01})(x)} \lesssim (1+ \abso{x_1}^2)^{-k/2}  \qquad \forall x\in \Omega \quad \text{with } \  k > 3/2,
	\end{equation}
	then the constants $ X_0 $ and $ Y_0 $ in \cref{XY-0} exist. The pair \cref{Phi0Psi0} is also well defined and satisfies that
	\begin{align}
		(\Phi_0, \Psi_{01})(x_1) & = -\int_{x_1}^{+\infty} \big(\rhos(y_1)-\rhos(y_1-X_0), \ms_1(y_1) - \ms_1(y_1-Y_0) \big) dy_1 \notag \\
		& \quad - \int_{x_1}^{+\infty} \int_{\Torus^2} ( \phib_0, \psib_{01})(y_1,\xp) d\xp dy_1,
		\quad x_1 \in\R. \label{PP}
	\end{align}
	As the viscous shock $ (\rhos, \ms_1) $ decays exponentially fast to the constant shock states at the far field (see \cref{Lem-shock}), then combining \cref{decay-fast}, one can get that the right-hand side (RHS) of \cref{Phi0Psi0} belongs to $ L^2(\R_-) $ and RHS of \cref{PP} belongs to $ L^2(\R_+) $, which yields \cref{con} immediately.
\end{Rem}

\begin{Rem}
	If we weaken the regularity of $ (\phib_0, \psib_0) $ from $ H^3(\Omega) $ to $ H^2(\Omega), $ then the problem \cref{NS}, \cref{ic} also admits a unique strong solution globally in time,  which satisfies that $ (\rho,\uv) - (\rhot, \uvt) \in C(0,+\infty;H^2(\Omega)) $ and
	\begin{align*}
		\norm{ (\rho,\uv)(x,t)- (\rhos, \us_1, 0,0 )(x_1-st-X_\infty) }_{L^\infty(\R^3)} \rightarrow 0 \quad \text{as }\ t\rightarrow +\infty.
	\end{align*}
	However, in this case the exponential decay rate \cref{exp-Thm} can not be obtained, since we are not able to control some interactions of the zero mode and non-zero mode of the perturbation with only $ H^2 $-regularity; see the details in the proof of \cref{Lem-exp-2}.
\end{Rem}

\begin{Rem}
	In comparison to the localized perturbations studied by \cite{G1986,MN}, the shift component \cref{Y-shift-p} in this paper is generated essentially by the periodic perturbations and depends on the structure of the system \cref{NS}, i.e. the shift is subject to the dynamics of the oscillations. Thus, the expression \cref{Y-shift-p} in Eulerian coordinates is different from that in Lagrangian ones (see \cite{HY1shock}). 
	It is interesting to find a time-independent formula to compute the limit $ Y_{\infty,p} $, which can help understand more clearly the influence of periodic perturbations on the shock stability. However, this is still open except for some 1D scalar models such as the inviscid convex conservation laws and the viscous Burgers' equation; see \cite{XYYsiam,YY2019,XYYindi}.
\end{Rem}
\begin{Rem}\label{Rem-loc}

	Recently, Wang-Wang \cite{WW2022} also studied the nonlinear stability of planar Naiver-Stokes shocks for \cref{NS} in the infinitely duct $ \Omega = \R \times\Torus^2 $.
	It is noted that both the analysis and the stability result of \cite{WW2022} are different from this paper, and besides, we consider an additional periodic perturbation. In fact, \cite{WW2022} utilized a weighted $ L^2 $-relative entropy method to achieve a nonlinear stability of planar Naiver-Stokes shocks with generic $ H^2(\Omega) $-perturbations. 
	Their analysis is powerful since they do not require the zero mass or the spectral condition, while the final shock locations are not determined there. 
	In this paper, the anti-derivative technique plays an important role in determining the final shock locations as constants. 
	That is, we have proved the orbital stability of the planar Navier-Stokes shocks with asymptotic phase.
	
\end{Rem}

\vspace{0.3cm}

\section{Preliminaries}\label{Sec-pre}

For simple notations, throughout the paper we denote 
\begin{align*}
	\delta:= \abso{\jump{\rho}} > 0, \qquad \e := \norm{v_0, \wv_0}_{H^6(\Torus^3)},
\end{align*}
\begin{equation}\label{shock-ts}
	(\rhot^s, \mvt^s)(x_1,t) := (\rhos, \mvs)(x_1-st-X_\infty) \quad \text{and} \quad \uvt^s := \frac{\mvt^s}{\rhot^s},
\end{equation}
where $ X_\infty (=Y_\infty) $ is the constant in \cref{XY-infty}.
And we list some useful lemmas as follows.

\begin{Lem}[\cite{MN,SX}]\label{Lem-shock}
	Assume that \cref{RH,entropy} hold.
	Then the smooth viscous shock, $ \big( \rhos, \ms_1 \big)(x_1-st) $, solving \cref{ode-shock} exists uniquely (up to a shift) and satisfies the following properties,
	\begin{itemize}
		
		\item[i) ] $ (\rhos)'(x_1) < 0 $ and $ (\us_1)'(x_1) < 0 $  for all $ x_1 \in\R $;

		\item[ii) ] $ \delta^2 e^{-c_1 \delta \abso{x_1}} \lesssim \abso{(\us_1)'(x_1)} \lesssim \delta^2 e^{-c_2 \delta \abso{x_1}} $ for all $ x_1 \in\R, $ where $ c_1 > c_2 $ are two positive constants, independent of $ \delta $ and $ x_1 $;
		
		\item[iii) ] $ \abso{(\us_1)''(x_1)} \lesssim \delta \abso{(\us_1)'(x_1)} $ for all $ x_1 \in\R $.
	\end{itemize}
\end{Lem}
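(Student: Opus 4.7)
The plan is first to collapse the ODE system \eqref{ode-shock} to a single scalar first-order equation. Integrating the first equation once and using the far-field conditions together with the Rankine–Hugoniot relation \eqref{RH} yields
\begin{equation*}
m^s_1(x_1) = s\,\rho^s(x_1) + \alpha, \qquad \alpha := \bar m_{1\pm} - s\bar\rho_\pm,
\end{equation*}
so $u^s_1 = s + \alpha/\rho^s$ is an affine decreasing function of $1/\rho^s$. Substituting into the second equation of \eqref{ode-shock} and integrating once more (using that $(u^s_1)' \to 0$ at $\pm\infty$) gives a reduced equation
\begin{equation*}
\tilde\mu\,(u^s_1)'(x_1) = F(\rho^s(x_1)),
\end{equation*}
where $F$ is a smooth function of $\rho$ alone whose constants of integration are fixed so that $F(\bar\rho_-)=F(\bar\rho_+)=0$.

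Next I would perform a phase–plane analysis. Using the Lax condition \eqref{entropy} one checks that $F'(\bar\rho_-)>0$ and $F'(\bar\rho_+)<0$, so $\bar\rho_+$ is an attractor and $\bar\rho_-$ a repeller for the scalar ODE, and $F(\rho)<0$ strictly in the open interval between them. A standard monotone-ODE argument then produces a unique heteroclinic orbit (up to translation) connecting $\bar\rho_-$ at $-\infty$ to $\bar\rho_+$ at $+\infty$, along which $(\rho^s)'<0$ and $(u^s_1)'<0$, proving item i) and the existence statement.

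For the sharp two-sided exponential bound in ii), I would rescale by writing $\rho = \bar\rho_- + \delta\,\zeta$ with $\zeta \in (0,1)$. In these variables, since $F$ is smooth and has simple zeros at $\zeta=0,1$, one gets $F(\rho) = -\delta^{2}\,G_{\delta}(\zeta)$ with $G_\delta(\zeta) = C\zeta(1-\zeta) + O(\delta)$ and $C$ bounded above and below uniformly in $\delta$; thanks to the affine relation between $u^s_1$ and $\rho^s$ derived in step one, the scalar ODE becomes $\zeta' = -c\,\delta\,\zeta(1-\zeta) + O(\delta^{2})$, i.e.\ an approximately logistic equation. Integrating it shows $\zeta(x_1) \sim e^{c\delta x_1}$ as $x_1 \to -\infty$ and $1-\zeta(x_1) \sim e^{-c\delta x_1}$ as $x_1\to+\infty$ at the linearized rate, while in the middle region $\zeta$ is bounded away from $0,1$; both the upper (fast) and lower (slow) rates $c_1>c_2>0$ arise from matching this linearized behavior against the globally quadratic shape of $F$, which is why one needs both bounds. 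Combining with $(u^s_1)' = F(\rho^s)/\tilde\mu \sim -\delta^{2} \zeta(1-\zeta)$ yields the claimed two-sided bound on $|(u^s_1)'|$.

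Finally, iii) is immediate from differentiating the reduced equation: $(u^s_1)'' = \tilde\mu^{-1} F'(\rho^s)(\rho^s)'$. Because $F$ has both of its zeros inside an interval of length $\delta$, the mean-value theorem gives $|F'(\rho^s)| \lesssim \delta$ along the orbit; together with $|(\rho^s)'| \lesssim |(u^s_1)'|$ (from the affine relation $m^s_1 = s\rho^s + \alpha$), this yields $|(u^s_1)''| \lesssim \delta\,|(u^s_1)'|$. The main obstacle I expect is keeping the constants in the phase-plane step uniform in $\delta$: this forces the rescaling $\zeta=(\rho-\bar\rho_-)/\delta$ and a careful expansion of $F$ showing that the $\delta$-dependent lower-order terms do not disturb the logistic profile, which is exactly what buys the sharp lower bound on $|(u^s_1)'|$.
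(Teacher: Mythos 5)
The paper does not prove \cref{Lem-shock}; it cites \cite{MN,SX}, and your reconstruction is essentially the argument of those references (and of Goodman): conserve the mass flux to make $m^s_1$ affine in $\rho^s$, integrate the momentum equation once to get the scalar autonomous equation $\tilde\mu\,(u^s_1)'=F(\rho^s)$ with $F$ uniformly convex and vanishing at $\bar\rho_\pm$, then do a phase-plane analysis and rescale by $\delta$. The computations of the signs $F'(\bar\rho_-)>0$, $F'(\bar\rho_+)<0$, the uniform quadratic bound $|F|\sim\delta^2\zeta(1-\zeta)$, the logistic linearization giving the $e^{-c\delta|x_1|}$ rates, and the derivation of iii) from $\tilde\mu(u^s_1)''=F'(\rho^s)(\rho^s)'$ with $|F'(\rho^s)|\lesssim\delta$ and $|(\rho^s)'|\sim|(u^s_1)'|$ are all correct. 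One small slip: you set $\rho^s=\bar\rho_-+\delta\zeta$ with $\zeta\in(0,1)$, but since $\rho^s$ decreases from $\bar\rho_-$ to $\bar\rho_+=\bar\rho_--\delta$ the correct parametrization is $\rho^s=\bar\rho_--\delta\zeta$ (equivalently $\bar\rho_++\delta(1-\zeta)$); with this fix the approximately logistic ODE reads $\zeta'=+c\delta\,\zeta(1-\zeta)+O(\delta^2)$ and your later asymptotics become internally consistent. The origin of $c_1>c_2$ could be stated a bit more plainly: the linearization rates $|\tilde G'(\bar\rho_\pm)|\sim\delta$ at the two rest points are close but not identical, so one takes $c_1\delta$ slightly above the larger and $c_2\delta$ slightly below the smaller to absorb the $O(\delta)$ corrections and the matching region, which is the mechanism you describe.
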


\vspace{.1cm}

As indicated in \cite{HY2020}, the 3D functions that are periodic in $ \xp = (x_2, x_3) $ may not satisfy the 3D Gagliardo-Nirenberg (G-N) inequalities in general (counterexample: the 1D functions in the $ C_c^\infty(\R) $ space are periodic in $ \xp $ and satisfy the 1D G-N inequalities, but not the 3D ones; see \cite{HY2020} for the details). Thus we should utilize the following G-N type inequality relevant to the unbounded domain $ \Omega = \R \times\Torus^2. $

\begin{Lem}[\cite{HY2020}]\label{Lem-GN}
	Assume that $ u(x) $ is periodic in $ \xp = (x_2, x_3) $ and belongs to the $ L^q(\Omega) $ space with $ \nabla^m u \in L^r(\Omega), $ where $ 1\leq q,r\leq +\infty $ and $ m\geq 1 $. Then there exists a decomposition $ u(x) = \sum\limits_{k=1}^{3} u^{(k)}(x) $ such that 
	\begin{itemize}
		\item[1)] each $ u^{(k)} $ satisfies that	
		\begin{align*}
			\norm{\nabla^j u^{(k)}}_{L^p(\Omega)} \lesssim \norm{\nabla^j u }_{L^p(\Omega)},
		\end{align*}
		where $ j\geq 0 $ is any integer and $ p\in[1,+\infty] $ is any number;
		
		\item[2)] each $ u^{(k)} $ satisfies the $ k $-dimensional G-N inequality, namely,
		\begin{equation}\label{G-N-type-1}
			\norm{\nabla^j u^{(k)}}_{L^p(\Omega)} \lesssim \norm{\nabla^{m} u}_{L^{r}(\Omega)}^{\theta_k} \norm{u}_{L^{q}(\Omega)}^{1-\theta_k},
		\end{equation}
		where $ 0\leq j< m $ is any integer and $ 1\leq p \leq +\infty $ is any number, satisfying 
		$$ \frac{1}{p} = \frac{j}{k} + \Big(\frac{1}{r}-\frac{m}{k}\Big) \theta_k + \frac{1}{q}\big(1-\theta_k\big) \quad \text{with} \quad \frac{j}{m} \leq \theta_k \leq 1. $$
	\end{itemize}
	Moreover, it holds that 
	\begin{equation}\label{G-N-type-2}
		\norm{\nabla^j u}_{L^p(\Omega)} \lesssim \sum_{k=1}^3 \norm{\nabla^{m} u}_{L^{r}(\Omega)}^{\theta_k} \norm{u}_{L^{q}(\Omega)}^{1-\theta_k}.
	\end{equation}
\end{Lem}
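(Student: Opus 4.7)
The approach is to decompose $u$ by combining a Fourier series in $\xp$ with a smooth frequency cutoff in $x_1$, so as to isolate three regimes whose natural scalings are $1$, $2$, and $3$ dimensional respectively. Writing $\hat u_\xi(x_1):=\int_{\Torus^2}u(x_1,\xp)e^{-2\pi i\xi\cdot\xp}\,d\xp$ for the $\xp$-Fourier coefficients ($\xi\in\mathbb Z^2$) and fixing an even cutoff $\chi\in C_c^\infty(\R)$ with $\chi\equiv 1$ on $[-1,1]$ and $\mathrm{supp}\,\chi\subset[-2,2]$, I would set
\begin{equation*}
u^{(1)}(x):=\hat u_0(x_1),\qquad u^{(2)}:=\chi(D_{x_1})(u-u^{(1)}),\qquad u^{(3)}:=(I-\chi(D_{x_1}))(u-u^{(1)}),
\end{equation*}
where $\chi(D_{x_1})$ denotes the Fourier multiplier acting only on $x_1$. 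Thus $u=u^{(1)}+u^{(2)}+u^{(3)}$, with $u^{(1)}$ independent of $\xp$, and $u^{(2)}$, $u^{(3)}$ of zero $\xp$-average and supported in low and high $x_1$-frequencies respectively.

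Property 1) is a soft consequence of this construction. For $u^{(1)}$, Jensen's inequality in $\xp$ (using $|\Torus^2|=1$) together with the fact that $\partial_{x_1}^j$ commutes with the $\xp$-average gives $\|\partial_{x_1}^ju^{(1)}\|_{L^p(\R)}\lesssim\|\nabla^ju\|_{L^p(\Omega)}$; for $u^{(2)}$ and $u^{(3)}$, the operator $\chi(D_{x_1})$ is convolution in $x_1$ with a Schwartz kernel of finite $L^1$-norm, so Young's inequality (applied fibrewise in $\xp$) yields $L^p$-boundedness for every $p\in[1,\infty]$, and $I-\chi(D_{x_1})$ is a bounded perturbation of the identity. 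Since $\nabla^j$ commutes with each projection, all orders of derivatives follow at once. For Property 2) I would argue case by case. \emph{When $k=1$,} $u^{(1)}$ depends only on $x_1$, so every norm on $\Omega$ equals the corresponding norm on $\R$, and the classical 1D GN on $\R$ applies directly. \emph{When $k=2$,} the low $x_1$-frequency support of $u^{(2)}$ yields the Bernstein estimate $\|\partial_{x_1}^\alpha u^{(2)}\|_{L^p(\Omega)}\lesssim\|u^{(2)}\|_{L^p(\Omega)}$ for every $\alpha\geq 0$; this neutralizes all normal derivatives occurring in $\nabla^j u^{(2)}$ and reduces the problem to controlling pure $\xp$-derivatives, for which I would apply the standard 2D GN on $\Torus^2$ slicewise in $x_1$ (valid thanks to the vanishing $\xp$-mean and the usual Poincar\'{e}) and integrate in $x_1$ via H\"{o}lder's inequality; this gives the 2D scaling at the extreme $\theta_2=j/m$, and the full range $\theta_2\in[j/m,1]$ is then obtained by interpolating with the endpoint $\theta_2=1$ delivered by the 2D Sobolev embedding slicewise. \emph{When $k=3$,} both the $x_1$-frequency (bounded below by $1$) and the $\xp$-frequency (bounded below by $2\pi$ due to zero $\xp$-mean) of $u^{(3)}$ are invertible, giving a full Poincar\'{e}-type estimate $\|u^{(3)}\|_{L^p(\Omega)}\lesssim\|\nabla u^{(3)}\|_{L^p(\Omega)}$; a Littlewood--Paley decomposition in all three frequency variables combined with Bernstein's inequalities at each dyadic scale then reproduces the standard 3D GN scaling.

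Estimate \cref{G-N-type-2} follows immediately from the triangle inequality $\|\nabla^ju\|_{L^p}\leq\sum_{k=1}^3\|\nabla^ju^{(k)}\|_{L^p}$, the $k$-dimensional GN just established in each case of Property 2), and the norm comparisons of Property 1). The main obstacle I anticipate is the $k=2$ case: the slicewise 2D GN plus H\"{o}lder in $x_1$ only produces the extreme $\theta_2=j/m$, so to reach the full range one must combine it with the 2D Sobolev endpoint $\theta_2=1$ and interpolate carefully, while simultaneously keeping the Bernstein reduction uniform across all Lebesgue endpoints $p\in[1,\infty]$. The $k=3$ case is conceptually similar in difficulty but more routine once the $\xp$-Poincar\'{e} and $x_1$-Bernstein tools are in hand.
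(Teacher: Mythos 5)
This lemma is imported from \cite{HY2020} with no proof given in the present paper, so there is no in-paper argument to compare against; I will assess your construction on its own terms. Your decomposition---zero $\xp$-mode $u^{(1)}$, and then a smooth $x_1$-frequency cutoff applied to the remainder to produce $u^{(2)}$ (low $x_1$-frequency) and $u^{(3)}$ (high $x_1$-frequency)---is a natural choice, and Property~1) is indeed soft as you say.

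The genuine gaps sit in Property~2). For $k=2$, Bernstein \emph{lowers} the $x_1$-derivative count: a component $\partial_{x_1}^a\nabla_{\xp}^b u^{(2)}$ of $\nabla^j u^{(2)}$ with $a\geq1$ is controlled by $\norm{\nabla_{\xp}^b u^{(2)}}_{L^p}$ of the \emph{lower} transversal order $b<j$; applying the fiberwise 2D GN to that term gives an exponent different from the $\theta_2$ prescribed for order $j$, and the resulting bounds do not simply add. The decisive fix---which you mention in passing but do not put where it is load-bearing---is the transverse Poincar\'e inequality (valid since every $\nabla_{\xp}^b u^{(2)}$ has zero $\xp$-mean), used to promote $\norm{\nabla_{\xp}^b u^{(2)}}_{L^p}$ back to $\norm{\nabla_{\xp}^j u^{(2)}}_{L^p}$ \emph{before} the fiberwise GN is invoked. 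Second, at the Sobolev endpoint $\theta_2=1$, fiberwise 2D Sobolev on $\Torus^2$ produces the mixed norm $L^{p}_{x_1}(L^r_{\xp})$ on the right-hand side (with the Sobolev exponent $p>r$), not $L^r(\Omega)$; Bernstein alone does not bridge these---one must first invoke Minkowski's integral inequality to swap the order of integration (legitimate precisely because $p>r$) and only then apply Bernstein fiberwise in $x_1$. Your interpolation to the full range $\theta_2\in[j/m,1]$ is correct in principle, since the relation $1/p=j/2+(1/r-m/2)\theta_2+(1-\theta_2)/q$ is affine in $\theta_2$ and $L^p$-norms of a fixed function are log-convex in $1/p$, but that short computation should be recorded rather than asserted. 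Finally, the $k=3$ case is where most of the work lies in any proof of this type, and you only name the tools; a Littlewood--Paley decomposition of $u^{(3)}$ on $\Omega$, with Bernstein and Poincar\'e-type estimates at each dyadic scale and due attention to the discreteness of the transverse frequency variable, has to be carried out to turn the sketch into a proof.
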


\vspace{.2cm}

\textbf{Galilean transformation.} 
It is straightforward to verify that the compressible NS equations \cref{NS}, the Rankine-Hugoniot conditions \cref{RH} and the entropy condition \cref{entropy} are invariant under the Galilean transformation,
\begin{equation}\label{Galilean}
	(\rho^*, u_1^*, u_2^*, u_3^*)(x^*,t^*) = (\rho, u_1-a, u_2, u_3)(x_1^*+a t^*, x_2^*, x_3^*, t^*), \quad x^* \in \R^3, t^* \geq 0,
\end{equation}
where $ a\in\R $ is any constant. 
After the transformation, the shock wave $ (\rhob^s, \ub^s_1)(x_1,t) $ turns to a new one, $ (\rhob^{s*}, \ub_1^{s*})(x_1^*,t^*), $ which propagates along the $ x_1^* $-axis with the shock speed $ s^* := s - a $ and connects the shock states $ (\rhob_\pm^*, \ub_{1\pm}^*) := \big(\rhob_\pm,  \ub_{1\pm} - a \big) $ as $ x_1^* \to \pm \infty. $ By setting $ a = \frac{\ub_{1-} + \ub_{1+}}{2}, $ one can get that $ \ub_{1-}^* = - \ub_{1+}^*. $

Thus, we can assume without loss of generality that 
\begin{equation}\label{u-pm}
	\ub_{1-} = - \ub_{1+} > 0.
\end{equation}
This, together with \cref{RH}$ _1 $ and \cref{entropy}, implies that
\begin{equation}\label{small-u}
	\abso{\ub_{1-}} = \abso{\ub_{1+}} \lesssim \abso{\rhob_+ - \rhob_-} = \delta.
\end{equation}
Then it holds that
\begin{equation}\label{small-us}
	\abso{\ut^s_1(x_1)} \leq \abso{\ub_{1\pm}} \lesssim \delta \qquad \forall x_1 \in \R.
\end{equation}
Moreover, if $ \delta>0 $ is small, it follows from the entropy condition \cref{entropy} that
\begin{equation}\label{positive-s}
	s>\lambda_+(\rhob_+, \ub_{1+}) = \ub_{1+} + \sqrt{p'(\rhob_+)} \geq \frac{1}{2} \sqrt{p'(\rhob_+)} >0.
\end{equation}
Using Lemmas \ref{Lem-per} and \ref{Lem-shock} and \cref{small-us}, one has that
\begin{align}
	\sup_{ t>0} \norm{\nabla\rhot}_{W^{3,\infty}(\R^3)} + \sup_{ t>0} \norm{\uvt}_{W^{4,\infty}(\R^3)} &\lesssim \delta + \norm{(v_\pm, \wv_\pm)}_{W^{4,\infty}(\Torus^3)}  \notag \\
	& \lesssim \delta + \e. \label{small-tilde}
\end{align}
Also, it follows from \cref{coincide-XY} and Lemmas \ref{Lem-per}, \ref{Lem-shift} and \ref{Lem-shock} that the difference between the ansatz and the background viscous shock satisfies that
\begin{align}
	\norm{(\rhot, \mvt)-(\rhot^s,\mvt^s)}_{W^{4,\infty}(\R^3)} &\lesssim \norm{(v_\pm, \wv_\pm)}_{W^{4,\infty}(\Torus^3)}  \notag \\
	& \quad + \delta \big(\abso{X(t)-X_\infty} + \abso{Y(t)-X_\infty}\big)  \notag \\
	&  \lesssim \e e^{-ct}. \label{ansatz-shock}
\end{align}
Thus, it holds that
\begin{align}
	\norm{\uvt - \uvt^s}_{W^{4,\infty}(\R^3)} & \lesssim \norm{\rhot - \rhot^s}_{W^{4,\infty}(\R^3)} + \norm{ \mvt-\mvt^s}_{W^{4,\infty}(\R^3)} \notag \\
	& \lesssim \e e^{-ct}.  \label{ansatz-shock-1}
\end{align}
If $ \e > 0 $ is small, one can get that 
\begin{equation}\label{bdd-rhot}
	\frac{1}{2}\rhob^+ \leq \inf_{\substack{x\in \R^3 \\ t>0}} \rhot(x,t)  \leq \sup_{\substack{x\in \R^3 \\ t>0}} \rhot(x,t) \leq 2 \rhob^-.
\end{equation}


\vspace{0.3cm}

\section{Reformulated problem}\label{Sec-prob}

\vspace{.2cm}

Define the perturbations, 
\begin{equation}\label{def-pertur}
	 \phi := \rho-\rhot, \quad \psi = (\psi_1, \psi_2, \psi_3) := \mv - \tilde{\mv}.
\end{equation}
Then it follows from \cref{NS,ic,equ-ansatz,ic-ansatz} that
\begin{equation}\label{equ-phipsi}
	\begin{cases}
		\p_t \phi + \dv \psi = \dv \Fv_1 + f_2 := g_1, \\
		\p_t \psi + \dv \big( \frac{\mv\otimes\mv}{\rho} - \frac{\mvt\otimes\mvt}{\rhot} \big) + \nabla (p(\rho) - p(\rhot))  - \mu \lap \big( \frac{\mv}{\rho} - \frac{\mvt}{\rhot}\big) \\
		\quad  - (\mu+\lambda) \nabla \dv \big( \frac{\mv}{\rho} - \frac{\mvt}{\rhot}\big) = \sum\limits_{i=1}^3 \p_i \Fv_{3,i} + \fv_4 := \gv_2 = (g_{2,1},g_{2,2}, g_{2,3}),
	\end{cases}
\end{equation}
and 
\begin{equation}\label{ic-phipsi}
	\begin{aligned}
		(\phi, \psi)(x,0) = (\phi_0,\psi_0)(x) \in H^3(\Omega).
	\end{aligned}
\end{equation}
Here $ (\phi_0,\psi_0) $ is given by \cref{phi0psi0}.

Define the perturbation of velocity, $ \zeta = \uv-\uvt. $ The system of $ (\phi, \zeta) $ can be written as
\begin{equation}\label{equ-phizeta}
	\begin{cases}
		\p_t  \phi + \rho \dv \zeta + \uv \cdot \nabla \phi + \dv \uvt \phi + \nabla \rhot \cdot \zeta = g_1,  \\
		\rho \p_t \zeta + \rho \uv \cdot \nabla \zeta + \nabla \big(p(\rho) -  p(\rhot)\big)  + \rho \zeta \cdot \nabla \uvt + \phi (\p_t \uvt + \uvt\cdot \nabla \uvt) \\
		\qquad \qquad \qquad \quad - \mu \lap \zeta - (\mu+\lambda) \nabla \dv \zeta = \gv_2 - g_1 \uvt.
	\end{cases}
\end{equation}
And the initial data of $ (\phi,\zeta) $ is
\begin{equation}\label{ic-pertur}
	(\phi,\zeta)(x,0) = (\phi_0,\zeta_0)(x) \in H^3(\Omega),
\end{equation}
where 
\begin{align*}
	\zeta_0(x) := \frac{\mv_0(x)}{\rho_0(x)} - \frac{\mvt(x,0)}{\rhot(x,0)} = \frac{1}{\rho_0(x)} \big(\psi_0(x) - \uvt(x,0) \phi_0(x) \big).
\end{align*}

\vspace{.3cm}

For any $ T>0, $ we take the solution space for \cref{equ-phizeta} as follows,
\begin{align*}
	\mathbb{B}(0,T) := \big\{ & (\phi, \zeta):  (\phi, \zeta) \text{ is periodic in } \xp=(x_2,x_3) \in \Torus^2, \\
	& \qquad \quad (\phi, \zeta) \in C(0,T; H^3(\Omega)), \\ 
	&  \qquad \quad  \zeta_1 \in L^2\big(0,T; L^2(\Omega)\big), \\
	& \qquad \quad  (\phi, \nabla \zeta) \in L^2\big(0,T; H^3(\Omega)\big) \big\}.
\end{align*}

Then we show the following result.

\begin{Thm}\label{Thm-pertur}
	Under the assumptions of \cref{Thm}, there exist $ \delta_0>0 $, $ \gamma_0>0 $ and $ \nu_0>0 $ such that if
	\begin{align*}
		& \abso{\rhob^+ -\rhob^-} \leq \delta_0, \quad \norm{ v_0,\wv_0}_{H^6(\Torus^3)} \leq \gamma_0 \abso{\rhob^+ -\rhob^-}, \\
		& \quad \text{ and } \ \norm{ \phi_0, \psi_0}_{H^3(\Omega)} + \norm{ \Phi_0, \Psi_{01}}_{L^2(\R)} \leq \nu_0,
	\end{align*}
	then the Cauchy problem \cref{equ-phizeta}, \cref{ic-pertur} admits a unique solution $ (\phi, \zeta) \in \mathbb{B}(0,+\infty), $ satisfying that
	\begin{equation}\label{asymp}
		\norm{ (\phi, \zeta)}_{W^{1,\infty}(\Omega)} \to 0 \qquad \text{as } t\to+\infty.
	\end{equation}
	Moreover, the non-zero mode of $ (\phi,\zeta) $,
	\begin{align*}
		(\phi^\md, \zeta^\md)(x,t) := (\phi,\zeta)(x,t) - \int_{\Torus^2} (\phi,\zeta)(x_1,\xp,t) d\xp, 
	\end{align*}
	decays exponentially fast in time, i.e.
	\begin{equation}\label{exp-pert}
		\norm{(\phi^\md, \zeta^\md) }_{L^\infty(\Omega)} \leq C(\e+\nu) e^{-c_0 t} \qquad \forall t\geq 0,
	\end{equation}
	where $ C>0 $ and $ c_0>0 $ are some generic constants, independent of $ \delta, \e, \gamma_0, \nu $ and $ t $.
\end{Thm}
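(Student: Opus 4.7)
The plan is to prove Theorem~\ref{Thm-pertur} by a standard continuation argument: combine a local existence theorem in $\mathbb{B}(0,T_0)$ with uniform a priori $H^3(\Omega)$-estimates, then extract the asymptotic behaviour \eqref{asymp} from the integrability in time, and finally close an exponentially decaying bound for the non-zero mode separately. The local existence in the class $\mathbb{B}(0,T_0)$ is essentially a classical small-data, short-time argument for compressible Navier--Stokes linearised around the smooth ansatz $(\rhot,\uvt)$, so the real content is the a priori estimate. Under the bootstrap assumption $\sup_{[0,T]}\|(\phi,\zeta)\|_{H^3(\Omega)}\le \nu_0$, I aim to prove
\begin{equation*}
\sup_{0\le t\le T}\|(\phi,\zeta)(t)\|_{H^3(\Omega)}^2 + \int_0^T\!\Big(\|\phi\|_{H^3}^2 + \|\zeta_1\|_{L^2}^2+\|\nabla\zeta\|_{H^3}^2\Big)\,dt \lesssim (\e+\nu)^2,
\end{equation*}
with constant independent of $T$, and then let $T\to+\infty$.

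The central obstacle is the indefinite quadratic term
$\int \p_1\ut^s_1\,\phi^2$
produced by the shock compression in the basic $L^2$-energy identity; in one space dimension this is classically absorbed by the Goodman--Matsumura--Nishihara anti-derivative method, but in several dimensions that method is not directly available. Following the idea recalled in the introduction, I decompose
\begin{equation*}
\phi = \phi^{\od}(x_1,t)+\phi^{\md}(x,t),\quad \zeta=\zeta^{\od}(x_1,t)+\zeta^{\md}(x,t),\qquad \phi^{\od}:=\int_{\Torus^2}\phi\,d\xp,
\end{equation*}
and analogously for $\zeta$. For the \textbf{zero mode} I form the anti-derivatives $\Phi(x_1,t)=\int_{-\infty}^{x_1}\phi^{\od}\,dy_1$, $\Psi_1(x_1,t)=\int_{-\infty}^{x_1}\bar\psi_1\,dy_1$, which are well defined thanks to the zero-mass identity \eqref{zero-mass} guaranteed by the construction of the two shifts $X(t),Y(t)$ in Lemma~\ref{Lem-shift}. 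The resulting 1D parabolic-hyperbolic system for $(\Phi,\Psi_1)$ admits, after carefully extracting cancellations from the Eulerian nonlinear flux that rely on the smallness $|\tilde u_1^s|\lesssim \delta$ from \eqref{small-us}, the classical Kawashima-type energy identity producing the dissipation $\int (\ut_1^s)'\Phi^2$ which precisely absorbs the bad compression term in the zero-mode estimate of $\phi^{\od}$. For the \textbf{non-zero mode} $(\phi^{\md},\zeta^{\md})$, the key observation is that $\int_{\Torus^2}\phi^{\md}\,d\xp\equiv 0$, so Poincar\'e's inequality in $\xp$ yields $\|\phi^{\md}\|_{L^2}^2\lesssim \|\nabla_{\xp}\phi^{\md}\|_{L^2}^2$. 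Applying this directly at the basic $L^2$-energy level, the offending term $\int (\ut_1^s)'(\phi^{\md})^2$ is controlled by $\|\phi^{\md}\|_{L^2}^2$ up to the smallness factor $\delta$, and is absorbed by the natural dissipation $\mu\|\nabla\zeta^{\md}\|^2$ combined with Poincar\'e, provided $\delta,\e$ are small.

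I next propagate both estimates up to $H^3$: for the zero mode by differentiating in $x_1$ and iterating the anti-derivative identity, for the non-zero mode by standard commutator and elliptic-regularity arguments. Nonlinear interactions $\phi^{\od}\phi^{\md}$, $\zeta^{\od}\cdot\nabla\phi^{\md}$, etc., are handled via the Gagliardo--Nirenberg inequality adapted to $\Omega$ (Lemma~\ref{Lem-GN}); the external sources $\Fv_1,f_2,\Fv_{3,i},\fv_4$ in \eqref{F} carry the factor $\e e^{-ct}$ by Lemma~\ref{Lem-per} and Lemma~\ref{Lem-shift}, so they contribute a bounded-in-time forcing. Closing the bootstrap gives the uniform bound and thus global existence. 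The asymptotic behaviour \eqref{asymp} then follows from the integrability of $\|\phi,\nabla\zeta\|_{H^3}^2$, combined with $\p_t(\phi,\zeta)\in L^\infty_t L^2$ (read off from \eqref{equ-phizeta}) and interpolation with $H^3$ to reach $W^{1,\infty}$.

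For the exponential decay \eqref{exp-pert}, I will work only with the non-zero-mode equations. Because the mean-zero constraint in $\xp$ is preserved by the flow, Poincar\'e provides a genuine dissipative coercivity in $\phi^{\md}$ itself at every level of regularity up to $H^3$. Together with the exponential decay of $(v_\pm,\wv_\pm)$ from Lemma~\ref{Lem-per} and the already established smallness of the zero-mode, one obtains a Gr\"onwall inequality of the form $\frac{d}{dt}E^{\md}+cE^{\md}\lesssim (\e+\nu)e^{-ct}$ with $E^{\md}\sim\|(\phi^{\md},\zeta^{\md})\|_{H^3}^2$, which gives exponential decay in $H^3$ and thence in $L^\infty$ by Sobolev embedding. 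The hard part, in my view, is not any single inequality but the orchestration in the non-zero-mode $H^3$ estimate: one must first invoke Poincar\'e \emph{before} integration by parts ruins the sign of the transport-shock term, and simultaneously extract the Eulerian cancellations based on $|\tilde u_1^s|\lesssim\delta$ so that cross terms between $\phi^{\md}$ and the shock profile do not destroy the coercivity; the $H^3$ (rather than $H^2$) regularity is then needed precisely to control pointwise the zero-mode/non-zero-mode interaction terms that appear in the forcing of the Gr\"onwall inequality.
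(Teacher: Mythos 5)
Your plan follows the paper's own architecture: local existence plus $H^3$ a priori estimates, continuation to a global solution, integrability-in-time for the large-time limit \eqref{asymp}, and a dedicated non-zero-mode Gr\"onwall argument for \eqref{exp-pert}, with the zero mode handled through anti-derivatives and the non-zero mode through Poincar\'e in $\xp$. The main technical misstatement is in how the non-zero-mode compression term is absorbed. Poincar\'e gives $\|\phi^{\md}\|_{L^2}\lesssim\|\nabla_{\xp}\phi^{\md}\|_{L^2}$, but the Navier--Stokes dissipation is $\mu\|\nabla\zeta\|^2$; it provides no control on $\nabla\phi^{\md}$ whatsoever, so ``the natural dissipation $\mu\|\nabla\zeta^{\md}\|^2$ combined with Poincar\'e'' cannot absorb $\int(\ut_1^s)'|\phi^{\md}|^2$. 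The missing ingredient is the cross-term (or ``parabolic smoothing for the density'') estimate, obtained by pairing $\nabla\phi$ with the momentum equation, which produces the genuine $\|\nabla\phi\|_{L^2}^2$ dissipation; this is exactly what the paper's \cref{Lem-est5} does in the a priori estimate and \cref{Lem-exp-2} does in the exponential-decay argument, where the functional $\int\big(\tfrac{\mut}{2\rhot^s}|\nabla\phi^{\md}|^2+\psi^{\md}\cdot\nabla\phi^{\md}\big)$ is the essential quantity. Also, a smaller organizational difference: the paper does not run separate zero- and non-zero-mode energy identities for the full $H^3$ a priori estimate; it estimates the anti-derivative $(\Phi,\Psi_1)$ and the full $(\phi,\zeta)$ directly, bridging via the inequalities $\|\phi\|_{L^2}^2\lesssim\|\p_1\Phi\|_{L^2(\R)}^2+\|\nabla\phi\|_{L^2}^2$ and $\|\psi_1\|_{L^2}^2\lesssim\|\p_1\Psi_1\|_{L^2(\R)}^2+\|\nabla\psi_1\|_{L^2}^2$; the mode split with a genuine Gr\"onwall inequality is used only for the decay rate, and there the energy $E(t)$ is comparable to $\|(\phi^{\md},\psi^{\md})\|_{H^1}^2$ rather than to $\|(\phi^{\md},\zeta^{\md})\|_{H^3}^2$ as you assert, with the $L^\infty$ decay then recovered by interpolating against the (merely bounded) $H^3$ norm via the G--N inequality of \cref{Lem-GN}.
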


The proof of \cref{Thm-pertur} is based on an $ L^2 $-energy method and the anti-derivative technique. And the framework is feasible thanks to the following decomposition idea, which was initiated by the author in \cite{Yuan2021}.

\vspace{.2cm}

\textbf{Key decomposition.} 
For any $ f(x)\in L^\infty(\Omega) $ that is periodic in $ \xp=(x_2,x_3) \in \Torus^2, $ we use the notations, $ f^{\od} $ and $ f^{\md} $, to denote the one-dimensional zero mode,
\begin{equation}\label{od}
	f^{\od}(x_1) := \int_{\Torus^2} f(x_1,\xp) d \xp, \qquad x_1 \in \R,
\end{equation}
and multi-dimensional non-zero mode,
\begin{equation}\label{md}
	 f^{\md}(x) := f(x) - f^{\od}(x_1), \qquad x\in \R^3,
\end{equation}
respectively.


\vspace{.2cm}

Note that for any $ f\in L^\infty(\Omega) $ that is periodic in $ \xp=(x_2,x_3) \in \Torus^2, $ it holds that
\begin{equation}\label{zero-md}
	\int_{\Torus^2} f^\md(x_1,\xp) d\xp = 0 \qquad \forall x_1 \in\R.
\end{equation}
\begin{Lem}\label{Lem-od-md}
	For any $ p\in [1,+\infty], $ it holds that
	\begin{equation}\label{cauchy-ineq}
		\begin{aligned}
		\norm{f^\od}_{L^p(\R)} & \lesssim \norm{f}_{L^p(\Omega)}, \\
		\norm{f^\md}_{L^p(\Omega)} & \lesssim \norm{f}_{L^p(\Omega)} + \norm{f^\od}_{L^p(\R)} \lesssim \norm{f}_{L^p(\Omega)},
		\end{aligned}
	\end{equation}
	and 
	\begin{equation}\label{poincare}
		\norm{f^\md}_{L^p(\Omega)} \lesssim \norm{\nabla_{\xp} f^\md}_{L^p(\Omega)}.
	\end{equation}
\end{Lem}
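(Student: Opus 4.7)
The three bounds are, in order, a Jensen-type estimate, a triangle inequality, and a fiberwise Poincaré--Wirtinger inequality on $\Torus^2$; I sketch each in turn.

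For the first inequality in \cref{cauchy-ineq}, observe that $|\Torus^2|=1$, so Jensen's inequality (or a trivial essential-supremum bound in the case $p=\infty$) yields pointwise in $x_1$
\begin{equation*}
|f^\od(x_1)|^p = \Big|\int_{\Torus^2} f(x_1,\xp)\,d\xp\Big|^p \leq \int_{\Torus^2} |f(x_1,\xp)|^p\,d\xp.
\end{equation*}
Integrating in $x_1\in\R$ gives $\norm{f^\od}_{L^p(\R)} \lesssim \norm{f}_{L^p(\Omega)}$. Since $f^\od$ is independent of $\xp$ and $|\Torus^2|=1$, one has $\norm{f^\od}_{L^p(\Omega)}=\norm{f^\od}_{L^p(\R)}$, and the triangle inequality $\norm{f^\md}_{L^p(\Omega)} \leq \norm{f}_{L^p(\Omega)} + \norm{f^\od}_{L^p(\Omega)}$ combined with the first bound produces the second inequality in \cref{cauchy-ineq}.

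The substance of the lemma is \cref{poincare}. For a.e.\ $x_1\in\R$, the slice $\xp\mapsto f^\md(x_1,\xp)$ has vanishing average on $\Torus^2$ by \cref{zero-md}. The classical Poincaré--Wirtinger inequality on the flat torus therefore provides a constant $C_p$, depending only on $p$ and $\Torus^2$, such that
\begin{equation*}
\norm{f^\md(x_1,\cdot)}_{L^p(\Torus^2)} \leq C_p \norm{\nabla_{\xp} f^\md(x_1,\cdot)}_{L^p(\Torus^2)}.
\end{equation*}
Raising to the $p$-th power and integrating in $x_1$ (or taking the essential supremum when $p=\infty$) delivers \cref{poincare} at once.

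The one nontrivial ingredient is the Poincaré--Wirtinger inequality on $\Torus^2$ at arbitrary $p\in[1,\infty]$. The case $p=2$ is immediate from Fourier expansion, since the zero Fourier mode drops out. For $1\leq p<\infty$ the inequality is standard via a Rellich--Kondrachov compactness argument on the compact manifold $\Torus^2$. At the endpoint $p=\infty$, any continuous zero-mean function on $\Torus^2$ must vanish at some point, and integrating $|\nabla g|$ along a geodesic of length at most $\sqrt{2}/2$ from such a zero to any other point closes the estimate. This endpoint step is the only place where a little care is required; every other part of the argument is bookkeeping.
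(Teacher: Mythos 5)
Your argument is correct and follows the same route as the paper: Hölder/Jensen for the zero-mode bound, the triangle inequality for the non-zero mode, and a fiberwise Poincaré--Wirtinger inequality on the $\Torus^2$ cross-section integrated over $x_1\in\R$. The paper compresses these steps (invoking "Cauchy's inequality" for \cref{cauchy-ineq} and stating the sliced Poincaré bound without discussing the $p$-range), while you spell out the Jensen step and the endpoint $p=\infty$ case of Poincaré--Wirtinger; this is additional detail, not a different method.
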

\begin{proof}
	It is direct to use the Cauchy's inequality to prove \cref{cauchy-ineq}. And using \cref{zero-md} and the Poincar\'{e} inequality on $ \Torus^2, $ one has that
	\begin{align*}
		\int_{\Torus^2} \abso{f^\md}^p(x_1,\xp) d\xp \lesssim \int_{\Torus^2} \abso{\nabla_{\xp} f^\md}^p(x_1,\xp) d\xp \qquad \forall x_1 \in \R.
	\end{align*}
	Then \cref{poincare} follows immediately from the integration of the inequality on $ \R. $
\end{proof}

\vspace{.2cm}

By the zero-mass property \cref{zero-mass}, the perturbation associated with the density and the first component of momentum, $ (\phi,\psi_1) $, satisfies that
\begin{equation}\label{zero-od}
	\int_\R \big(\phi^{\od}, \psi_1^{\od}\big)(x_1,t) dx_1 = 0 \qquad \forall t\geq 0.
\end{equation}
Then the anti-derivative variable,
\begin{equation}\label{anti-de}
	(\Phi, \Psi_1)(x_1,t):= \int_{-\infty}^{x_1} \big(\phi^{\od}, \psi_1^{\od}\big)(y_1,t) dy_1, \quad x_1\in \R, t\geq 0,
\end{equation}
vanishes as $ \abso{x_1} \to +\infty $ for all $ t\geq 0. $
By integrating \cref{equ-phipsi,ic-phipsi} with respect to $ \xp $ over $ \Torus^2 $, we arrive at the system of $ (\phi^\od,\psi^\od) $,
\begin{equation}\label{equ-od}
	\begin{cases}
		\p_t \phi^\od + \p_1 \psi_1^\od = g_1^\od, \\
		\p_t \psi^\od + \p_1 \big[ \int_{\Torus^2} \big( \frac{m_1 \mv}{\rho} - \frac{\mt_1\mvt}{\rhot} \big) d\xp \big] + \p_1 \big[\int_{\Torus^2} \big(p(\rho) - p(\rhot) \big) d\xp \big] \E_1 \\
		\quad - \mu \p_1^2 \big[ \int_{\Torus^2} \big( \frac{\mv}{\rho} - \frac{\mvt}{\rhot} \big) d\xp \big] - (\mu+\lambda) \p_1^2 \big[ \int_{\Torus^2} \big( \frac{m_1}{\rho} - \frac{\mt_1}{\rhot} \big) d\xp \big] \E_1 = \gv_{2}^\od,
	\end{cases} \ x_1 \in \R, t>0, 
\end{equation}
and the initial data,
\begin{equation}\label{ic-od}
		(\phi^\od,\psi^\od)(x_1,0) = (\phi^\od_0, \psi^\od_0)(x_1) \in H^3(\R).
\end{equation}
Formally, by integrating the first two equations of \cref{equ-od} with respect to $ x_1 $ from $ -\infty $ to $ x_1 $, we can get that
\begin{equation}\label{equ-anti}
	\begin{cases}
		\p_t \Phi + \p_1 \Psi_1 = G_1, \\
		\p_t \Psi_1 + 2\ut^s_1 \p_1 \Psi_1 + \alpha \p_1 \Phi - \mut \p_1 \big[ \frac{1}{\rhot^s} \big( \p_1 \Psi_1 - \ut_1^s \p_1 \Phi \big) \big] \\
		\qquad \qquad \quad = G_2 + \int_{\Torus^2} (q_1+ q_2) d\xp + \p_1  \big[\int_{\Torus^2} (q_3+q_4) d\xp\big],
	\end{cases} \quad x_1 \in \R, t>0, 
\end{equation}
where 
\begin{equation}\label{alpha}
	\alpha  = p'(\rhot^s) - \abso{\ut_1^s}^2,
\end{equation}
\begin{equation}\label{G}
	\begin{aligned}
	G_1 & = \int_{-\infty}^{x_1} g_1^{\od}(y_1,t) dy_1 = F_{1,1}^{\od} + \int_{-\infty}^{x_1} f_2^{\od}(y_1,t) dy_1, \\
	G_2 & = \int_{-\infty}^{x_1} g_{2,1}^{\od}(y_1,t) dy_1 = F_{3,11}^{\od} + \int_{-\infty}^{x_1} f_{4,1}^{\od}(y_1,t) dy_1,
	\end{aligned}
\end{equation}
and
\begin{equation}\label{q}
	\begin{aligned}
		q_1 & = -2(\ut_1-\ut_1^s) \psi_1 + \big(\ut_1^2- \abso{\ut_1^s}^2\big) \phi - \big(p'(\rhot) - p'(\rhot^s) \big) \phi \\
		& = O(1) \e e^{-c t} (\abso{\phi} + \abso{\psi_1}), \\
		q_2 & = - \Big(\frac{m_1^2}{\rho} - \frac{\mt_1^2}{\rhot} - 2\ut_1 \psi_1 + \ut_1^2 \phi\Big) - \big(p(\rho)-p(\rhot) - p'(\rhot) \phi \big) \\
		& = O(1) (\abso{\phi}^2 + \abso{\psi_1}^2), \\
		q_3 & = \mut \Big[ \Big( \frac{1}{\rhot} - \frac{1}{\rhot^s} \Big) \psi_1 - \Big( \frac{\ut_1}{\rhot} - \frac{\ut_1^s}{\rhot^s}\Big) \phi \Big]  \\
		& = O(1) \e e^{-c t} (\abso{\phi} + \abso{\psi_1}), \\
		q_4 & = \mut  \Big(\frac{m_1}{\rho} - \frac{\mt_1}{\rhot} - \frac{\psi_1}{\rhot} + \frac{\ut_1}{\rhot} \phi\Big) \\
		& = O(1)(\abso{\phi}^2 + \abso{\psi_1}^2).
	\end{aligned} 
\end{equation}
Here we have used \cref{ansatz-shock,ansatz-shock-1}. And it follows from  \cref{ic-od} and the assumption 2) of \cref{Thm} that the initial data of $ (\Phi,\Psi_1) $,
\begin{equation}\label{ic-anti}
	(\Phi,\Psi_1)(x_1,0) = (\Phi_0, \Psi_{01})(x_1):= \int_{-\infty}^{x_1} (\phi^\od_0, \psi^\od_{01})(y_1) dy_1
\end{equation}
belongs to the $ H^4(\R) $ space.

\vspace{.3cm}

We show in the following lemma that the errors of the ansatz decay exponentially fast as $ t \to +\infty. $ The proof is similar to \cite{HY1shock} and we place it in \cref{App-g}.

\begin{Lem}\label{Lem-F}
	Under the assumptions of \cref{Thm}, there exist $ \delta_0>0 $ and $ \gamma_0>0 $ such that if $ \delta\leq \delta_0 $ and $ \e \leq \gamma_0 \delta $,
	then the errors of the ansatz in \cref{equ-phipsi,equ-anti} satisfy that
	\begin{equation}\label{est-g}
		\norm{g_1}_{H^3(\Omega)} + \norm{\gv_2}_{H^2(\Omega)} \lesssim \e \delta^{\frac{1}{2}} e^{-ct} \qquad \forall t>0,
	\end{equation}
	and 
	\begin{equation}\label{est-G}
		\norm{G_1}_{L^2(\R)} + \norm{ G_2}_{L^2(\R)} \lesssim \e \delta^{-\frac{1}{2}} e^{-ct} \lesssim \gamma_0 e^{-ct}  \qquad \forall t>0,
	\end{equation}
	respectively, where $ c>0 $ is the constant in \cref{exp-decay}.
\end{Lem}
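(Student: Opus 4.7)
\textbf{Proposed proof strategy for \cref{Lem-F}.}
The plan is to bound each term in \cref{F} and in \cref{G} factor by factor, exploiting three ingredients: the exponential-in-$t$ decay of the periodic background $(v_\pm,\wv_\pm)$ and of $X',Y'$, $X-X_\infty$, $Y-Y_\infty$ (from Lemmas~\ref{Lem-per} and \ref{Lem-shift}); the sharp pointwise estimates $|\eta'(x_1)|\sim\delta e^{-c\delta|x_1|}$, $|\eta''(x_1)|\lesssim\delta|\eta'(x_1)|$ (a consequence of \cref{sig-eta} and \cref{Lem-shock}); and the Rankine--Hugoniot relations \cref{RH} together with the ODEs \cref{ode-XY} defining $X,Y$. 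The basic scaling that will drive everything is $\|\p^k\eta\|_{L^2(\R)}^2\sim\delta^{2k-1}$ for $k\ge1$, while $\|\eta'\|_{L^1(\R)}\sim1$.

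First, for $\Fv_1$: write $\eta_{st+X}-\eta_{st+Y}=-\int_Y^X\eta'(x_1-st-\alpha)d\alpha$, so that $\|\eta_{st+X}-\eta_{st+Y}\|_{L^2(\R)}\lesssim |X-Y|\|\eta'\|_{L^2}\lesssim \e\delta^{-1/2}e^{-ct}$, and differentiated versions gain factors of $\delta$ from $\|\eta^{(k)}\|_{L^2}$. Since $\|\mv_+-\mv_-\|_{W^{4,\infty}}\lesssim\delta+\e$ by \cref{small-u,decay-vw}, this controls $\Fv_1$ and its derivatives in $H^3(\Omega)$ by $\e\delta^{1/2}e^{-ct}$ after using $\e\le\gamma_0\delta$. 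For $f_2$ we insert \cref{RH}$_1$ to produce the cancellation $\jump{\rho}s-\jump{m_1}=0$, which reduces the bracket in \cref{F} to $\jump{\rho}X'+(v_+-v_-)(s+X')-(w_{+,1}-w_{-,1})=O(\e e^{-ct})$; combined with $\|\p^k\eta'_{st+X}\|_{L^2}\lesssim\delta^{k+1/2}$ and $\e\le\gamma_0\delta$, this gives $\|f_2\|_{H^3(\Omega)}\lesssim\e\delta^{1/2}e^{-ct}$. The terms $\Fv_{3,i}$ are treated analogously: each combination of the form $a_-(1-\eta)+a_+\eta-\tilde a$ vanishes identically in $\xp$ when $(v_\pm,\wv_\pm)=0$ (since then $\tilde a$ interpolates between $a_\pm$ along $\eta$), so we may replace $\eta_{st+Y}$ by $\eta_{st+X}$ and use interpolation to extract factors of $|X-Y|$ or $\e e^{-ct}$; the two cases give bounds with the same scaling. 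For $\fv_4$ the RH identity \cref{RH}$_2$ removes the leading part of the bracket, leaving $\jump{m_1}Y'+O(\e e^{-ct})$, and the estimate parallels that for $f_2$.

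For the anti-derivative bounds \cref{est-G}, the critical new structural fact is that the ODEs \cref{ode-XY} were designed precisely so that $\int_\Omega f_2\,dx=\int_\Omega f_{4,1}\,dx=0$ for all $t>0$. Combined with the fact that $F_{1,1}$ and $F_{3,11}$ vanish as $|x_1|\to\infty$, this yields $\int_{\R}f_2^{\od}(y_1,t)\,dy_1=\int_{\R}f_{4,1}^{\od}(y_1,t)\,dy_1=0$, so that the anti-derivative pieces of $G_1,G_2$ may be written as $-\int_{x_1}^{+\infty}f_2^{\od}(y_1,t)\,dy_1$ when convenient. The pointwise integrand is $O(\e e^{-ct})$ times $\eta'$, which has effective support in a strip of width $O(1/\delta)$ around $st+X$; integrating the exponential tail yields the pointwise estimate $\bigl|\int_{-\infty}^{x_1}f_2^{\od}dy_1\bigr|\lesssim\e e^{-ct}\min\{e^{-c\delta|x_1-st-X|},1\}$, and squaring and integrating in $x_1$ produces exactly the factor $\delta^{-1}$ that gives $\|G_1\|_{L^2(\R)}\lesssim\e\delta^{-1/2}e^{-ct}$. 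The contribution from $F_{1,1}^{\od}$ is directly $\lesssim\|\mv_+-\mv_-\|_{L^\infty}\|\eta_{st+X}-\eta_{st+Y}\|_{L^2}\lesssim(\delta+\e)\cdot\e\delta^{-1/2}e^{-ct}$, which is of the required order, and the argument for $G_2$ is identical after using \cref{RH}$_2$.

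\textbf{Main obstacle.} The bookkeeping for $\Fv_{3,i}$ and $\fv_4$ is the most delicate step, because each expression is a sum of many nonlinear combinations of $(\rho_\pm,\uv_\pm,\rhot,\uvt)$ whose leading behavior cancels only after simultaneously invoking $\eta_{st+X}=\eta_{st+Y}+O(|X-Y|)$, the RH relations, and the bounds $\|\uvt-\uvt^s\|_{W^{4,\infty}}\lesssim\e e^{-ct}$ from \cref{ansatz-shock-1}. The anticipated strategy is to systematically decompose each such term into a ``shock-only'' part (which vanishes by the construction of $\eta$) and a ``periodic-remainder'' part multiplied by factors of $\eta$, $\eta'$, and then apply the $L^2$-scaling of the shock profile. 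Once this decomposition is carried out uniformly, the Sobolev estimates follow by differentiating and distributing $\delta$-factors between $\eta^{(k)}$ and the smooth periodic factors, using $\e\le\gamma_0\delta$ to absorb residual $\e/\delta$ ratios.
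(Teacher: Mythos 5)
Your overall strategy — exploiting the $\delta$-scaling $\|\eta^{(k)}\|_{L^2}\sim\delta^{k-1/2}$, the exponential decay of $(v_\pm,\wv_\pm)$, $X'$, $Y'$, $X-X_\infty$, $Y-Y_\infty$, the Rankine--Hugoniot relations, and the zero-mass of $f_2, f_{4,1}$ — does match the paper's Appendix~B in spirit, and the treatment of $\Fv_1$, $f_2$, and the anti-derivative $\int_{-\infty}^{x_1}f_2^{\od}$ is sound. (Whether you work with $\eta_{st+X}-\eta_{st+Y}=-\int_Y^X\eta'$ or substitute both by $\etat=\eta_{st+X_\infty}$ modulo an $O(\e\delta^{-1}e^{-ct})$ shift, as the paper does, is cosmetic.)

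However, there is a genuine gap in the mechanism you invoke for $\Fv_{3,i}$ (and hence for the $\gv_2$ and $G_2$ bounds). You claim that ``each combination of the form $a_-(1-\eta)+a_+\eta-\tilde a$ vanishes identically when $(v_\pm,\wv_\pm)=0$, since then $\tilde a$ interpolates between $a_\pm$ along $\eta$.'' This is false for the nonlinear terms. The ansatz $\rhot, \mvt$ interpolate linearly by construction (that is \cref{sig-eta}), but $\uvt=\mvt/\rhot$, $p(\rhot)$, and $\ut_i\mvt$ do \emph{not}. For instance, the shock part of the pressure bracket,
\begin{equation*}
	p(\rhob_-)(1-\etat)+p(\rhob_+)\etat-p\big(\rhob_-(1-\etat)+\rhob_+\etat\big)
	= \tfrac{1}{2}p''(\rhob_-)\jump{\rho}^2\,\etat(1-\etat)+O(\delta^3),
\end{equation*}
is $O(\delta^2)\etat(1-\etat)$, whose $L^2(\R)$-norm is $O(\delta^{3/2})$ — not time-decaying and far larger than the target $\e\delta^{\pm1/2}e^{-ct}$. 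The same is true of the flux bracket $\ub_{1-}\mb_{1-}(1-\etat)+\ub_{1+}\mb_{1+}\etat-\ut_1^s\mt_1^s$. So carrying out your plan literally, you would be stuck at these individual terms.

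What actually makes $F_{3,11}$ and $\p_1\Fv_{3,1}$ small is a cancellation across \emph{all three lines} of $\Fv_{3,1}$ — momentum flux, pressure, and viscous stress — through the shock profile ODE \cref{ode-shock}$_2$, not the structure of $\eta$ alone. Integrating \cref{ode-shock}$_2$ from $-\infty$ gives $\ut_1^s\mt_1^s+p(\rhot^s)-\mut(\ut_1^s)' = \ub_{1-}\mb_{1-}+p(\rhob_-)+s\jump{m_1}\etat$, so $b_3 = a_{3,-}+(a_{3,+}-a_{3,-})\etat$ precisely because \cref{RH}$_2$ gives $s\jump{m_1}=\jump{u_1m_1}+\jump{p}$, and therefore $a_{3,-}(1-\etat)+a_{3,+}\etat-b_3\equiv0$. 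This is the cancellation the paper records (its displayed identity $(a_{3,-}-b_3)+(a_{3,+}-a_{3,-}+a_4)\etat=0$). For $\gv_2$ the point is similar: the shock-only part of $\Fv_{3,i}$ for $i=2,3$ is \emph{not} zero (the pressure and $\dv\uvt$ parts survive), but these are pure functions of $x_1$ and are annihilated by $\p_i$; and for $i=1$ the differentiated profile ODE plus \cref{RH}$_2$ give $\big(\jump{u_1m_1}+\jump{p}\big)\etat'-(\ut_1^s\mt_1^s)'-(p(\rhot^s))'+\mut(\ut_1^s)''=0$. Likewise $\fv_4$'s shock-only coefficient is $s\jump{m_1}-\jump{u_1m_1}-\jump{p}=0$. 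You should replace the claimed linear-interpolation mechanism by these ODE/RH identities; once that is done, your bookkeeping of $\delta$- and $\e$-powers goes through as you anticipated.
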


\vspace{.3cm}

Then we show the a priori estimates and the local existence theorem of the problem \cref{equ-phizeta}, \cref{ic-pertur}.

\begin{Prop}[A priori estimates]\label{Prop-apriori}
	Under the assumptions of \cref{Thm-pertur}, for any fixed $ T>0, $ assume that $ ( \phi, \zeta) \in \mathbb{B}(0,T) $ solves the problem \cref{equ-phizeta}, \cref{ic-pertur}, and the anti-derivative variable,
	\begin{equation}
		(\Phi, \Psi_1)(x_1,t) = \int_{-\infty}^{x_1} \int_{\Torus^2} (\phi, \psi_1)(y_1,\xp,t) d\xp dy_1, \quad x_1 \in \R, t>0,
	\end{equation}
	exists and belongs to the $ C(0,T; H^4(\R)) $ space, where $ \psi_1= m_1- \mt_1 = (\rhot+\phi)(\ut_1 + \zeta_1) - \rhot \ut_1 $.
	Then there exist $  \delta_0>0, \gamma_0>0 $ and $ \nu_0>0 $ such that, if 
	\begin{equation*}
		\delta \leq \delta_0, \quad \e \leq \gamma_0 \abso{\rhob^+ -\rhob^-},
	\end{equation*}
	and
	\begin{equation}\label{apriori}
		\sup\limits_{t\in(0,T)} \big( \norm{\Phi,\Psi_1}_{L^2(\R)} + \norm{ \phi,\zeta}_{H^3(\Omega)} \big) \leq \nu_0,
	\end{equation}
	then it holds that
	\begin{equation}\label{est-apriori}
		\begin{aligned}
			& \sup_{t\in(0,T)} \big( \norm{\Phi,\Psi_1}^2_{L^2(\R)} + \norm{ \phi,\zeta}_{H^3(\Omega)}^2 \big) \\
			& \qquad \quad + \int_0^T \Big( \norm{\abso{(\ut_1^s)'}^{\frac{1}{2}} \Psi_1}_{L^2(\R)}^2 + \norm{\zeta_1}_{L^2(\Omega)}^2 + \norm{\phi, \nabla\zeta}_{H^3(\Omega)}^2 \Big) dt \\
			& \qquad \lesssim \norm{\Phi_0, \Psi_{01}}_{L^2(\R)}^2 + \norm{ \phi_0, \zeta_0 }_{H^3(\Omega)}^2 + \e.
		\end{aligned}
	\end{equation}
\end{Prop}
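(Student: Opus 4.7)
I would establish \cref{est-apriori} in three stages corresponding to different norms, tied together by the zero-mode/non-zero-mode decomposition $f=f^\od+f^\md$ from \cref{od,md}. Stage~1 uses the 1D anti-derivative system \cref{equ-anti} to obtain the $\|\Phi,\Psi_1\|_{L^2}^2$ bound together with the shock-layer dissipation $\int_0^T\|\,|(\tilde u^s_1)'|^{1/2}\Psi_1\|_{L^2}^2 dt$; Stage~2 runs a basic $L^2$ estimate on \cref{equ-phizeta}, splitting $(\phi,\zeta)$ into its zero and non-zero modes; Stage~3 upgrades to $H^3$ by differentiating and carefully pairing the continuity and momentum equations to recover the $L^2_t(H^3_x)$ integrability of $\phi$.

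\textbf{Stage 1.} I would multiply \cref{equ-anti}$_1$ by $\alpha\Phi$ (with $\alpha$ as in \cref{alpha}) and \cref{equ-anti}$_2$ by $\Psi_1$, then integrate over $\R$. The convective term $2\tilde u^s_1\partial_1\Psi_1$ produces, after integration by parts, $-(\tilde u^s_1)'\Psi_1^2=|(\tilde u^s_1)'|\Psi_1^2$, which gives the desired dissipation because $(\tilde u^s_1)'<0$ by \cref{Lem-shock}. The pressure-type cross terms between the two equations cancel modulo $\alpha'\Phi\Psi_1$, which is bounded by $|(\tilde u^s_1)'|\Psi_1^2$ and a small multiple of $\|\Phi\|_{L^2}^2$ absorbed via a weighted Poincar\'e-type argument. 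The viscous term yields $\int\frac{\tilde\mu}{\tilde\rho^s}|\partial_1\Psi_1|^2$, with the off-diagonal piece $\tilde u^s_1\partial_1\Phi$ controlled by $\delta$-smallness from \cref{small-us}. The source terms $G_1,G_2$ are exponentially small by \cref{est-G}, and the quadratic remainders $q_1,\ldots,q_4$ in \cref{q} are estimated by Sobolev embedding plus the a priori bound \cref{apriori}.

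\textbf{Stage 2.} For the non-zero mode $(\phi^\md,\zeta^\md)$ I would run a direct $L^2$ estimate on \cref{equ-phizeta}. The apparent obstruction is the shock-compression contribution $\sim\int \partial_1\tilde u^s_1\,|(\phi^\md,\zeta^\md)|^2 dx$, whose magnitude is bounded by $\delta^2\|(\phi^\md,\zeta^\md)\|_{L^2}^2$ (using \cref{Lem-shock}). The transversal Poincar\'e inequality \cref{poincare} converts this into $\delta^2\|\nabla_{\xp}(\phi^\md,\zeta^\md)\|_{L^2}^2$, which is absorbed by the viscous dissipation once $\delta$ is small. For the zero mode, the information is already encoded via $(\phi^\od,\psi_1^\od)=(\partial_1\Phi,\partial_1\Psi_1)$, and the remaining components $\zeta_2^\od,\zeta_3^\od$ satisfy 1D parabolic equations in which the convective derivative involves only the transverse velocity components and hence does not interact with the shock compression. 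Combining, this closes the pure $L^2$ bound for $(\phi,\zeta)$ and yields $\|\zeta_1\|_{L^2_{t,x}}$ from the momentum dissipation.

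\textbf{Stage 3 and the main obstacle.} Differentiating \cref{equ-phizeta} $k$ times for $k=1,2,3$, I would multiply by $\nabla^k\phi$ and $\nabla^k\zeta$ respectively. The top-order viscous piece yields $\|\nabla^{k+1}\zeta\|_{L^2}^2$, while the shock-compression terms split again into a non-zero-mode part bounded by Poincar\'e and $\delta$-smallness, and a zero-mode part controlled by the lower-order norms from Stages~1--2 and \cref{ansatz-shock,ansatz-shock-1}. Nonlinear interactions are treated using \cref{Lem-GN} together with \cref{small-tilde,bdd-rhot} and the a priori assumption \cref{apriori}, and the source terms are exponentially small by \cref{est-g}. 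The principal difficulty appears at the $H^1$ level: since there is no direct $L^2$ dissipation for $\phi$, I must recover $\phi\in L^2_t H^3_x$ by pairing the continuity equation (after multiplication by $p'(\tilde\rho)\nabla^k\phi/\tilde\rho$) against a spatial derivative of the momentum equation, so that the pressure gradient $\nabla(p(\rho)-p(\tilde\rho))$ produces the needed quadratic term in $\nabla^k\phi$. In Eulerian coordinates this pairing generates cross terms involving $\tilde u^s_1\nabla^{k+1}\phi$ that would a priori spoil the estimate; exploiting the Galilean normalization \cref{u-pm}--\cref{small-u}, which makes $|\tilde u^s_1|\lesssim\delta$, these dangerous terms carry a small prefactor and can be absorbed, thereby closing the a priori estimate \cref{est-apriori}.
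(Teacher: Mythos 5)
Your three-stage outline matches the paper's proof in all essential respects: Stage~1 corresponds to Lemmas~5.1--5.3 (weighted $L^2$ estimate of $(\Phi,\Psi_1)$ via \cref{equ-anti}), Stage~2 to Lemma~5.4 (basic $L^2$ estimate for $(\phi,\zeta)$ with the transversal Poincar\'{e} inequality controlling the compression), and Stage~3 to Lemmas~5.5--5.9 (the $H^k$ ladder, pairing the continuity and momentum equations to recover $\phi$ dissipation), with the Galilean normalization \cref{u-pm}--\cref{small-us} used exactly as you describe to render the Eulerian cross terms $\delta$-small. Two details diverge from the paper and one is slightly misstated. First, the paper multiplies \cref{equ-anti}$_1$ by $\Phi$ and \cref{equ-anti}$_2$ by $\Psi_1/\alpha$, under which the pressure cross terms combine to the exact derivative $\partial_1(\Phi\Psi_1)$ and vanish upon integration; your choice of $\alpha\Phi$ and $\Psi_1$ instead leaves an $\alpha'\Phi\Psi_1$ remainder that requires the extra weighted-Poincar\'{e} absorption you mention, so the paper's multipliers are cleaner. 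Second, the paper does not separate the basic $L^2$ estimate into zero- and non-zero-mode parts; it runs a relative-entropy estimate for the full $(\phi,\zeta)$ (via $\Xi(\rho,\rhot)$) and only invokes the mode decomposition a posteriori, in the bounds \cref{key1,key2}, to close the $\|\phi,\zeta_1\|_{L^2}^2$ terms. Finally, your Stage~2 claim that $\zeta_2^\od,\zeta_3^\od$ satisfy 1D parabolic equations whose ``convective derivative involves only the transverse velocity components'' is not accurate — the zero mode of $\uv\cdot\nabla\zeta_{2,3}$ does involve $u_1\approx\tilde u^s_1$. The load-bearing observation is rather that the compression term $\rho\,\zeta\cdot\nabla\uvt\cdot\zeta$ reduces, modulo $O(\e e^{-ct})$, to $\rho\,(\tilde u^s_1)'|\zeta_1|^2$ because $\nabla\uvt\approx(\tilde u^s_1)'\E_1\otimes\E_1$ (see \cref{star}), so the shock compression only ever couples to $\zeta_1$, which is what permits the closure via \cref{key1,key2} and \cref{bdd-z1}. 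You should also note that the anti-derivative estimate \cref{est1} leaves $(\delta+\nu)\int\|\partial_1\Phi\|^2_{L^2}$ on the right, which requires the separate $\|\partial_1\Phi\|_{L^2}$ estimate (Lemma~5.2) before Stage~1 can be considered closed; this bridge lemma is implicit in your sketch but worth making explicit.
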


\begin{Thm}[Local existence]\label{Thm-local}
	Under the assumptions of \cref{Thm-pertur}, there exist $ T_0>0 $, $ \delta_0>0 $, $ \gamma_0>0 $ and $ \nu_0>0 $ such that if
	\begin{align*}
		& \delta \leq \delta_0, \quad \e \leq \gamma_0 \abso{\rhob^+ -\rhob^-}\ \text{ and } \ \norm{ \phi_0, \psi_0}_{H^3(\Omega)} + \norm{ \Phi_0, \Psi_{01}}_{L^2(\R)} \leq \nu_0,
	\end{align*}
	then the problem \cref{equ-phizeta}, \cref{ic-pertur} admits a unique solution $ (\phi, \zeta) \in \mathbb{B}(0,T_0). $ Moreover, the anti-derivative,
	\begin{align*}
		(\Phi,\Psi_1)(x_1,t) := \int_{-\infty}^{x_1} \int_{\Torus^2} (\phi,\psi_1)(x,t) d\xp, \qquad x_1 \in\R, t>0,
	\end{align*}
	exists and belongs to the $ C(0,T_0; H^4(\R)) $ space, where $ \psi_1 = m_1- \mt_1 = (\rhot+\phi) (\ut_1 + \zeta_1) - \rhot \ut_1 $.
\end{Thm}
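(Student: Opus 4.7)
The plan is a standard iteration argument for the coupled hyperbolic-parabolic system \cref{equ-phizeta}, supplemented by a separate treatment of the anti-derivative variable that uses the zero-mass structure built into the ansatz. Set $(\phi^0, \zeta^0) := (\phi_0, \zeta_0)$ and define $(\phi^{n+1}, \zeta^{n+1})$ iteratively by freezing the nonlinear coefficients $\rho = \rhot + \phi$ and $\uv = \uvt + \zeta$ in \cref{equ-phizeta} at the previous iterate. Since \cref{bdd-rhot} keeps $\rho^n := \rhot + \phi^n$ uniformly positive under the smallness assumptions, the $\phi$-equation at each step is a linear transport equation and the $\zeta$-equation is a linear strictly parabolic system with smooth coefficients on the domain $\Omega = \R \times \Torus^2$ (periodic in $\xp$), so existence at each step is classical.

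Next I would establish a uniform bound of the form
\begin{equation*}
\sup_{t \in [0,T_0]} \norm{(\phi^{n+1}, \zeta^{n+1})}_{H^3(\Omega)}^2 + \int_0^{T_0} \Big( \norm{\zeta_1^{n+1}}_{L^2(\Omega)}^2 + \norm{\phi^{n+1}, \nabla \zeta^{n+1}}_{H^3(\Omega)}^2 \Big)\, dt \leq M,
\end{equation*}
on a short interval $[0, T_0]$ independent of $n$, by $H^3$ energy estimates exploiting the parabolic dissipation from $-\mu \Delta \zeta - (\mu+\lambda) \nabla \dv \zeta$, the background coefficient bounds \cref{small-tilde}, and the source control from \cref{Lem-F}. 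Cauchy-ness in the lower-order space $L^\infty_t L^2_x \cap L^2_t H^1_x$ follows from a linear estimate on the differences $(\phi^{n+1} - \phi^n, \zeta^{n+1} - \zeta^n)$ after shrinking $T_0$; interpolating with the uniform $H^3$ bound produces a limit in $\mathbb{B}(0, T_0)$ which solves \cref{equ-phizeta}, \cref{ic-pertur}. Uniqueness follows by the same linear energy argument applied to the difference of two solutions.

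The subtler point, and the main obstacle, is verifying that the anti-derivative $(\Phi, \Psi_1) \in C(0, T_0; H^4(\R))$ exists. The key observation is that the ansatz is designed, via \cref{ode-XY,XY-0}, precisely so that the zero-mass identity \cref{zero-mass} holds for all $t$, which at the level of zero modes gives \cref{zero-od}. Hence $\Phi(x_1, t) := \int_{-\infty}^{x_1} \phi^\od(y_1, t)\, dy_1$ is well-defined and vanishes as $\abso{x_1} \to +\infty$, and similarly for $\Psi_1$. A direct $L^2(\R)$ energy estimate on the anti-derivative system \cref{equ-anti}, using \cref{est-G} for $G_1, G_2$, the quadratic control on $q_1, \dots, q_4$ in \cref{q} (absorbed by the already-established $H^3$-bound on $(\phi, \zeta)$ via Sobolev embedding), and the initial datum $\norm{\Phi_0, \Psi_{01}}_{L^2(\R)} \leq \nu_0$, yields $(\Phi, \Psi_1) \in C(0, T_0; L^2(\R))$. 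Combining this with the bound $(\phi^\od, \psi_1^\od) \in C(0, T_0; H^3(\R))$ from \cref{cauchy-ineq} and the identity $\partial_1^k \Phi = \partial_1^{k-1} \phi^\od$ for $k \geq 1$ upgrades the regularity to $H^4(\R)$. To guarantee that the zero-mass property holds throughout the iteration (not only in the limit), one linearizes \cref{equ-anti} for $(\Phi^{n+1}, \Psi_1^{n+1})$ at the zero-mode level and recovers the full iterate via the decomposition \cref{od}, \cref{md}; the resulting compatibility is preserved by construction.
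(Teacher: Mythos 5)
Your iteration scheme for $(\phi,\zeta)\in\mathbb B(0,T_0)$ is a reasonable route and is essentially what the paper outsources to the reference \cite{MN1980}. The genuine gap is in the anti-derivative part, and it is a logical one rather than a technical one: you argue that since the ansatz is built so that the zero-mass identity \cref{zero-od} holds, $\Phi(x_1,t):=\int_{-\infty}^{x_1}\phi^\od(y_1,t)\,dy_1$ is ``hence well-defined.'' That gets the implication backwards. The statement $\int_\R\phi^\od(\cdot,t)\,dy_1=0$ for $t>0$ already presupposes the integrability of $\phi^\od(\cdot,t)$ on $\R$, which is exactly what has not yet been shown: the local solution only gives $\phi^\od(\cdot,t)\in H^3(\R)\subset L^2(\R)$, and membership in $L^2(\R)$ does not imply that $\int_{-\infty}^{x_1}\phi^\od\,dy_1$ converges. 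The ``direct $L^2(\R)$ energy estimate on \cref{equ-anti}'' you then propose is an a priori bound on an object whose existence is still open; such an estimate cannot bootstrap existence. Your final remark about ``recovering the full iterate'' via the zero/non-zero-mode decomposition does not resolve this, because each iterate $(\Phi^n,\Psi_1^n)$ faces the same issue.

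The paper's proof addresses exactly this point, and by a different device: rather than deriving the anti-derivative from $(\phi^\od,\psi_1^\od)$, it \emph{constructs} it. The zero-mode equations \cref{equ-od} are rewritten as the linear system \cref{equ-linear}, and then an auxiliary Cauchy problem \cref{equ-eta} for an unknown $\hat\Psi_1$ is posed, consisting of a linear parabolic equation with source $J+G_2\in C(0,T_0;H^2(\R))$ and initial datum $\Psi_{01}\in H^4(\R)$. Classical parabolic theory produces $\hat\Psi_1\in C(0,T_0;H^3(\R))$. One then observes that $\p_1\hat\Psi_1$ and $\psi_1^\od$ satisfy the same linear parabolic Cauchy problem, so by uniqueness $\p_1\hat\Psi_1\equiv\psi_1^\od$; since $\hat\Psi_1\in H^3(\R)$ decays at $\pm\infty$, it is precisely the anti-derivative $\Psi_1$. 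A parallel ODE argument \cref{equ-Phi} handles $\Phi$. This construction is what delivers the integrability and decay of the anti-derivative for $t>0$; the zero-mass property is then a consequence of $\hat\Psi_1(\pm\infty,t)=0$, not its cause. To complete your sketch you would need to insert a step of this type (or, alternatively, propagate an $L^1(\R)$ or weighted bound on $\phi^\od,\psi_1^\od$ through the iteration), and in either case the auxiliary-equation comparison is the cleaner path.
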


\cref{Prop-apriori,Thm-local} are proved in \cref{Sec-esti,Sec-local}, respectively. In \cref{Sec-stability}, we first prove the time-asymptotic behaviors, \cref{asymp,exp-pert}, then we can finish the proof of Theorems \ref{Thm-pertur} and \ref{Thm}.

\vspace{.3cm}

\section{A priori estimates}\label{Sec-esti}

This section is devoted to the proof of \cref{Prop-apriori}. 

\vspace{.2cm}

Recall the notations $ \delta=\abso{\rhob_+-\rhob_-} $ and $ \e = \norm{(v_0, \wv_0)}_{H^6(\Torus^3)} $. In addition, we denote
\begin{equation}\label{nu}
	\nu := \sup\limits_{t\in(0,T)} \big( \norm{\Phi,\Psi_1}_{L^2(\R)} + \norm{ \phi,\zeta}_{H^3(\Omega)} \big).
\end{equation}
It follows from \cref{Lem-GN} that
\begin{align}
	\sup_{t\in(0,T)} \norm{ \phi,\zeta}_{W^{1,\infty}(\Omega)} 
	& \lesssim \sup_{t\in(0,T)} \Big\{ \norm{\nabla( \phi,\zeta)}^{\frac{1}{2}}_{H^1(\Omega)} \norm{ \phi,\zeta}^{\frac{1}{2}}_{H^1(\Omega)} + \norm{\nabla^2 ( \phi,\zeta)}_{H^1(\Omega)}^{\frac{1}{2}} \norm{( \phi,\zeta)}_{H^1(\Omega)}^{\frac{1}{2}} \notag \\
	& \qquad\qquad\quad + \norm{\nabla^2 (\phi,\zeta)}^{\frac{3}{4}}_{H^1(\Omega)} \norm{ \phi,\zeta}^{\frac{1}{4}}_{H^1(\Omega)} \Big\} \notag \\
	& \lesssim \sup_{t\in(0,T)} \norm{ \phi,\zeta}_{H^3(\Omega)} \notag \\
	&  \lesssim \nu. \label{small-phizeta}
\end{align}
And the perturbation of momentum, $ \psi = \rho \zeta + \phi \uvt $, satisfies that
\begin{equation*}
	\sup_{t\in(0,T)} \norm{\psi}_{W^{1,\infty}(\Omega)} \lesssim \sup_{t\in(0,T)} \norm{\psi}_{H^3(\Omega)} \lesssim \sup_{t\in(0,T)} \norm{ \phi,\zeta}_{H^3(\Omega)} \lesssim \nu.
\end{equation*}
Combining the Sobolev inequality and \cref{Lem-od-md}, one can prove that 
\begin{align}
	\sup_{t\in(0,T)} \norm{\Phi,\Psi_1}_{W^{3, \infty}(\R)} & \lesssim \sup_{t\in(0,T)} \norm{\Phi,\Psi_1}_{H^4(\R)} \notag \\
	& \lesssim \sup_{t\in(0,T)} \big( \norm{\Phi,\Psi_1}_{L^2(\R)} + \norm{\phi^{\od},\psi_1^{\od}}_{H^3(\R)} \big) \notag \\
	& \lesssim \sup_{t\in(0,T)} \big( \norm{\Phi,\Psi_1}_{L^2(\R)} + \norm{\phi,\psi_1}_{H^3(\Omega)} \big) \notag \\
	& \lesssim \nu. \label{small-PhiPsi}
\end{align}
By \cref{bdd-rhot,small-tilde,small-phizeta}, if $ \delta>0, $ $ \gamma_0>0 $ and $ \nu >0 $ are small, then
\begin{equation}\label{small-uv}
	  \sup_{t\in(0,T)} \norm{\nabla\rho(x,t)}_{L^\infty(\Omega)} + \sup_{t\in(0,T)} \norm{\uv(x,t)}_{W^{1,\infty}(\Omega)} \lesssim \delta + \e + \nu.
\end{equation}
and 
\begin{align*}
	\frac{1}{4}\rhob^+ \leq \inf_{\substack{x\in\Omega \\ t\in(0,T)}} \rho(x,t)  \leq \sup_{\substack{x\in\Omega \\ t\in(0,T)}} \rho(x,t) \leq 4 \rhob^-.
\end{align*}
Moreover, if $ \delta >0 $ is small, then by \cref{small-us}, the term \cref{alpha} satisfies that
\begin{equation}\label{posit-alpha}
	\alpha = p'(\rhot^s) - \abso{\ut_1^s}^2 \geq \frac{1}{2} p'(\rhot^s) >0 \qquad \forall x\in\Omega, t\geq 0.
\end{equation}

\begin{Lem}\label{Lem-est1}
	Under the assumptions of \cref{Prop-apriori}, there exist $ \delta_0>0, \gamma_0>0 $ and $ \nu_0>0 $ such that if $ \delta \leq \delta_0 $, $ \e \leq \gamma_0 \delta $ and $ \nu \leq \nu_0, $ then
	\begin{equation}\label{est1}
		\begin{aligned}
			& \sup_{t\in(0,T)} \norm{\Phi,\Psi_1}^2_{L^2(\R)} + \int_0^T \Big(\norm{\abso{(\ut_1^s)'}^{\frac{1}{2}} \Psi_1}_{L^2(\R)}^2 + \norm{\p_1 \Psi_1}_{L^2(\R)}^2 \Big)  dt \\
			& \qquad  \lesssim \norm{\Phi_0,\Psi_{01}}_{L^2(\R)}^2 + (\delta + \nu) \int_0^T \norm{\p_1 \Phi}_{L^2(\R)}^2 dt + \nu \int_0^T \norm{\nabla \phi, \nabla \psi_1}_{L^2(\Omega)}^2 dt + \e.
		\end{aligned}
	\end{equation}
\end{Lem}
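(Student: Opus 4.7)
The plan is to perform a symmetric $L^2$-energy estimate on the antiderivative system \cref{equ-anti}, testing the first equation against $\alpha\Phi$ and the second against $\Psi_1$, then summing and integrating over $\R\times(0,T)$. The coupling rearranges as $\alpha\Phi\p_1\Psi_1+\alpha\Psi_1\p_1\Phi=\p_1(\alpha\Phi\Psi_1)-(\p_1\alpha)\Phi\Psi_1$, contributing only a lower-order residue. The convective term $2\ut_1^s\p_1\Psi_1$ yields, after integration by parts and using the Lax condition $(\ut_1^s)'<0$ from \cref{Lem-shock}, the key shock-supported dissipation $\int|(\ut_1^s)'|\Psi_1^2\,dx_1$. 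The viscous term contributes $\mut\int(\p_1\Psi_1)^2/\rhot^s\,dx_1$ together with a cross-term $-\mut\int(\ut_1^s/\rhot^s)\p_1\Phi\p_1\Psi_1$, which by Young together with $|\ut_1^s|\lesssim\delta$ (see \cref{small-us}) is absorbed into the $(\p_1\Psi_1)^2$ dissipation while producing only a harmless $O(\delta^2)\int\|\p_1\Phi\|_{L^2}^2$ piece.

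The main work is controlling two shock-supported error terms of the form $\int|(\ut_1^s)'|\Phi^2$: one arising from $-\tfrac12(\p_t\alpha)\Phi^2=\tfrac{s}{2}(\p_1\alpha)\Phi^2$ (using $\p_t\alpha=-s\,\p_1\alpha$ because $\alpha$ depends only on $x_1-st-X_\infty$), and one from $(\p_1\alpha)\Phi\Psi_1$ after Cauchy--Schwarz, the $\Psi_1$-part being absorbed into the dissipation. Both exploit the bound $|\p_1\alpha|\lesssim|(\ut_1^s)'|$, which follows from $\alpha=p'(\rhot^s)-(\ut_1^s)^2$, \cref{Lem-shock}, and $|\ut_1^s|\lesssim\delta$. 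Combining $\|\Phi\|_{L^\infty(\R)}^2\leq 2\|\Phi\|_{L^2(\R)}\|\p_1\Phi\|_{L^2(\R)}$ with $\int_\R|(\ut_1^s)'|\,dx_1=|\jump{u_1}|\lesssim\delta$ gives the Hardy-type bound
\begin{equation*}
\int_\R|(\ut_1^s)'|\Phi^2\,dx_1 \lesssim \delta\,\|\Phi\|_{L^2(\R)}\|\p_1\Phi\|_{L^2(\R)},
\end{equation*}
which, after Young's inequality, supplies exactly the $\delta\int_0^T\|\p_1\Phi\|_{L^2}^2\,dt$ piece inside the $(\delta+\nu)\int\|\p_1\Phi\|^2$ term on the right-hand side of \cref{est1}.

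The forcing on the right side of \cref{equ-anti} is handled using \cref{Lem-F} and the a priori bound \cref{apriori}. The linear sources $G_1,G_2$ are estimated via $\int_0^T\!\int(G_1\alpha\Phi+G_2\Psi_1)\leq\epsilon\sup_t\|\Phi,\Psi_1\|_{L^2}^2+C\|G_1,G_2\|_{L^1_tL^2_x}^2$ together with $\|G_1,G_2\|_{L^1_tL^2_x}^2\lesssim\e^2\delta^{-1}\lesssim\gamma_0\e$ (coming from \cref{est-G} and $\e\leq\gamma_0\delta$), the $\epsilon\sup$ being absorbed into the left side. The small-interaction terms $q_1,q_3$ from \cref{q} carry a factor $\e e^{-ct}$ and contribute $\lesssim\e\nu$; the quadratic nonlinearities $q_2,q_4$, after shifting any outer $\p_1$ onto $\Psi_1$ by integration by parts (which absorbs a small $\epsilon\int(\p_1\Psi_1)^2$ into the dissipation), are estimated via the Gagliardo--Nirenberg inequality on $\Omega$ (\cref{Lem-GN}) against the a priori smallness $\nu\leq\nu_0$, producing exactly the $\nu\int_0^T\|\nabla\phi,\nabla\psi_1\|^2\,dt$ contribution appearing in \cref{est1}.

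The main obstacle, and the step requiring the most care, is closing the estimate uniformly in $T$ in spite of the residual $\delta\int_0^T\|\Phi\|_{L^2}^2\,dt$ that the Hardy splitting also produces, since no dissipation of $\|\Phi\|_{L^2}$ is directly available from \cref{equ-anti}. The plan is to re-absorb this residual into $\sup_{t\in(0,T)}\|\Phi,\Psi_1\|_{L^2}^2$ on the left by a Gronwall-type closing-off argument that exploits the smallness of $\delta$, $\gamma_0$ and $\nu_0$ together with the built-in exponential-in-$t$ decay of the linear forcing $G_1,G_2$ and of the small-interaction pieces $q_1,q_3$.
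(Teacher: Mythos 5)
Your choice of test functions is not the one that makes the argument close, and this is where the proposal breaks down. The paper multiplies \cref{equ-anti}$_1$ by $\Phi$ and \cref{equ-anti}$_2$ by $\Psi_1/\alpha$. With this pairing the cross coupling is \emph{exactly} a total derivative, $\Phi\,\p_1\Psi_1 + \Psi_1\,\p_1\Phi = \p_1(\Phi\Psi_1)$, and the only shock-derivative-weighted term generated is $\beta\Psi_1^2$ with $\beta = -\p_t(1/(2\alpha)) - \p_1(\ut_1^s/\alpha) \gtrsim |(\ut_1^s)'|$; no $|(\ut_1^s)'|\Phi^2$ term ever appears. You instead test with $\alpha\Phi$ and $\Psi_1$. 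That leaves a residual $(\p_1\alpha)\Phi\Psi_1$ and, from the time-derivative, a $\tfrac{s}{2}(\p_1\alpha)\Phi^2$ term — both of the form $\int |(\ut_1^s)'|\Phi^2$ that you then have to estimate from scratch.

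This is not a cosmetic difference; it is the genuine gap. Your Hardy-type bound $\int_\R |(\ut_1^s)'|\Phi^2 \lesssim \delta\,\|\Phi\|_{L^2}\|\p_1\Phi\|_{L^2}$, after Young's inequality, leaves a piece $\delta\int_0^T\|\Phi\|_{L^2(\R)}^2\,dt$ which is not on the right-hand side of \cref{est1} and is not controlled by any available dissipation. You acknowledge the problem, but the proposed "Gronwall-type closing-off" cannot repair it: the coefficient $\delta$ of this term does not decay in $t$ (it comes from the steady shock profile, not from the exponentially-decaying forcings $G_1,G_2,q_1,q_3$), so Gronwall only yields $\sup_t\|\Phi\|_{L^2}^2 \lesssim C_0\,e^{C\delta T}$, which blows up as $T\to\infty$ for any fixed $\delta>0$. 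Smallness of $\delta,\gamma_0,\nu_0$ slows the exponential but does not give a uniform-in-$T$ bound. There is simply no source of $L^2$-dissipation for $\Phi$ at this stage of the hierarchy (that first appears in \cref{Lem-est2} as $\int\|\p_1\Phi\|^2$, and $\|\Phi\|_{L^2}$ itself is never dissipated anywhere), so the extra term cannot be absorbed.

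The fix is to use the paper's multipliers. With $(\Phi,\,\Psi_1/\alpha)$, the only shock-weighted quadratic term is $\beta\Psi_1^2$, which is a \emph{good} term (it is precisely the $\int|(\ut_1^s)'|\Psi_1^2$ dissipation on the left of \cref{est1}), and the remaining $I_1,\dots,I_4$ are handled by elementary Cauchy–Schwarz with $\sup|(\ut_1^s)'|^{1/2}\lesssim\delta$, the exponential decay of $G_1,G_2$ via \cref{Lem-F}, and the Poincaré-type reductions \cref{key1}–\cref{key2}; no Hardy inequality is needed and no uncontrolled $\|\Phi\|_{L^2}^2$ integral arises. The rest of your plan — the treatment of the convective term, the viscous cross term absorbed via $|\ut_1^s|\lesssim\delta$, and the splitting of $q_1,q_3$ (exponentially small) versus $q_2,q_4$ (quadratic, estimated against $\nu\int\|\nabla\phi,\nabla\psi_1\|^2$) — matches the paper and is sound, provided you first eliminate the $\Phi^2$ residuals by the correct choice of weights.
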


\begin{proof}
	Multiplying $ \Phi $ on \cref{equ-anti}$ _1 $ and $ \frac{\Psi_1}{\alpha} $ on \cref{equ-anti}$ _2, $ respectively, and summing the resulting equations up, one can get that
	\begin{align}
		& \p_t \Big( \frac{\Phi^2}{2} + \frac{\Psi_1^2}{2\alpha} \Big) + \beta \Psi_1^2 + \frac{\mut}{\alpha \rhot^s} \abso{\p_1 \Psi_1}^2 = \p_1 (\cdots) + \underbrace{\frac{\mut \p_1 \alpha}{\alpha^2 \rhot^s} \Psi_1 (\p_1 \Psi_1-\ut_1^s \p_1 \Phi)}_{I_1} + \underbrace{\frac{\mut \ut_1^s}{\alpha \rhot^s } \p_1 \Phi \p_1 \Psi_1}_{I_2} \notag \\
		& \qquad + \underbrace{\Phi G_1 + \frac{\Psi_1}{\alpha}  G_2}_{I_3} + \underbrace{\frac{\Psi_1 }{\alpha} \int_{\Torus^2} (q_1+q_2) d\xp - \p_1 \Big(\frac{\Psi_1}{\alpha}\Big) \int_{\Torus^2} (q_3+q_4) d\xp}_{I_4}, \label{eq1}
	\end{align} 
	where 
	\begin{equation}\label{beta}
		\beta = - \p_t \Big(\frac{1}{2\alpha} \Big) - \p_1 \Big( \frac{\ut_1^s}{\alpha} \Big)
	\end{equation}
	and $ (\cdots) = - \Phi \Psi_1 - \frac{\ut_1^s}{\alpha} \Psi_1^2 + \frac{\mut}{\alpha \rhot^s} \Psi_1 (\p_1 \Psi_1 - \ut_1^s \p_1 \Phi)  + \frac{\Psi_1}{\alpha} \int_{\Torus^2} (q_3 + q_4) d\xp. $
	
	By  \cref{ode-shock}$ _1 $, \cref{alpha} and Lemmas \ref{Lem-per}, \ref{Lem-shift} and \ref{Lem-shock}, one can verify that 
	\begin{align*}
		\beta & = \frac{1}{2\alpha^2} \big( \p_t \alpha + 2 \ut_1^s \p_1 \alpha - 2 (\ut_1^s)' \alpha \big) \\
		& = - \frac{1}{2\alpha^2} \Big( \frac{s \rhot^s }{s-\ut_1^s}p''(\rhot^s) + 2 p'(\rhot^s) \Big) (\ut_1^s)' + O(1) \abso{\ut_1^s} \abso{(\ut^s_1)'}.
	\end{align*}
	Thus, if $ \delta>0 $ is small, by \cref{small-us,positive-s} and the fact that $ (\ut_1^s)'<0 $, one has that
	\begin{equation}\label{posit-beta}
		\beta \gtrsim \abso{(\ut^s_1)'} \qquad \forall x\in\Omega, t\geq 0. 
	\end{equation}	
	We now estimate each $ I_i $ from $ i=1 $ to $ 4 $.
	It follows from \cref{Lem-shock,small-us} that
	\begin{align}
		\norm{I_1}_{L^1(\R)} & \lesssim \int_\R \abso{(\ut^s_1)'} \abso{\Psi_1} (\abso{\p_1 \Psi_1}+\abso{\p_1 \Phi}) dx_1 \notag \\
		& \lesssim \delta  \norm{ \abso{(\ut^s_1)'}^{\frac{1}{2}} \Psi_1 }_{L^2(\R)}^2 + \delta \norm{\p_1 \Phi, \p_1 \Psi_1}_{L^2(\R)}^2, \label{I-1}
	\end{align}
	and
	\begin{align}
		\norm{I_2}_{L^1(\R)} \lesssim \delta \norm{\p_1 \Phi, \p_1 \Psi_1}_{L^2(\R)}^2. \label{I-2}
	\end{align}
	Using \cref{est-G} with the assumption that $ \e \leq \gamma_0 \delta $, one has that
	\begin{align}
		\norm{I_3}_{L^1(\R)} & \lesssim \e \delta^{-1} e^{-ct} \sup_{t\in(0,T)} \norm{\Phi, \Psi_1}_{L^2(\R)}^2 + \e e^{-ct} \notag \\
		& \lesssim \gamma_0 e^{-ct} \sup_{t\in(0,T)} \norm{\Phi, \Psi_1}_{L^2(\R)}^2 + \e e^{-ct}. \label{I-3}
	\end{align}
	And using \cref{q,small-PhiPsi,nu}, the term $ I_4 $ satisfies that
	\begin{align}
		\norm{I_4}_{L^1(\R)} & \lesssim \norm{\Psi_1}_{W^{1,\infty}(\R)} \big( \e e^{-ct} \norm{\phi,\psi_1}_{L^2(\Omega)} + \norm{\phi, \psi_1}_{L^2(\Omega)}^2 \big) \notag \\
		& \lesssim \e \nu^2 e^{-ct} + \nu \norm{\phi, \psi_1}_{L^2(\Omega)}^2. \label{I-4}
 	\end{align} 
 	By \cref{zero-md}, the non-zero mode of $ (\phi,\psi_1) $ satisfies that
 	\begin{align*}
 		\int_{\Torus^2} (\phi^\md, \psi_1^\md) d\xp = 0 \qquad \forall x_1 \in \R, t\geq 0.
 	\end{align*}
 	Then it follows from \cref{poincare} that
 	\begin{align}
 	\norm{\phi}_{L^2(\Omega)}^2 & = \norm{\phi^{\od}}_{L^2(\R)}^2 + \norm{\phi^{\md}}_{L^2(\Omega)}^2 \notag \\
 	& \lesssim \norm{\phi^{\od}}_{L^2(\R)}^2 + \norm{\nabla_{\xp}\phi^{\md}}_{L^2(\Omega)}^2 \notag \\
 	& \lesssim \norm{\p_1 \Phi}_{L^2(\R)}^2 + \norm{\nabla \phi}_{L^2(\Omega)}^2, \label{key1}
 	\end{align}
 	where the fact that $ \nabla_{\xp} \phi^{\md} = \nabla_{\xp} \phi $ is used. And it is similar to prove that
 	\begin{equation}\label{key2}
 	\norm{\psi_1}_{L^2(\Omega)}^2 \lesssim \norm{\p_1 \Psi_1}_{L^2(\R)}^2 + \norm{\nabla \psi_1}_{L^2(\Omega)}^2.
 	\end{equation}
 
	By integrating \cref{eq1} over $ \R\times (0,T) $ and collecting \cref{posit-alpha,posit-beta,I-1,I-2,I-3,I-4,key1,key2}, one can obtain \cref{est1} directly.
\end{proof}

\vspace{.2cm}

\begin{Lem}\label{Lem-est2}
	Under the assumptions of \cref{Prop-apriori}, there exist $ \delta_0>0, \gamma_0>0 $ and $ \nu_0>0 $ such that if $ \delta \leq \delta_0 $, $ \e \leq \gamma_0 \delta $ and $ \nu \leq \nu_0, $ then
	\begin{equation}\label{est2}
		\begin{aligned}
			& \sup_{t\in(0,T)} \norm{\p_1 \Phi}_{L^2(\R)}^2  + \int_0^T \norm{\p_1 \Phi}^2_{L^2(\R)} dt \\
			& \qquad \lesssim \norm{\Phi_0,\Psi_{01}}_{L^2(\R)}^2 + \norm{\phi_0}_{L^2(\Omega)}^2 + \nu \int_0^T \norm{\nabla \phi, \nabla \psi_1}^2_{L^2(\Omega)} dt + \e.
		\end{aligned}
	\end{equation}
\end{Lem}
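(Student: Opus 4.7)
I would multiply the second equation of \cref{equ-anti} by $\p_1 \Phi$ and integrate over $\R \times (0,T)$. The positive term $\alpha |\p_1 \Phi|^2$ (with $\alpha \gtrsim p'(\rhot^s)$ by \cref{posit-alpha}) produces the time-integrated $\int_0^T \norm{\p_1 \Phi}_{L^2(\R)}^2 dt$ on the LHS; the $\sup_{t\in(0,T)}$ control is the missing ingredient, and I plan to extract it from the viscous term after using \cref{equ-anti}$ _1 $ to eliminate $\p_1 \Psi_1$.

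Specifically, substituting $\p_1 \Psi_1 = -\p_t \Phi + G_1$ inside the viscous flux yields
\begin{equation*}
-\mut \p_1\Big[\tfrac{1}{\rhot^s}(\p_1 \Psi_1 - \ut_1^s \p_1 \Phi)\Big] = \tfrac{\mut}{\rhot^s}\p_t \p_1 \Phi + (\text{terms carrying a factor of } \ut_1^s,\ (\ut_1^s)',\ \p_1(1/\rhot^s)\ \text{or } G_1).
\end{equation*}
Tested against $\p_1 \Phi$ and integrated in $x_1$, the leading piece becomes the total time derivative $\tfrac{d}{dt}\int_\R \tfrac{\mut}{2\rhot^s} |\p_1 \Phi|^2 dx_1$. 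Its evaluation at $t=T$ supplies the desired $\sup_{t\in(0,T)}$ contribution, while at $t=0$ it is bounded via \cref{cauchy-ineq} by $\norm{\phi_0^\od}_{L^2(\R)}^2 \leq \norm{\phi_0}_{L^2(\Omega)}^2$. The remaining viscous contributions carry a power of $\ut_1^s$, $(\ut_1^s)'$ or $\p_1(1/\rhot^s)$, each of size $O(\delta)$ or $O(\delta^2)$ by \cref{Lem-shock} and \cref{small-us}, and after Cauchy--Schwarz are either absorbed into the LHS or estimated as $O(\e)$ via \cref{Lem-F}.

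For the remaining cross term $\int_0^T\!\!\int_\R \p_t \Psi_1 \p_1 \Phi dx_1 dt$, I would integrate by parts in $t$, substitute $\p_t \p_1 \Phi = -\p_1^2 \Psi_1 + \p_1 G_1$ from \cref{equ-anti}$ _1 $, and integrate once more by parts in $x_1$. This produces a term $-\int_0^T \norm{\p_1 \Psi_1}_{L^2(\R)}^2 dt$, controlled via \cref{Lem-est1} at the price of the tolerated loss $(\delta+\nu)\int_0^T \norm{\p_1 \Phi}_{L^2}^2 dt + \nu \int_0^T \norm{\nabla \phi, \nabla \psi_1}_{L^2(\Omega)}^2 dt$; boundary terms $[\int_\R \Psi_1 \p_1 \Phi dx_1]_0^T$, the $t=T$ one absorbed by Young's inequality into half of the $\sup$ contribution on the LHS and the $t=0$ one producing $\norm{\Psi_{01}}_{L^2(\R)}^2 + \norm{\phi_0}_{L^2(\Omega)}^2$; and a source $\int_0^T\int_\R \Psi_1 \p_1 G_1 dx_1 dt$ of size $O(\e)$ via \cref{Lem-F}. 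The convective cross term $2 \int \ut_1^s \p_1 \Psi_1 \p_1 \Phi dx_1 dt$ is $O(\delta)(\norm{\p_1 \Psi_1}^2 + \norm{\p_1 \Phi}^2)$ and absorbable.

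The nonlinear sources $\int_{\Torus^2}(q_1+q_2) d\xp$ and $\p_1 \int_{\Torus^2}(q_3+q_4) d\xp$ tested against $\p_1 \Phi$ are bounded using the pointwise estimates \cref{q}, the $L^\infty$-smallness \cref{small-phizeta,small-PhiPsi} and the decomposition identities \cref{key1,key2}, yielding contributions of the form $\e e^{-ct}$-quantities plus $\nu$ times $\norm{\p_1 \Phi, \p_1 \Psi_1}_{L^2(\R)}^2 + \norm{\nabla \phi, \nabla \psi_1}_{L^2(\Omega)}^2$, every piece of which either matches the RHS of the target inequality, is absorbed into the LHS, or is controlled via \cref{Lem-est1}. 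I expect the hard part to be the bookkeeping of the viscous manipulation: the $\tfrac{d}{dt}\int \tfrac{\mut}{2\rhot^s} |\p_1 \Phi|^2 dx_1$ contribution must be kept with the correct sign, and every remaining product of $\p_1 \Phi$ with a derivative of $(\Phi,\Psi_1)$ must carry a power of $\delta$, $\e$ or $\nu$ sufficient to avoid devouring the hard-won positive terms on the LHS.
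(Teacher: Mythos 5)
Your proposal matches the paper's proof in all essentials: testing \cref{equ-anti}$ _2 $ against $\p_1\Phi$, using \cref{equ-anti}$ _1 $ to convert both the $\p_t\Psi_1\,\p_1\Phi$ cross term and the viscous flux into the energy functional $\p_t\big(\tfrac{\mut}{2\rhot^s}|\p_1\Phi|^2 + \Psi_1\p_1\Phi\big)$ plus a $+|\p_1\Psi_1|^2$ loss, the dissipative term $\alpha|\p_1\Phi|^2$ with $\alpha\gtrsim p'(\rhot^s)$, absorption of $O(\delta)$-weighted cross terms, control of the $\norm{\p_1\Psi_1}^2$ loss via \cref{Lem-est1}, and bounding the $q_i$ sources via \cref{key1,key2}. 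The bookkeeping you flagged as the hard part is carried out in the paper exactly as you describe.
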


\begin{proof}
	It follows from \cref{equ-anti}$ _1 $ that
	\begin{align*}
		\p_t\Psi_1 \p_1 \Phi = \p_t (\Psi_1 \p_1 \Phi) + \p_1 (\Psi_1 \p_1 \Psi_1) - \Psi_1 \p_1 G_1 - \abso{\p_1 \Psi_1}^2,
	\end{align*} 
	and
	\begin{align*}
		& - \mut \p_1 \Big[\frac{1}{\rhot^s} \big(\p_1 \Psi_1 - \ut_1^s \p_1 \Phi\big) \Big] \p_1 \Phi \\
		& \qquad = \p_t \Big( \frac{\mut}{2\rhot^s} \abso{\p_1 \Phi}^2 \Big) - \frac{\mut}{\rhot^s} \p_1 G_1 \p_1 \Phi + \frac{\mut \p_1 \rhot^s}{\abso{\rhot^s}^2} \p_1 \Psi_1 \p_1 \Phi \\
		& \qquad \quad - \frac{\mut}{2} \Big[ \p_t \Big( \frac{1}{ \rhot^s}\Big) - \p_1 \Big(\frac{ \ut_1^s}{\rhot^s} \Big) \Big] \abso{\p_1 \Phi}^2  + \p_1 \Big(\frac{\mut \ut_1^s}{2\rhot^s} \abso{\p_1 \Phi}^2 \Big).
	\end{align*}
	Then by multiplying $ \p_1 \Phi $ on \cref{equ-anti}$ _2 $, one can get that
	\begin{equation}\label{eq2}
		\begin{aligned}
		& \p_t \Big( \frac{\mut}{2\rhot^s} \abso{\p_1 \Phi}^2 + \Psi_1\p_1 \Phi \Big)  + \alpha \abso{\p_1 \Phi}^2 = \p_1 (\cdots) \\
		& \qquad \underbrace{ - \Big(2\ut_1^s + \frac{\mut \p_1\rhot^s}{\abso{\rhot^s}^2}\Big) \p_1 \Phi \p_1 \Psi_1 + \frac{\mut}{2} \Big[ \p_t \Big( \frac{1}{ \rhot^s}\Big) - \p_1 \Big(\frac{ \ut_1^s}{\rhot^s} \Big) \Big] \abso{\p_1 \Phi}^2  + \abso{\p_1 \Psi_1}^2}_{I_5} \\
		& \qquad \underbrace{ - \Phi \p_1 G_2 + \Big(\Psi_1 + \frac{\mut}{\rhot^s} \p_1 \Phi \Big) \p_1 G_1 }_{I_6} \\
		& \qquad \underbrace{+ \p_1 \Phi \int_{\Torus^2} (q_1+q_2) d\xp - \p_1^2 \Phi \int_{\Torus^2} (q_3+q_4) d\xp}_{I_7}, 
	\end{aligned}
	\end{equation}
	where $ (\cdots) = -\Psi_1 \p_1 \Psi_1 + \Phi G_2 - \frac{\mut \ut_1^s}{2\rhot^s} \abso{\p_1 \Phi}^2 + \p_1 \Phi \int_{\Torus^2} (q_3+q_4) d\xp. $ 
	
	By \cref{Lem-shock,small-us}, one has that 
	\begin{equation}\label{I-5}
		\norm{I_5}_{L^1(\R)} \lesssim \delta \norm{\p_1 \Phi}_{L^2(\R)}^2 + \norm{\p_1 \Psi_1}_{L^2(\R)}^2.
	\end{equation}
	It follows from \cref{Lem-F} that
	\begin{align}
		\norm{I_6}_{L^1(\R)} & \lesssim \e \delta^{\frac{1}{2}} e^{-ct} (\norm{\Phi}_{H^1(\R)} + \norm{\Psi_1}_{L^2(\R)}) \notag \\
		& \lesssim \e e^{-ct} \sup_{t\in(0,T)} \Big(\norm{\Phi}_{H^1(\R)}^2 + \norm{\Psi_1}_{L^2(\R)}^2 \Big) + \e e^{-ct}. \label{I-6}
	\end{align}
	And similar to \cref{I-4}, one can use \cref{small-PhiPsi,key1,key2,nu} to verify that
	\begin{align}
		\norm{I_7}_{L^1(\R)} & \lesssim \norm{\p_1 \Phi}_{W^{1,\infty}(\R)} \big( \e e^{-ct} \norm{\phi,\psi_1}_{L^2(\Omega)} + \norm{\phi, \psi_1}_{L^2(\Omega)}^2 \big) \notag \\
		& \lesssim \e e^{-ct} + \nu \norm{\p_1\Phi, \p_1 \Psi_1}_{L^2(\R)}^2 + \nu \norm{\nabla \phi, \nabla \psi_1}_{L^2(\Omega)}^2. \label{I-7}
	\end{align}	
	
	Note that $ \Phi_0' = \phi_0^{\od} $, then it follows from \cref{Lem-od-md} that $ \norm{\Phi_0'}_{L^2(\R)} \lesssim \norm{\phi_0}_{L^2(\Omega)}. $
	Collecting \cref{I-5,I-6,I-7} and using \cref{Lem-est1}, one can integrate \cref{eq2} over $ \R \times (0,T) $ to obtain \cref{est2}.
	
\end{proof}

\vspace{.2cm}

\begin{Lem}\label{Lem-est3}
	Under the assumptions of \cref{Prop-apriori}, there exist $ \delta_0>0 $ and $ \gamma_0>0 $ such that if $ \delta\leq \delta_0 $ and $ \e \leq \gamma_0 \delta $, then
	\begin{equation}\label{bdd-nab-psi}
		\norm{\nabla \psi_1}_{L^2(\Omega)} \lesssim  \norm{\p_1 \Phi, \p_1 \Psi_1}_{L^2(\R)} + \norm{\nabla \zeta_1}_{L^2(\Omega)}  + (\delta + \e + \nu)  \norm{\nabla\phi}_{L^2(\Omega)},
	\end{equation}
	and
	\begin{equation}\label{bdd-z1}
		\begin{aligned}
			\norm{\zeta_1}_{L^2(\Omega)} \lesssim  \norm{\p_1 \Phi,\p_1 \Psi_1}_{L^2(\R)} + \norm{\nabla \zeta_1}_{L^2(\Omega)} + (\delta + \e + \nu)  \norm{\nabla\phi}_{L^2(\Omega)}.
		\end{aligned}
	\end{equation}
\end{Lem}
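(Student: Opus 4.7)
\medskip

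\noindent\textbf{Proof proposal for \cref{Lem-est3}.}

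The starting point is the algebraic identity between the momentum perturbation $\psi_1$ and the velocity perturbation $\zeta_1$. Since $\psi_1=m_1-\mt_1=(\rhot+\phi)(\ut_1+\zeta_1)-\rhot\ut_1$, expanding gives
\begin{equation*}
\psi_1=\rho\zeta_1+\ut_1\phi,\qquad\text{equivalently}\qquad \zeta_1=\frac{\psi_1-\ut_1\phi}{\rho}.
\end{equation*}
My plan is to use the second identity to estimate $\zeta_1$ in $L^2$ by $\psi_1$ and $\phi$, to use the first identity (after differentiation) to estimate $\nabla\psi_1$ by $\nabla\zeta_1$, $\zeta_1$, $\nabla\phi$ and $\phi$, and then to close the loop by a small-constant absorption argument once both bounds are placed side by side.

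First I would handle the bookkeeping on $\phi$ and $\psi_1$ at the $L^2$ level. By the zero-mass property~\cref{zero-od}, the anti-derivatives satisfy $\p_1\Phi=\phi^{\od}$ and $\p_1\Psi_1=\psi_1^{\od}$, so the zero-mode contributions are controlled by $\|\p_1\Phi\|_{L^2(\R)}$ and $\|\p_1\Psi_1\|_{L^2(\R)}$. For the non-zero-mode contributions I would apply the Poincar\'e inequality~\cref{poincare} on $\Torus^2$, using $\nabla_{\xp}\phi^{\md}=\nabla_{\xp}\phi$ and the same for $\psi_1$, which yields (exactly as in~\cref{key1,key2})
\begin{equation*}
\|\phi\|_{L^2(\Omega)}\lesssim \|\p_1\Phi\|_{L^2(\R)}+\|\nabla\phi\|_{L^2(\Omega)},\qquad \|\psi_1\|_{L^2(\Omega)}\lesssim \|\p_1\Psi_1\|_{L^2(\R)}+\|\nabla\psi_1\|_{L^2(\Omega)}.
\end{equation*}
Combined with $\zeta_1=(\psi_1-\ut_1\phi)/\rho$, $|\ut_1|\lesssim \delta+\e$ by~\cref{small-us,small-tilde}, and $\rho\sim 1$ by~\cref{bdd-rhot} (and the a priori bound~\cref{small-uv}), this produces
\begin{equation*}
\|\zeta_1\|_{L^2(\Omega)}\lesssim \|\p_1\Phi,\p_1\Psi_1\|_{L^2(\R)}+\|\nabla\psi_1\|_{L^2(\Omega)}+(\delta+\e)\,\|\nabla\phi\|_{L^2(\Omega)}.
\end{equation*}

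Next I would attack $\nabla\psi_1$ by differentiating $\psi_1=\rho\zeta_1+\ut_1\phi=\rhot\zeta_1+\phi\zeta_1+\ut_1\phi$ and using the Leibniz rule:
\begin{equation*}
\nabla\psi_1=\rho\,\nabla\zeta_1+\zeta_1\nabla\rhot+\zeta_1\nabla\phi+\ut_1\nabla\phi+\phi\nabla\ut_1.
\end{equation*}
The first term is bounded by $\|\nabla\zeta_1\|_{L^2}$; the factors $\nabla\rhot$, $\nabla\ut_1$, $\ut_1$ contribute a $(\delta+\e)$-small multiplier by~\cref{small-us,small-tilde}; and the $\zeta_1\nabla\phi$ term is absorbed via $\|\zeta_1\|_{L^\infty}\lesssim \nu$ from~\cref{small-phizeta}. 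After invoking the $\phi$-bound above, I obtain
\begin{equation*}
\|\nabla\psi_1\|_{L^2(\Omega)}\lesssim \|\nabla\zeta_1\|_{L^2(\Omega)}+(\delta+\e)\,\|\zeta_1\|_{L^2(\Omega)}+(\delta+\e+\nu)\bigl(\|\p_1\Phi\|_{L^2(\R)}+\|\nabla\phi\|_{L^2(\Omega)}\bigr).
\end{equation*}

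Finally, I would close the system. Substituting the $\nabla\psi_1$ estimate into the $\zeta_1$ estimate gives a $(\delta+\e)\|\zeta_1\|_{L^2}$ term on the right, which, for $\delta+\e$ sufficiently small, can be absorbed into the left-hand side to yield~\cref{bdd-z1}. Feeding~\cref{bdd-z1} back into the $\nabla\psi_1$ estimate then delivers~\cref{bdd-nab-psi}. The whole argument is algebraic plus Poincar\'e, and the only step that requires care is the bookkeeping that ensures every occurrence of $\|\phi\|_{L^2(\Omega)}$ and $\|\psi_1\|_{L^2(\Omega)}$ — which we do not control directly in the energy hierarchy — is traded either for the anti-derivative norms $\|\p_1\Phi,\p_1\Psi_1\|_{L^2(\R)}$ (zero modes) or for $\|\nabla\phi,\nabla\psi_1\|_{L^2(\Omega)}$ (non-zero modes), with the latter being further reduced via the identity $\psi_1=\rho\zeta_1+\ut_1\phi$.
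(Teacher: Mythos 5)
Your argument is correct and uses essentially the same ingredients as the paper's proof: the identity $\psi_1=\rho\zeta_1+\ut_1\phi$, the Leibniz rule, the Poincar\'{e}/anti-derivative trade from \cref{key1,key2}, the smallness of $\ut_1$, $\nabla\rhot$, $\nabla\uvt$ from \cref{small-us,small-tilde}, and a small-coefficient absorption. The only cosmetic difference is the order of closure --- the paper absorbs $(\delta+\e)\|\nabla\psi_1\|_{L^2}$ to get \cref{bdd-nab-psi} first and then derives \cref{bdd-z1}, whereas you substitute the two inequalities into one another and absorb $(\delta+\e)\|\zeta_1\|_{L^2}$ first --- but this leads to the same conclusion.
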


\begin{proof}
	Note that $ \psi_1 = \rho \zeta_1 + \ut_1 \phi $. It follows from \cref{small-tilde,small-phizeta,key1,key2} that 
	\begin{align*}
		\norm{\nabla \psi_1}_{L^2(\Omega)} & \lesssim \norm{\nabla \zeta_1}_{L^2(\Omega)} + (\delta + \e + \nu) \norm{\nabla\phi}_{L^2(\Omega)} + (\delta+\e) \norm{\phi,\zeta_1}_{L^2(\Omega)} \\
		& \lesssim \norm{\nabla \zeta_1}_{L^2(\Omega)}  + (\delta + \e + \nu)  \norm{\nabla\phi}_{L^2(\Omega)}  + (\delta+\e) \norm{\nabla\psi_1}_{L^2(\Omega)} \\
		& \quad + (\delta+\e) \norm{\p_1 \Phi, \p_1 \Psi_1}_{L^2(\R)}.
	\end{align*}
	Thus, if $ \delta>0 $ and $ \gamma_0>0 $ are small, then \cref{bdd-nab-psi} holds true. Then with \cref{bdd-nab-psi}, one can use  \cref{key1,key2,small-tilde} again to obtain
	\begin{align*}
		\norm{\zeta_1}_{L^2(\Omega)} & \lesssim \norm{\psi_1}_{L^2(\Omega)} + (\delta+\e) \norm{\phi}_{L^2(\Omega)} \\ 	
		& \lesssim \norm{\p_1 \Psi_1}_{L^2(\R)} + \norm{\nabla \psi_1}_{L^2(\Omega)} + (\delta+\e) \big(\norm{\p_1 \Phi}_{L^2(\R)} + \norm{\nabla\phi }_{L^2(\Omega)}\big)  \\
		& \lesssim \norm{\p_1 \Phi, \p_1 \Psi_1}_{L^2(\R)} + \norm{\nabla \zeta_1}_{L^2(\Omega)} + (\delta + \e + \nu)  \norm{\nabla\phi}_{L^2(\Omega)},
	\end{align*}
	which completes the proof.
\end{proof}

\vspace{.3cm}

Now we return to the original system \cref{equ-phizeta} to estimate the 3D perturbation $ (\phi, \zeta) $.

\begin{Lem}\label{Lem-est4}
Under the assumptions of \cref{Prop-apriori}, there exist $ \delta_0>0, \gamma_0>0 $ and $ \nu_0>0 $ such that if $ \delta \leq \delta_0 $, $ \e \leq \gamma_0 \delta $ and $ \nu \leq \nu_0, $ then
\begin{equation}\label{est3}
	\begin{aligned}
		& \sup_{t\in(0,T)} \norm{\phi, \zeta}^2_{L^2(\Omega)} + \int_0^T \norm{\nabla \zeta}_{L^2(\Omega)}^2  dt \\
		& \qquad \lesssim \norm{\Phi_0, \Psi_{01}}^2_{L^2(\R)} + \norm{\phi_0,  \zeta_0}^2_{L^2(\Omega)}  +  (\delta+\e+\nu) \int_0^T \norm{\nabla\phi }_{L^2(\Omega)}^2 dt + \e.
	\end{aligned} 
\end{equation}

\end{Lem}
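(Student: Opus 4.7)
The plan is to perform an $L^2$-energy estimate directly on \cref{equ-phizeta} in Eulerian variables, using $\frac{p'(\rhot)}{\rhot}\phi$ as a multiplier for the first (scalar) equation and $\zeta$ as a multiplier for the second (vector) equation, then sum and integrate over $\Omega$. The boundary terms vanish due to periodicity in $\xp$ and decay as $x_1 \to \pm\infty$. The essential cancellation is the usual one: the $\rho \dv\zeta$ contribution from the density equation, when paired with $\frac{p'(\rhot)}{\rhot}\phi$, produces $p'(\rhot)\phi\dv\zeta$ modulo $O(\phi^2)\dv\zeta$, which exactly cancels the $\nabla(p(\rho)-p(\rhot))\cdot\zeta$ term from the momentum equation after integration by parts and expanding $p(\rho)-p(\rhot) = p'(\rhot)\phi + O(\phi^2)$. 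Combined with $\p_t\rho + \dv(\rho\uv) = 0$, this yields
\[
\frac{d}{dt}\int_\Omega\!\Big(\frac{p'(\rhot)}{2\rhot}\phi^2 + \frac{\rho}{2}|\zeta|^2\Big) dx + \mu\int_\Omega|\nabla\zeta|^2 dx + (\mu{+}\lambda)\int_\Omega(\dv\zeta)^2 dx = \sum_k J_k,
\]
where $J_k$ collects the lower-order terms.

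Next I would classify and estimate the $J_k$'s. The worst group consists of the shock-compression terms of the form $\int \nabla\rhot\cdot\zeta\cdot\frac{p'(\rhot)}{\rhot}\phi\,dx$, $\int (\dv\uvt)\phi^2 dx$, $\int \rho(\zeta\cdot\nabla\uvt)\cdot\zeta\, dx$, and $\int \phi(\p_t\uvt + \uvt\cdot\nabla\uvt)\cdot\zeta\,dx$. Since by \cref{ansatz-shock,ansatz-shock-1} and \cref{Lem-shock} the gradients $\nabla\rhot,\nabla\uvt$ are pointwise of size $O(\delta)+\e e^{-ct}$, and crucially $\uvt^s = O(\delta)$ by the Galilean normalization \cref{u-pm,small-u}, these terms are majorized by $(\delta+\e)(\norm{\phi}_{L^2}^2 + \norm{\zeta}_{L^2}^2)$ plus an $\e e^{-ct}$ remainder. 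The nonlinear higher-order residuals (e.g.\ $[p(\rho)-p(\rhot)-p'(\rhot)\phi]\dv\zeta$ and convective remainders) are controlled by $\nu\,(\norm{\phi}_{L^2}^2 + \norm{\nabla\zeta}_{L^2}^2)$ using the Sobolev bound \cref{small-phizeta}, and the source contributions $\int\frac{p'(\rhot)}{\rhot}\phi g_1 + \zeta\cdot(\gv_2 - g_1\uvt)\,dx$ are handled by \cref{Lem-F}.

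The key decomposition step is then to split $\norm{\phi}_{L^2(\Omega)}^2 = \norm{\phi^\od}_{L^2(\R)}^2 + \norm{\phi^\md}_{L^2(\Omega)}^2$ and likewise for $\zeta_1$. For the non-zero mode, Poincar\'e in $\Torus^2$ (\cref{poincare}) gives $\norm{\phi^\md}_{L^2(\Omega)}^2 \lesssim \norm{\nabla\phi}_{L^2(\Omega)}^2$, which is absorbable into the right-hand side of \cref{est3} with the factor $(\delta+\e+\nu)$. For the zero mode I use \cref{key1,key2} to convert $\norm{\phi^\od}_{L^2(\R)}^2, \norm{\psi_1^\od}_{L^2(\R)}^2$ into $\norm{\p_1\Phi, \p_1\Psi_1}_{L^2(\R)}^2$, and apply \cref{Lem-est3} to dispose of any $\zeta_1$ without a gradient in favor of $\norm{\p_1\Phi,\p_1\Psi_1}_{L^2(\R)}$, $\norm{\nabla\zeta_1}_{L^2(\Omega)}$, and $(\delta+\e+\nu)\norm{\nabla\phi}_{L^2(\Omega)}$. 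Integrating in time on $(0,T)$ and invoking \cref{Lem-est1,Lem-est2} to bound $\int_0^T \norm{\p_1\Phi}_{L^2(\R)}^2 dt$ and $\int_0^T \norm{|(\ut_1^s)'|^{1/2}\Psi_1}_{L^2(\R)}^2 dt$ by the initial data plus the same $(\delta+\e+\nu)\int_0^T\norm{\nabla\phi}^2 dt$ quantity and an $\e$ remainder closes \cref{est3}, provided $\delta,\gamma_0,\nu_0$ are chosen small enough.

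The main obstacle I foresee is showing that the Eulerian shock-compression terms, in particular the cross-term $\int \p_1\rhot^s \phi\,\zeta_1 dx$ and the convective $\int (\dv\uvt^s)\phi^2 dx$, really do collapse to $(\delta+\e+\nu)$-small contributions rather than producing an ineradicable weight $\int |(\ut_1^s)'|\phi^2 dx$. This is precisely the point at which one must use that the background velocity $\uvt^s$ is itself $O(\delta)$ after the Galilean normalization \cref{u-pm}, so that factors of $\ut_1^s$ appearing alongside $(\ut_1^s)'$ shrink these otherwise borderline terms into absorbable ones; in the Lagrangian framework this cancellation is automatic, whereas here it has to be exhibited by hand before the anti-derivative/zero-mode machinery of the previous lemmas can close the estimate.
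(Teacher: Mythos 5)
Your proposal is correct and follows essentially the same path as the paper: the only cosmetic difference is that the paper uses the relative entropy functional $\Xi(\rho,\rhot)=\int_{\rhot}^{\rho}\frac{p(s)-p(\rhot)}{s^2}\,ds$ (together with $\p_t\rho+\dv(\rho\uv)=0$) in place of your quadratic multiplier $\frac{p'(\rhot)}{\rhot}\phi$, which yields the same weighted $\phi^2$ energy and the same cancellation of $\zeta\cdot\nabla\big(p(\rho)-p(\rhot)\big)$ modulo absorbable remainders. Your concern at the end is resolved exactly as you anticipate: the paper's estimates \cref{star,small-us} hinge on the Galilean normalization $\uvt^s=O(\delta)$ so that each shock-compression term carries an explicit $(\delta+\e)$ factor, and the remaining $\|\phi,\zeta_1\|_{L^2}^2$ is converted via \cref{key1,bdd-z1} into $\|\p_1\Phi,\p_1\Psi_1\|_{L^2(\R)}^2$ plus gradient terms, which Lemmas \ref{Lem-est1}--\ref{Lem-est3} then close.
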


\begin{proof}

Define 
\begin{equation*}
	\Xi(\rho,\rhot) := \int_{\rhot}^\rho \frac{p(s)-p(\rhot)}{s^2} ds = \frac{\gamma}{(\gamma-1)\rho} \left[p(\rho)-p(\rhot) -p'(\rhot) (\rho-\rhot)\right],
\end{equation*}
which satisfies $ \Xi(\rho,\rhot) \sim \abso{\phi}^2 .$ 
Then it follows from \cref{NS}$ _1 $ that
\begin{equation}\label{eq3}
\begin{aligned}
& \p_t(\rho \Xi) + \dv(\rho \Xi \uv) = \rho \big(\p_t \Xi + \uv\dnab \Xi\big) \\
& \qquad = - \left[p(\rho)-p(\rhot) -p'(\rhot) \phi \right] \dv \uvt -\dv\left[ \big(p(\rho)-p(\rhot)\big) \zeta \right] \\
& \qquad \quad + \zeta \cdot \nabla (p(\rho)-p(\rhot)) + \frac{p'(\rhot)}{\rhot} (g_1 - \zeta \cdot \nabla\rhot) \phi.
\end{aligned}
\end{equation}
Multiplying $ \cdot\zeta $ on \cref{equ-phizeta}$ _2 $ yields that 
\begin{equation}\label{eq4}
\begin{aligned}
	& \p_t \Big( \frac{1}{2} \rho \abso{\zeta}^2 \Big) + \mu \abso{\nabla\zeta}^2  + (\mu+\lambda)\abso{\dv \zeta}^2 = \dv (\cdots) - \zeta \cdot \nabla (p(\rho)-p(\rhot))  \\
	& \qquad  - \rho (\zeta\cdot\nabla \uvt) \cdot \zeta - \phi (\p_t\uvt + \uvt \dnab \uvt) \cdot \zeta + (\gv_2-g_1\uvt) \cdot \zeta,
\end{aligned}
\end{equation}
where $ (\cdots) = \frac{\mu}{2} \nabla \big(\abso{\zeta}^2\big) + (\mu+\lambda)\zeta\dv\zeta - \frac{1}{2} \rho \abso{\zeta}^2 \uv. $
Then summing up \cref{eq3,eq4} yields that
\begin{align}
	\p_t \Big(\rho \Xi + \frac{1}{2} \rho \abso{\zeta}^2 \Big) + \mu \abso{\nabla\zeta}^2  + (\mu+\lambda)\abso{\dv \zeta}^2 = \dv (\cdots) + I_8 + I_9, \label{eq4.5}
\end{align}
where $ \dv(\cdots) $ denotes the summation of some terms which vanish after integration on $ \Omega $, and
\begin{align*}
	I_8 & = - \left[p(\rho)-p(\rhot) -p'(\rhot) \phi \right] \dv \uvt - \frac{p'(\rhot)}{\rhot} \nabla \rhot \cdot \zeta \phi \\
	& \quad\ - \rho (\zeta\cdot\nabla \uvt) \cdot \zeta - \phi (\p_t\uvt + \uvt \dnab \uvt) \cdot \zeta, \\
	I_9 & = \frac{p'(\rhot)}{\rhot} g_1 \phi +  (\gv_2-g_1\uvt) \cdot \zeta.
\end{align*}
Note that 
\begin{equation*}
	\begin{aligned}
	\rhot & = \rhot^s + \big(\rho^s_{st+X}-\rhot^s \big) + v_- (1-\eta_{st+X}) + v_+ \eta_{st+X}, \\
	\mvt & = \mt_1^s \E_1 + \big[(m_1^s)_{st+Y} -\mt_1^s\big] \E_1 + \wv_- (1-\eta_{st+Y}) + \wv_+ \eta_{st+Y}.
\end{aligned}
\end{equation*}
This, together with Lemmas \ref{Lem-per} and \ref{Lem-shift}, yields that
\begin{equation}\label{star}
	\begin{aligned}
	\nabla \rhot & = (\rhot^s)' \E_1 + O(1) \e e^{-ct}, \\
	\p_i \uvt & =  (\ut_1^s)' \delta_{1i} \E_1 + O(1) \e e^{-ct} \qquad \text{for } \ i=1,2,3, \\
	\p_t \uvt & = \p_t \ut_1^s \E_1 + O(1) \e e^{-ct}.
	\end{aligned}
\end{equation}
Using \cref{small-tilde,key1,bdd-z1}, one can get that
\begin{align}
	\norm{I_8}_{L^1(\Omega)} & \lesssim (\delta+\e) \norm{\phi, \zeta_1}_{L^2(\Omega)}^2 + \e e^{-ct} \sup_{t\in(0,T)} \norm{\zeta}_{L^2(\Omega)}^2 \notag \\
	& \lesssim \norm{\p_1 \Phi, \p_1\Psi_1}_{L^2(\R)} + (\delta+\e) \norm{\nabla\phi, \nabla\zeta_1}_{L^2(\Omega)} + \e \nu e^{-ct}. \label{I-8}
\end{align}
And it follows from \cref{Lem-F} and \cref{nu} that
\begin{align}
	\norm{I_9}_{L^1(\Omega)} \lesssim \e \delta^{\frac{1}{2}} e^{-ct} \norm{\phi,\zeta}_{L^2(\Omega)} \lesssim \e\delta^{\frac{1}{2}} \nu e^{-ct}. \label{I-9}
\end{align}

Thus, by integrating \cref{eq4.5} over $ \Omega \times (0,T) $ and making use of \cref{bdd-nab-psi}, \cref{I-8}, \cref{I-9} and Lemmas \ref{Lem-est1} and \ref{Lem-est2}, one can obtain \cref{est3}.

\end{proof}

\vspace{.2cm}

\begin{Lem}\label{Lem-est5}
Under the assumptions of \cref{Prop-apriori}, there exist $ \delta_0>0, \gamma_0>0 $ and $ \nu_0>0 $ such that if $ \delta \leq \delta_0 $, $ \e \leq \gamma_0 \delta $ and $ \nu \leq \nu_0, $ then
\begin{equation}\label{est5}
	\begin{aligned}
		& \sup_{t\in(0,T)} \norm{\nabla \phi}^2_{L^2(\Omega)} + \int_0^T  \big(\norm{\nabla\phi}^2_{L^2(\Omega)} + \norm{\phi,\zeta_1}^2_{L^2(\Omega)}\big) dt \\
		& \qquad \lesssim \norm{\Phi_0, \Psi_{01}}^2_{L^2(\R)} + \norm{\phi_0}_{H^1(\Omega)}^2 + \norm{\zeta_0}^2_{L^2(\Omega)} + \nu \int_0^T \norm{\nabla^3 \zeta}_{L^2(\Omega)}^2 dt + \e.
	\end{aligned}
\end{equation}
\end{Lem}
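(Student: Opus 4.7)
Since the continuity equation \cref{equ-phizeta}$ _1 $ is purely transport in $\phi$, the dissipation $\norm{\nabla\phi}_{L^2}^2$ must be extracted from the pressure term of the momentum equation. The idea is to test \cref{equ-phizeta}$ _2 $ against $\nabla\phi$ and integrate over $\Omega$; the pressure yields
\begin{equation*}
\int \nabla(p(\rho)-p(\rhot))\cdot\nabla\phi\,dx = \int p'(\rho)|\nabla\phi|^2\,dx + \int (p'(\rho)-p'(\rhot))\nabla\rhot\cdot\nabla\phi\,dx,
\end{equation*}
whose first integral is coercive by the uniform lower bound on $p'(\rho)$ from \cref{bdd-rhot}, while the cross term is absorbable thanks to the $L^\infty$-smallness of $\phi$ and the boundedness of $\nabla\rhot$. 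The time-derivative contribution is reorganised as
\begin{equation*}
\int \rho\p_t\zeta\cdot\nabla\phi = \frac{d}{dt}\int\rho\zeta\cdot\nabla\phi + \int\dv\mv\,\zeta\cdot\nabla\phi + \int\dv(\rho\zeta)\,\p_t\phi,
\end{equation*}
and $\p_t\phi$ is then eliminated via \cref{equ-phizeta}$ _1 $, producing $-\int|\dv(\rho\zeta)|^2$ bounded by $\norm{\nabla\zeta}^2_{L^2}$ plus lower order.

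The viscous contributions $-\mu\int\lap\zeta\cdot\nabla\phi$ and $-(\mu+\lambda)\int\nabla\dv\zeta\cdot\nabla\phi$ are controlled by Cauchy-Schwarz as $\eta\norm{\nabla\phi}^2_{L^2}+C_\eta\norm{\nabla^2\zeta}^2_{L^2}$, with $\eta$ small enough to be absorbed on the left. Nonlinear convective and rotational terms contribute $(\delta+\e+\nu)\norm{\nabla\phi,\nabla\zeta}^2_{L^2}$, also absorbable, while the ansatz errors decay exponentially thanks to \cref{Lem-F}. Integrating in time and absorbing $|\int\rho\zeta\cdot\nabla\phi|\leq \eta\norm{\nabla\phi}^2_{L^2}+C_\eta\norm{\zeta}^2_{L^2}$ using the $\sup\norm{\zeta}^2_{L^2}$ bound from \cref{Lem-est4}, I obtain the integral dissipation $\int_0^T\norm{\nabla\phi}^2_{L^2}\,dt$.

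To upgrade this to the supremum bound $\sup_t\norm{\nabla\phi}^2_{L^2}$, I would differentiate \cref{equ-phizeta}$ _1 $ spatially and test against $\nabla\phi$, producing $\frac{1}{2}\frac{d}{dt}\norm{\nabla\phi}^2_{L^2}$ directly. The critical coupling $\int\rho\,\nabla\dv\zeta\cdot\nabla\phi$ is split as $\int\rhot\,\nabla\dv\zeta\cdot\nabla\phi+\int\phi\,\nabla\dv\zeta\cdot\nabla\phi$: the first is bounded by $\norm{\rhot}_{L^\infty}\norm{\nabla^2\zeta}_{L^2}\norm{\nabla\phi}_{L^2}$ and handled by Young; the second uses $\norm{\phi}_{L^\infty}\lesssim\nu$ from \cref{small-phizeta} combined with the Gagliardo-Nirenberg inequality \cref{Lem-GN} on the duct $\Omega$, giving rise to the term $\nu\int_0^T\norm{\nabla^3\zeta}^2_{L^2}\,dt$ appearing in \cref{est5}. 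A Gronwall argument then closes the supremum bound.

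Lastly, the dissipative norms $\int_0^T(\norm{\phi}^2_{L^2}+\norm{\zeta_1}^2_{L^2})\,dt$ on the left are obtained from the key identities \cref{key1,key2} in \cref{Lem-est1} and from \cref{bdd-z1} in \cref{Lem-est3}, which express $\norm{\phi}_{L^2(\Omega)}$ and $\norm{\zeta_1}_{L^2(\Omega)}$ through $\norm{\p_1\Phi,\p_1\Psi_1}_{L^2(\R)}$ (controlled by \cref{Lem-est1,Lem-est2}), $\norm{\nabla\phi}_{L^2(\Omega)}$ (just obtained), and $\norm{\nabla\zeta_1}_{L^2(\Omega)}$ (from \cref{Lem-est4}). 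The main obstacle lies in the supremum step: the extraction of the small prefactor $\nu$ in front of $\norm{\nabla^3\zeta}^2_{L^2}$ is essential to preserve the closed-loop structure of the a priori estimates, and relies on the G-N decomposition of \cref{Lem-GN} adapted to the infinite duct $\Omega=\R\times\Torus^2$ together with the $L^\infty$-smallness of $\phi$.
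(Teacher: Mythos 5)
Your high-level plan — pairing a test of the momentum equation against $\nabla\phi$ with a test of the differentiated continuity equation against $\nabla\phi$ — is the right skeleton, and your reorganisation of $\int\rho\,\partial_t\zeta\cdot\nabla\phi$ via $\partial_t\rho+\dv\mv=0$ is correct. But there is a genuine gap in how you handle the viscous cross-term, and it is precisely the point of the proof.

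You propose to bound $-\mu\int\lap\zeta\cdot\nabla\phi - (\mu+\lambda)\int\nabla\dv\zeta\cdot\nabla\phi$ by Cauchy–Schwarz/Young, producing $\eta\norm{\nabla\phi}^2_{L^2}+C_\eta\norm{\nabla^2\zeta}^2_{L^2}$, and similarly in the supremum step you treat $\int\rhot\,\nabla\dv\zeta\cdot\nabla\phi$ by Young, again producing a $\norm{\nabla^2\zeta}^2_{L^2}$ term with a large constant. After time integration this requires $\int_0^T\norm{\nabla^2\zeta}^2_{L^2}\,dt$ to be controlled \emph{already}, but that quantity is only produced by \cref{Lem-est6}, whose proof in turn invokes \cref{Lem-est5}. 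The resulting estimate would be circular and cannot close. This is not a cosmetic issue: the statement \cref{est5} has no $\int_0^T\norm{\nabla^2\zeta}^2_{L^2}\,dt$ on its right-hand side (only $\nu\int_0^T\norm{\nabla^3\zeta}^2_{L^2}\,dt$, with the crucial small prefactor $\nu$).

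The paper avoids this by never estimating the dominant viscous cross-term at all. It rewrites $\frac{1}{\rho}\nabla\phi\cdot[\mu\lap\zeta+(\mu+\lambda)\nabla\dv\zeta]$ using the identity \cref{tool1}: $\lap\zeta=\nabla\dv\zeta+(\lap\zeta-\nabla\dv\zeta)$, with the second piece recast as a curl and pushed into a divergence plus a lower-order term ($I_{13}$). What remains is exactly $\frac{\mut}{\rho}\nabla\phi\cdot\nabla\dv\zeta$ with a sign that cancels, \emph{pointwise}, the term $\frac{1}{\rho}\nabla\dv\zeta\cdot\nabla\phi$ coming from the differentiated continuity equation when the two identities \cref{eq5,eq6} are combined as $\mut\cdot\text{\cref{eq5}}+\text{\cref{eq6}}$ before integration. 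The only higher-order $\zeta$-derivative surviving this cancellation is the cubic term $I_{11}$ involving $\nabla\zeta\,\nabla\phi\cdot\nabla\phi$, which is where the Gagliardo–Nirenberg bound \cref{Sobolev} on $\norm{\nabla\zeta}_{L^\infty}$ produces the $\nu\int_0^T\norm{\nabla^3\zeta}^2_{L^2}\,dt$ contribution. In your plan you do use Gagliardo–Nirenberg, but on the wrong term: you apply it to $\int\phi\,\nabla\dv\zeta\cdot\nabla\phi$, while the leading part $\int\rhot\,\nabla\dv\zeta\cdot\nabla\phi$ (not small in $\nu$) is left to Young, which is where the estimate breaks. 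You need to combine the two tests \emph{before} estimating, exploiting the exact cancellation rather than bounding each viscous cross-term separately.
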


\begin{proof}
By taking the gradient $ \nabla $ on \cref{equ-phizeta}$_1$ and multiplying the resulting equation by $ \cdot \frac{\nabla\phi }{\rho^2}, $ one has that
\begin{equation}\label{eq5}
	\begin{aligned}
	& \p_t \Big(\frac{\abso{\nabla \phi}^2}{2\rho^2}\Big) + \frac{1}{\rho} \nabla \dv \zeta \cdot \nabla\phi  = \dv \Big( -\frac{\abso{\nabla\phi }^2}{2\rho^2} \uv\Big) + I_{10}  + I_{11},  
\end{aligned}
\end{equation}
where 
\begin{equation*} 
	\begin{aligned}
	I_{10} & = \frac{-1}{\rho^2} \nabla\phi \cdot \big( \dv \zeta \nabla \rhot + \nabla\uvt \nabla \phi + \nabla \dv \uvt \phi - \frac{1}{2} \dv \uvt \nabla \phi \\
	& \qquad\qquad\qquad + \nabla\zeta \nabla\rhot + \nabla^2 \rhot \zeta - \nabla g_1 \big), \\
	I_{11} & = \frac{\dv \zeta}{2\rho^2} \abso{\nabla\phi }^2 - \frac{1}{\rho^2} \nabla\phi  \cdot \nabla \zeta \nabla\phi.
\end{aligned}
\end{equation*}
Using \cref{est-g,key1,bdd-z1}, one can verify that
\begin{align}
	\norm{I_{10}}_{L^1(\Omega)} & \lesssim (\e+\delta) \big(\norm{\phi}_{H^1(\Omega)}^2 + \norm{\nabla \zeta}_{L^2(\Omega)}^2 + \norm{ \zeta_1}_{L^2(\Omega)}^2 \big)  + \e e^{-ct} \norm{\zeta,\nabla\phi}_{L^2(\Omega)} \notag \\
	& \lesssim (\e+\delta) \big(\norm{\p_1\Phi,\p_1 \Psi_1}_{L^2(\R)}^2 +  \norm{\nabla\phi,\nabla\zeta}_{L^2(\Omega)}^2\big) +  \e \nu e^{-ct}. \label{I-10}
\end{align}
It follows from \cref{Lem-GN} and \cref{nu} that
\begin{align}
	\norm{\nabla\zeta}_{L^\infty(\Omega)} 
	& \lesssim \norm{\nabla^2 \zeta }_{L^2(\Omega)}^{\frac{1}{2}} \norm{\nabla\zeta}_{L^2(\Omega)}^{\frac{1}{2}} + \norm{\nabla^2 \zeta }_{L^2(\Omega)} + \norm{\nabla^3 \zeta }_{L^2(\Omega)}^{\frac{3}{4}} \norm{\nabla\zeta}_{L^2(\Omega)}^{\frac{1}{4}}  \notag \\
	& \lesssim \nu + \nu^{\frac{1}{4}} \norm{\nabla^3 \zeta }_{L^2(\Omega)}^{\frac{3}{4}}. \label{Sobolev}
\end{align}
Then $ I_{11} $ satisfies that 
\begin{align}
	\norm{I_{11}}_{L^1(\Omega)} & \lesssim \norm{\nabla\zeta}_{L^\infty(\Omega)} \norm{\nabla\phi}_{L^2(\Omega)}^2 \notag \\
	& \lesssim \nu \norm{\nabla\phi}_{L^2(\Omega)}^2  + \nu \norm{\nabla\phi}_{L^2(\Omega)}^{\frac{5}{4}} \norm{\nabla^3 \zeta }_{L^2(\Omega)}^{\frac{3}{4}}. \label{I-11}
\end{align}


On the other hand, since it holds that
\begin{align}
	\p_t \zeta \cdot \nabla \phi = \p_t \big(\zeta\cdot\nabla \phi \big) - \dv (\zeta\p_t\phi ) + \dv \zeta  \p_t \phi, \label{fact1}
\end{align}
and 
\begin{align}
	& \frac{1}{\rho} \nabla \phi \cdot \left[ \mu \lap \zeta + (\mu+\lambda) \nabla\dv \zeta \right] \notag \\
	& \qquad = \frac{\mut}{\rho} \nabla\phi  \cdot \nabla\dv \zeta + \frac{\mu}{\rho} \nabla\phi  \cdot \big( \lap \zeta - \nabla\dv \zeta \big) \notag \\
	& \qquad = \frac{\mut}{\rho} \nabla\phi  \cdot \nabla\dv \zeta + \dv \Big(\frac{\mu  \nabla \phi \times \text{curl} \zeta }{\rho}  \Big) + \frac{\mu \nabla \rho \cdot \big(\nabla \phi \times \text{curl} \zeta \big) }{\rho^2}, \label{tool1}
\end{align}
then multiplying $ \frac{\nabla\phi }{\rho}\cdot $ on \cref{equ-phizeta}$ _2 $ yields that
\begin{equation}\label{eq6}
	\begin{aligned}
		\p_t \big(\zeta\cdot\nabla \phi \big) + \frac{p'(\rho)}{\rho} \abso{\nabla \phi}^2 - \frac{\mut}{\rho} \nabla\phi \cdot \nabla\dv \zeta =  \dv (\cdots) + \sum_{i=12}^{14} I_i,
	\end{aligned}
\end{equation}
where $ (\cdots) = \zeta\p_t\phi + \frac{ \mu  }{\rho} \nabla \phi \times \text{curl} \zeta $ and
\begin{align*}
	I_{12} & = - \dv \zeta  \p_t \phi, \notag \\
	I_{13} & = \frac{\mu \nabla \rho \cdot \big(\nabla \phi \times \text{curl} \zeta \big) }{\rho^2}  = \frac{\mu \nabla \rhot \cdot \big(\nabla \phi \times \text{curl} \zeta \big) }{\rho^2},  \\
	I_{14} & = - \frac{\nabla\phi}{\rho} \cdot \big[ \rho \uv \cdot \nabla\zeta + (p'(\rho) - p'(\rhot)) \nabla\rhot + \rho \zeta \dnab \uvt \\
	& \qquad\qquad\quad + \phi (\p_t \uvt + \uvt \dnab \uvt) - (\gv_2 - g_1 \uvt) \big].
\end{align*}
Here and hereafter ``$\times$'' denoting the cross product.

\vspace{.2cm}

It follows from \cref{equ-phizeta}$_1 $, \cref{key1,bdd-z1,star,est-g} that
\begin{align}
	\norm{I_{12}}_{L^1(\Omega)} & \lesssim \norm{\nabla\zeta}_{L^2(\Omega)} \big[ \norm{\nabla \zeta}_{L^2(\Omega)} + (\delta+\e+\nu) \norm{\phi}_{H^1(\Omega)} + \delta \norm{\zeta_1}_{L^2(\Omega)} \notag \\
	& \qquad\qquad\qquad + \e e^{-ct} \norm{\zeta}_{L^2(\Omega)} + \norm{g_1}_{L^2(\Omega)} \big] \notag \\
	& \lesssim \norm{\nabla\zeta}_{L^2(\Omega)}^2 + \norm{\p_1 \Phi, \p_1 \Psi_1}_{L^2(\R)}^2 + (\delta+\e+\nu) \norm{\nabla \phi}_{L^2(\Omega)}^2 + \e e^{-ct}. \label{I-12}
\end{align}
Similarly, one can prove that
\begin{align}
	\norm{I_{13}}_{L^1(\Omega)} & \lesssim (\delta+\e) \big(\norm{\nabla\phi}^2_{L^2(\Omega)} + \norm{\nabla\zeta}^2_{L^2(\Omega)} \big), \label{I-13} \\
	\norm{I_{14}}_{L^1(\Omega)} 
	& \lesssim \norm{\nabla\phi}_{L^2(\Omega)} \big[ \delta \norm{\nabla \zeta}_{L^2(\Omega)} + (\delta+\e) \norm{\phi,\zeta_1}_{L^2(\Omega)} + \e e^{-ct} \norm{\zeta}_{L^2(\Omega)} + \norm{g_1,\gv_2}_{L^2(\Omega)} \big] \notag \\
	& \lesssim \norm{\nabla\zeta}_{L^2(\Omega)}^2 + \norm{\p_1 \Phi, \p_1 \Psi_1}_{L^2(\R)}^2 + (\delta+\e) \norm{\nabla \phi}_{L^2(\Omega)}^2 + \e e^{-ct}. \label{I-14}
\end{align}

Thus, collecting \cref{I-10,I-11,I-12,I-13,I-14} and using Lemmas \ref{Lem-est1}--\ref{Lem-est4}, one can integrate the summation of $ \mut \cdot $\cref{eq5} and \cref{eq6} over $ \Omega \times (0,T) $ to obtain that
\begin{align*}
	& \sup_{t\in(0,T)} \norm{\nabla \phi}^2_{L^2(\Omega)} + \int_0^T  \norm{\nabla\phi}^2_{L^2(\Omega)} dt \\
	& \qquad \lesssim \norm{\Phi_0, \Psi_{01}}^2_{L^2(\R)} + \norm{\phi_0}_{H^1(\Omega)}^2 + \norm{\zeta_0}^2_{L^2(\Omega)} + \nu \int_0^T \norm{\nabla^3 \zeta}_{L^2(\Omega)}^2 dt + \e.
\end{align*}
This, together with \cref{key1,bdd-z1}, can yield \cref{est5}.

\end{proof}

\vspace{.2cm}

\begin{Lem}\label{Lem-est6}
Under the assumptions of \cref{Prop-apriori}, there exist $ \delta_0>0, \gamma_0>0 $ and $ \nu_0>0 $ such that if $ \delta \leq \delta_0 $, $ \e \leq \gamma_0 \delta $ and $ \nu \leq \nu_0, $ then
\begin{equation}\label{est6}
\begin{aligned}
	& \sup_{t\in(0,T)} \norm{\nabla\zeta}^2_{L^2(\Omega)} + \int_0^T \norm{\nabla^2 \zeta}_{L^2(\Omega)}^2 dt \\
	& \qquad \lesssim \norm{\Phi_0, \Psi_{01}}^2_{L^2(\R)} + \norm{\phi_0,\zeta_0}_{H^1(\Omega)}^2 + \nu \int_0^T \norm{\nabla^3 \zeta}_{L^2(\Omega)}^2 dt + \e.
\end{aligned}
\end{equation}	
\end{Lem}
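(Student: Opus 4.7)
\textbf{Proof plan for Lemma~\ref{Lem-est6}.} The plan is to raise the energy estimate by one spatial derivative on the velocity perturbation $\zeta$, mimicking the structure of \cref{Lem-est4} but at the level of $\nabla \zeta$. Concretely, I would apply $\p_k$ ($k=1,2,3$) to the momentum equation \cref{equ-phizeta}$_2$, take the inner product with $\p_k \zeta$, sum over $k$, and integrate over $\Omega$. After an integration by parts on the viscous terms, the principal good part on the left-hand side becomes
\begin{equation*}
\frac{d}{dt}\int_{\Omega} \frac{\rho}{2}\abso{\nabla \zeta}^2\,dx \;+\; \mu\norm{\nabla^2 \zeta}_{L^2(\Omega)}^2 \;+\; (\mu+\lambda)\norm{\nabla \dv \zeta}_{L^2(\Omega)}^2,
\end{equation*}
where the time derivative of $\rho$ contributes a commutator that is absorbed by continuity as in \cref{Lem-est4}.

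Next I would handle the pressure gradient, which is the delicate term because at this level of the hierarchy we do not yet control $\nabla^2\phi$. After differentiation, $\p_k\nabla(p(\rho)-p(\rhot))$ pairs with $\p_k\zeta$; I would integrate by parts in the spatial variable once more to move the extra derivative onto $\zeta$, turning the dangerous contribution into
\begin{equation*}
-\int_{\Omega} p'(\rho)\,\p_k\phi\;\p_k \dv \zeta\,dx \;+\; (\text{lower order}),
\end{equation*}
which by Cauchy's inequality with a small parameter is bounded by $\tfrac{\mu}{4}\norm{\nabla^2\zeta}_{L^2(\Omega)}^2 + C\norm{\nabla\phi}_{L^2(\Omega)}^2$, with the first piece absorbed on the left and the second controlled by \cref{Lem-est5}. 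The derivatives falling on $\rhot$ via $\nabla p'(\rhot)$ produce only $O(\delta+\e\,e^{-ct})$ multipliers thanks to \cref{star} and \cref{Lem-shock}, which are harmless.

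The remaining source terms split into three groups: (i) convective terms of the form $\p_k(\rho\uv\cdot\nabla\zeta)\cdot\p_k\zeta$, which after one integration by parts yield $\norm{\nabla\zeta}_{L^\infty}\norm{\nabla\zeta}_{L^2}^2$ and similar trilinear pieces, bounded via the G--N inequality \cref{Sobolev} by $\nu\norm{\nabla\zeta}_{L^2}^2 + \nu^{1/4}\norm{\nabla\zeta}_{L^2}^{5/4}\norm{\nabla^3\zeta}_{L^2}^{3/4}$, exactly as in the treatment of $I_{11}$; (ii) shift and ansatz interaction terms $\p_k\bigl(\rho\zeta\cdot\nabla\uvt + \phi(\p_t\uvt+\uvt\cdot\nabla\uvt)\bigr)\cdot\p_k\zeta$, whose coefficients are $O(\delta)+O(\e e^{-ct})$ by \cref{star}, and which are dominated by $(\delta+\e)(\norm{\nabla\phi}_{L^2}^2 + \norm{\nabla\zeta}_{L^2}^2 + \norm{\phi,\zeta_1}_{L^2}^2)$ combined with an exponentially decaying piece; (iii) the error terms $\p_k(\gv_2-g_1\uvt)\cdot\p_k\zeta$, controlled via \cref{est-g} by $\e\delta^{1/2} e^{-ct}\norm{\nabla\zeta}_{L^2}$.

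Integrating over $(0,T)$, applying Young's inequality to split the cubic G--N contribution as $\nu\int_0^T \norm{\nabla^3\zeta}_{L^2}^2\,dt + (\text{already-controlled terms})$, and inserting the bounds from \cref{Lem-est1,Lem-est2,Lem-est3,Lem-est4,Lem-est5} to absorb all $\norm{\p_1\Phi,\p_1\Psi_1}_{L^2(\R)}^2$, $\norm{\phi,\zeta_1}_{L^2(\Omega)}^2$, $\norm{\nabla\phi}_{L^2(\Omega)}^2$ and $\norm{\nabla\zeta}_{L^2(\Omega)}^2$ time-integrals, yields \cref{est6}. The main obstacle is the interaction of the pressure gradient with $\nabla\zeta$: since only $\norm{\nabla\phi}_{L^2}$ (and not $\norm{\nabla^2\phi}_{L^2}$) is available at this stage, the integration by parts described above is essential, and one must verify that the residual boundary-free terms with derivatives falling on $\rho$, $\rhot$, or $\uv$ retain enough smallness ($\delta$, $\e$ or $\nu$) to be absorbed.
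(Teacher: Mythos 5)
Your plan is sound and arrives at the same estimate by essentially the same strategy (a first-order energy on $\zeta$ together with the one-dimensional G--N inequality to absorb the trilinear term via $\nu\int\norm{\nabla^3\zeta}^2$), but the bookkeeping is organized differently from the paper's proof.  The paper does not commute a derivative $\p_k$ through the momentum equation: it tests \cref{equ-phizeta}$_2$ directly with $-\lap\zeta/\rho$, which (i) makes the time-derivative term clean, since $\rho\p_t\zeta\cdot(-\lap\zeta/\rho)=-\p_t\zeta\cdot\lap\zeta$ integrates to $\tfrac{d}{dt}\int\tfrac12\abso{\nabla\zeta}^2$ without any $\p_t\rho$ or $\nabla\rho$ commutators, and (ii) leaves the pressure gradient with only one derivative on $\phi$, so no further integration by parts is needed.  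The cost of the paper's choice is the identity \cref{tool2}, whose $\nabla\rho$-correction produces the trilinear piece $I_{16}\sim\dv\zeta\,\nabla\phi\cdot(\lap\zeta-\nabla\dv\zeta)$ that triggers \cref{Sobolev}.  In your version the trilinear pieces appear instead in the convective commutator $\rho\,\p_k\zeta\cdot\nabla\zeta\cdot\p_k\zeta$, and --- the one loose end in your sketch --- you also pick up the extra commutator $\int\p_k\rho\,\p_t\zeta\cdot\p_k\zeta$, which is not a pure ``continuity'' term as in \cref{Lem-est4}; to close it you need to substitute $\p_t\zeta$ from the momentum equation (so that $\p_t\zeta$ costs one $\nabla^2\zeta$ in $L^2$) and use $\norm{\nabla\rho}_{L^\infty}\lesssim\delta+\e+\nu$ to make the resulting $(\delta+\e+\nu)\norm{\nabla^2\zeta}_{L^2}\norm{\nabla\zeta}_{L^2}$ absorbable.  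With that extra step spelled out, your argument produces \cref{est6}.
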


\begin{proof}
	
Multiplying $ -\frac{\lap \zeta}{\rho} \cdot $ on  \cref{equ-phizeta}$ _2 $ and using the fact that 
\begin{equation}\label{tool2}
	\begin{aligned}
	\frac{1}{\rho} \nabla\dv\zeta\cdot\lap\zeta  & = \frac{\abso{\nabla\dv\zeta}^2}{\rho}  + \dv\Big[\frac{\dv\zeta}{\rho}  \big(\lap\zeta-\nabla\dv\zeta \big) \Big] \\
	& \quad + \frac{1}{\rho^2} \dv\zeta \nabla\rho \cdot \big( \lap \zeta - \nabla \dv\zeta \big),
\end{aligned}
\end{equation}
one can obtain that
\begin{equation}\label{eq7} 
	\p_t \Big(\frac{\abso{\nabla\zeta}^2}{2}\Big) + \frac{\mu}{\rho} \abso{\lap\zeta}^2 + \frac{\mu+\lambda}{\rho} \abso{\nabla\dv\zeta}^2 = \dv (\cdots) + I_{15} + I_{16},
\end{equation}
where $ (\cdots) = \nabla\zeta \p_t\zeta - \frac{\mu+\lambda}{\rho} \dv\zeta \big(\lap\zeta-\nabla\dv\zeta \big) $ and
\begin{align*}
I_{15} & = \lap \zeta \cdot \Big[ \uv\dnab \zeta + \frac{1}{\rho} \nabla (p(\rho) - p(\rhot)) +  \zeta \cdot \nabla \uvt + \frac{\phi}{\rho} (\p_t \uvt + \uvt \cdot \nabla \uvt) \Big] \\
& \quad - \frac{\mu+\lambda}{\rho^2} \dv \zeta \nabla\rhot \cdot (\lap \zeta - \nabla\dv \zeta)  - \frac{1}{\rho} \lap \zeta \cdot (\gv_2 - g_1 \uvt),  \\
I_{16} & = - \frac{\mu+\lambda}{\rho^2} \dv\zeta \nabla\phi \cdot(\lap\zeta-\nabla\dv\zeta).
\end{align*}
By \cref{est-g,key1,bdd-z1}, one has that
\begin{align}
	\norm{I_{15}}_{L^1(\Omega)} & \lesssim \norm{\nabla^2 \zeta}_{L^2(\Omega)} \big( \norm{\nabla \zeta}_{L^2(\Omega)} + \norm{\phi}_{H^1(\Omega)} + \delta\norm{\zeta_1}_{L^2(\Omega)} + \e e^{-ct} \big) \notag \\
	& \lesssim \norm{\nabla^2 \zeta}_{L^2(\Omega)} \big( \norm{\p_1 \Phi, \p_1 \Psi_1}_{L^2(\R)} + \norm{\nabla\phi, \nabla \zeta}_{L^2(\Omega)} + \e e^{-ct} \big). \label{I-15}
\end{align}
And it follows from \cref{nu,Sobolev} that
\begin{align}
	\norm{I_{16}}_{L^1(\Omega)} & \lesssim \norm{\nabla \zeta}_{L^\infty(\Omega)} \norm{\nabla\phi }_{L^2(\Omega)} \norm{\nabla^2 \zeta}_{L^2(\Omega)} \notag \\
	& \lesssim \nu \norm{\nabla\phi }_{L^2(\Omega)} \norm{\nabla^2 \zeta}_{L^2(\Omega)} +  \nu \norm{\nabla^2 \zeta}_{L^2(\Omega)}^{\frac{3}{4}} \norm{\nabla\phi}_{L^2(\Omega)}^{\frac{1}{4}} \norm{\nabla^2 \zeta}_{L^2(\Omega)} \notag \\
	& \lesssim \nu \norm{\nabla^2 \zeta}_{L^2(\Omega)}^2 + \nu  \norm{\nabla \phi}_{L^2(\Omega)}^2 + \nu \norm{\nabla^3 \zeta}_{L^2(\Omega)}^2. \label{I-16}
\end{align}

Note that $ \norm{\lap \zeta}_{L^2(\Omega)}^2 \geq c_0 \norm{\nabla^2\zeta}_{L^2(\Omega)}^2 $ for some constant $ c_0>0. $ Then integrating \cref{eq7} over $ \Omega\times(0,T) $ and using \cref{I-15}, \cref{I-16} and Lemmas \ref{Lem-est1}-\ref{Lem-est5}, one can obtain  \cref{est6}.

\end{proof}


\begin{Lem}\label{Lem-est7}
Under the assumptions of \cref{Prop-apriori}, there exist $ \delta_0>0, \gamma_0>0 $ and $ \nu_0>0 $ such that if $ \delta \leq \delta_0 $, $ \e \leq \gamma_0 \delta $ and $ \nu \leq \nu_0, $ then
\begin{equation}\label{est7}
	\begin{aligned}
		& \sup_{t\in(0,T)} \norm{\nabla^2 \phi}^2_{L^2(\Omega)} + \int_0^T  \norm{\nabla^2 \phi}_{L^2(\Omega)}^2 dt \\
		& \qquad \lesssim \norm{ \Phi_0, \Psi_{01}}^2_{L^2(\R)} + \norm{\zeta_0}_{H^1(\Omega)}^2 + \norm{\phi_0}_{H^2(\Omega)}^2 + \nu \int_0^T \norm{\nabla^3 \zeta}_{L^2(\Omega)}^2 dt + \e.
	\end{aligned}
\end{equation}
\end{Lem}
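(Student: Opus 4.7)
The plan is to imitate the argument of \cref{Lem-est5} one derivative higher, producing an energy identity whose principal dissipative term is $\tfrac{p'(\rho)}{\rho}\abso{\nabla^2\phi}^2\gtrsim\abso{\nabla^2\phi}^2$ and whose coupling to top-order derivatives of $\zeta$ cancels exactly. Concretely, I first apply $\p_j\p_k$ to \cref{equ-phizeta}$_1$ and pair the resulting equation with $\p_j\p_k\phi/\rho^2$, summing over $j,k=1,2,3$; in the spirit of \cref{eq5} this yields
\begin{equation*}
\p_t\Big(\tfrac{\abso{\nabla^2\phi}^2}{2\rho^2}\Big) + \tfrac{1}{\rho}\,\nabla^2\phi:\nabla^2\dv\zeta = \dv(\cdots) + \mathcal{R}_1,
\end{equation*}
where $\mathcal{R}_1$ collects all the commutators generated by the variable coefficients $\rho,\rhot,\uv,\uvt$ together with $\nabla^2 g_1$. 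In parallel, I apply $\p_j$ to \cref{equ-phizeta}$_2$ and contract with $\p_j\nabla\phi/\rho$; rearranging the time-derivative term via \cref{fact1} and handling the rotational part of the viscous operator by the curl identity \cref{tool1} gives, as in \cref{eq6},
\begin{equation*}
\p_t\bigl(\nabla\zeta:\nabla^2\phi\bigr) + \tfrac{p'(\rho)}{\rho}\abso{\nabla^2\phi}^2 - \tfrac{\mut}{\rho}\,\nabla^2\phi:\nabla^2\dv\zeta = \dv(\cdots) + \mathcal{R}_2.
\end{equation*}
Multiplying the first identity by $\mut$ and adding it to the second cancels the cross term $\nabla^2\phi:\nabla^2\dv\zeta$, leaving the coercive evolution for $\mut\abso{\nabla^2\phi}^2/(2\rho^2)+\nabla\zeta:\nabla^2\phi$.

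The remainder $\mathcal{R}_1+\mathcal{R}_2$ is then controlled term-by-term exactly as in Lemmas \ref{Lem-est5} and \ref{Lem-est6}: the shock-profile and periodic coefficients $\nabla\rhot,\nabla^2\rhot,\nabla\uvt,\nabla^2\uvt,\p_t\uvt$ are of size $\delta+\e e^{-ct}$ by \cref{star,small-tilde}; the $L^2$ norms of $\phi,\zeta_1,\nabla\phi,\nabla\zeta,\nabla^2\zeta$ are absorbed into \cref{est1,est2,est3,est5,est6}, using \cref{key1,bdd-z1} to trade $\norm{\phi,\zeta_1}_{L^2}$ for $\norm{\p_1\Phi,\p_1\Psi_1}_{L^2(\R)}+\norm{\nabla\phi,\nabla\zeta}_{L^2(\Omega)}$; and the sources $g_1,\gv_2$ contribute the $\e$ tail via \cref{est-g}. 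The one genuinely top-order interaction that survives is the cubic $\tfrac{1}{\rho}\nabla\zeta\cdot\nabla^2\phi\cdot\nabla^2\phi$, produced after the curl decomposition (entirely analogous to $I_{16}$ in \cref{Lem-est6}); bounding $\norm{\nabla\zeta}_{L^\infty(\Omega)}$ by the Gagliardo--Nirenberg inequality \cref{Sobolev} and splitting the product by Young's inequality contributes exactly the residual $\nu\int_0^T\norm{\nabla^3\zeta}_{L^2(\Omega)}^2\,dt$ appearing on the right-hand side of \cref{est7}.

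The main obstacle is therefore bookkeeping rather than analysis: at second order the quasilinear coefficients generate many commutators, and one must integrate by parts via the divergence-form identities \cref{fact1,tool1,tool2} so that no further uncontrolled third derivative of $\zeta$ is exposed outside the single Young-inequality place above; in particular the term $\dv(\p_j\zeta)\,\p_t\p_j\phi$ coming from \cref{fact1} should be rewritten by substituting \cref{equ-phizeta}$_1$ so that only $\nabla^2\zeta$ and $\nabla\phi,\nabla\rhot,\nabla\uv,\nabla\uvt$ appear. Once this organization is carried out, integrating the combined identity over $\Omega\times(0,T)$ and invoking Lemmas \ref{Lem-est1}--\ref{Lem-est6} delivers \cref{est7}; the initial-data contribution $\norm{\phi_0}_{H^2(\Omega)}^2+\norm{\zeta_0}_{H^1(\Omega)}^2$ arises from the $t=0$ value of the auxiliary energy $\mut\abso{\nabla^2\phi}^2/(2\rho^2)+\nabla\zeta:\nabla^2\phi$ after a Cauchy--Schwarz step.
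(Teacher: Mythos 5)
Your proposal reproduces the paper's argument for Lemma~\ref{Lem-est7}: apply a second derivative to the continuity equation and pair with $\nabla\p_i\phi/\rho^2$, apply $\p_i$ to $\tfrac{1}{\rho}\cdot$\cref{equ-phizeta}$_2$ and pair with $\nabla\p_i\phi$, use the curl identity to isolate the term $\tfrac{\mut}{\rho}\nabla\p_i\phi\cdot\nabla\dv\p_i\zeta$ and cancel it against $\mut$ times the first identity, then close via Lemmas~\ref{Lem-est1}--\ref{Lem-est6}, \cref{key1,bdd-z1}, and the G--N estimates \cref{Sobolev,Sobolev1} to obtain the residual $\nu\int_0^T\norm{\nabla^3\zeta}_{L^2}^2\,dt$. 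The only minor imprecision is attributing that residual to a single cubic term $\nabla\zeta\cdot\nabla^2\phi\cdot\nabla^2\phi$; in the paper it also arises from terms of the form $\norm{\nabla^2\zeta}_{L^4}\norm{\nabla\phi}_{L^4}$ in both $I_{17}$ and $I_{18}$, but this does not affect the structure or validity of the argument.
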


\begin{proof}
Let $ i \in \{1,2,3\} $ be fixed.
By taking the second derivative $ \nabla \p_i  $ on \cref{equ-phizeta}$_1$ and multiplying the resulting equation by $ \cdot \frac{\nabla \p_i \phi}{\rho^2}, $
one can get that
\begin{equation}\label{eq8}
	\p_t \Big(\frac{\abso{\nabla \p_i   \phi}^2}{2\rho^2}\Big) + \frac{1}{\rho} \nabla \p_i  \dv\zeta \cdot \nabla \p_i\phi  = \dv \Big(-\frac{\abso{\nabla\p_i\phi }^2 }{2\rho^2} \uv \Big) + I_{17},
\end{equation}
where
\begin{equation*}
	\begin{aligned}
		I_{17} & =  - \frac{1}{\rho^2} \nabla \p_i \phi \cdot \big[ \nabla \p_i (\rho \dv \zeta ) - \rho \nabla \p_i \dv \zeta + \nabla\p_i (\uv \cdot\nabla\phi) - (\uv \cdot \nabla) \nabla \p_i \phi \\
		& \qquad\qquad\qquad\quad +  \nabla \p_i ( \dv \uvt \phi ) +  \nabla\p_i (\nabla\rhot \cdot \zeta) - \frac{3}{2} \dv \uv \nabla\p_i \phi - \nabla\p_i g_1 \big].
	\end{aligned}
\end{equation*}
Using \cref{est-g,star}, one can verify that
\begin{align*}
	\norm{I_{17}}_{L^1(\Omega)} & \lesssim \norm{\nabla^2 \phi}_{L^2(\Omega)} \big[ (\delta+\e) \big(\norm{\nabla\zeta}_{H^1(\Omega)} + \norm{\phi}_{H^2(\Omega)}\big) + \norm{\nabla\zeta}_{L^\infty(\Omega)} \norm{\nabla^2 \phi}_{L^2(\Omega)} \\
	& \qquad \qquad\qquad + \norm{\nabla^2 \zeta}_{L^4(\Omega)} \norm{\nabla \phi}_{L^4(\Omega)} + \delta \norm{\zeta_1}_{H^2(\Omega)} + \e e^{-ct} \norm{\zeta}_{H^2(\Omega)} + \e e^{-ct} \big].
\end{align*}
It follows from \cref{Lem-GN} and \cref{nu} that
\begin{equation}\label{Sobolev1}
	\begin{aligned}
		\norm{\nabla\phi }_{L^4(\Omega)} & \lesssim \sum_{k=1}^3 \norm{\nabla^2\phi }_{L^2(\Omega)}^{\frac{k}{4}} \norm{\nabla\phi }_{L^2(\Omega)}^{1-\frac{k}{4}} \lesssim 
		\norm{\nabla\phi }_{H^1(\Omega)}, \\
		\norm{\nabla^2 \zeta}_{L^4(\Omega)} & \lesssim \sum_{k=1}^3 \norm{\nabla^3 \zeta}_{L^2(\Omega)}^{\frac{k}{4}} \norm{\nabla^2\zeta}_{L^2(\Omega)}^{1-\frac{k}{4}} \lesssim \nu + \nu^{\frac{1}{4}} \norm{\nabla^3 \zeta}_{L^2(\Omega)}^{\frac{3}{4}}.
	\end{aligned}
\end{equation}
Then by \cref{Sobolev,Sobolev1,bdd-z1,key1}, one can get that
\begin{equation}\label{I-17}
	\begin{aligned}
		\norm{I_{17}}_{L^1(\Omega)} & \lesssim \norm{\p_1 \Phi, \p_1 \Psi_1}_{L^2(\R)}^2 +  \norm{\nabla\zeta}_{H^1(\Omega)}^2 + \norm{\nabla\phi}_{L^2(\Omega)}^2 \\
		& \quad + (\delta+\e+\nu) \norm{\nabla^2 \phi}_{L^2(\Omega)}^2 + \nu \norm{\nabla^3 \zeta}_{L^2(\Omega)}^2 + \e e^{-ct}.
	\end{aligned}
\end{equation}

On the other hand, similar to \cref{tool1}, one can verify that
\begin{align*}
	& \frac{1}{\rho} \nabla \p_i \phi \cdot \left[ \mu \lap \p_i \zeta + (\mu+\lambda) \nabla\dv \p_i \zeta \right] \notag \\
	& \qquad = \frac{\mut}{\rho} \nabla \p_i\phi  \cdot \nabla\dv \p_i \zeta + \dv \Big(\frac{\mu  \nabla \p_i \phi \times \text{curl} \p_i \zeta }{\rho}  \Big) + \frac{\mu \nabla \rho \cdot \big(\nabla \p_i \phi \times \text{curl} \p_i \zeta \big) }{\rho^2}.
\end{align*} 
Then by taking the derivative $ \p_i  $ on $ \frac{1}{\rho}\cdot$\cref{equ-phizeta}$ _2 $ and multiplying the resulting equation by $ \cdot \nabla \p_i \phi, $ one can get that
\begin{equation}\label{eq9}
	\begin{aligned}
		& \p_t\big( \nabla\p_i\phi  \cdot \p_i \zeta \big) + \frac{p'(\rho)}{\rho} \abso{\nabla\p_i \phi}^2 - \frac{\mut}{\rho} \nabla \p_i\phi  \cdot \nabla \dv \p_i \zeta = \dv (\cdots) + I_{18},
	\end{aligned}
\end{equation}
where $ (\cdots) = \p_i \p_t \phi \p_i \zeta + \frac{\mu}{\rho} \nabla\p_i   \phi\times \text{curl}\p_i \zeta $ and
\begin{align*}
	I_{18} & =  - \nabla \p_i \phi \cdot \Big[ \p_i (\uv \cdot \nabla\zeta)   + \p_i \Big( \frac{p'(\rho)}{\rho} \Big) \nabla\phi + \p_i \Big( \frac{p'(\rho)-p'(\rhot)}{\rho} \nabla \rhot \Big)  + \p_i (\zeta \dnab \uvt)  \\
	& \qquad\qquad\qquad + \p_i \Big( \frac{\phi}{\rho}  \left(\p_t \uvt + \uvt \dnab \uvt\right) \Big) + \frac{\p_i \rho}{\rho^2} \big( \mu \lap \zeta + (\mu+\lambda) \nabla \dv \zeta \big) \\
	&  \qquad\qquad\qquad - \p_i \Big( \frac{\gv_2 - g_1 \uvt}{\rho}\Big) \Big] +  \frac{\mu }{\rho^2} \nabla\rho \cdot \left(\nabla \p_i\phi \times \text{curl} \p_i \zeta\right)  - \dv\p_i \zeta \p_i \p_t \phi.
\end{align*} 
It is direct to prove that
\begin{align}
	\norm{I_{18}}_{L^1(\Omega)} & \lesssim \norm{\nabla^2 \phi}_{L^2(\Omega)} \big[ (\delta+\e+\nu) \norm{\nabla\zeta, \phi}_{H^1(\Omega)} + \norm{\nabla\zeta}_{L^\infty(\Omega)} \norm{\nabla\zeta}_{L^2(\Omega)} \notag \\
	& \qquad\quad + \norm{\nabla\phi}_{L^4(\Omega)}^2 + \delta\norm{\zeta_1}_{H^1(\Omega)} + \e e^{-ct} \norm{\zeta}_{H^1(\Omega)} \notag \\
	& \qquad \quad  + \norm{\nabla\phi}_{L^4(\Omega)}\norm{\nabla^2\zeta}_{L^4(\Omega)} + \e e^{-ct} \big] + \norm{\nabla\p_t\phi}_{L^2(\Omega)}\norm{\nabla^2\zeta}_{L^2(\Omega)}. \label{I-18-1}
\end{align}
It follows from \cref{equ-phizeta}$ _1 $ that 
\begin{align}
	\norm{\nabla\p_t \phi}_{L^2(\Omega)} & \lesssim \norm{\nabla\zeta}_{H^1(\Omega)} + \norm{\nabla\zeta}_{L^\infty(\Omega)} \norm{\nabla\phi}_{L^2(\Omega)} + (\delta+\e) \norm{\phi,\zeta_1}_{H^1(\Omega)} \notag \\
	& \quad + (\delta+\e+\nu) \norm{\nabla^2\phi}_{L^2(\Omega)} + \e e^{-ct} \norm{\zeta}_{H^1(\Omega)} + \e e^{-ct}. \label{I-18-2}
\end{align}
Then combining \cref{I-18-1,I-18-2} and using \cref{Sobolev,Sobolev1,nu}, one can get that
\begin{equation}\label{I-18}
	\begin{aligned}
	\norm{I_{18}}_{L^1(\Omega)} & \lesssim \norm{\p_1 \Phi, \p_1 \Psi_1}_{L^2(\R)}^2 + \norm{\nabla\zeta}_{H^1(\Omega)}^2 + \norm{\nabla\phi}_{L^2(\Omega)}^2  + \e e^{-ct} \\
	& \quad  + (\delta+\e+\nu) \norm{\nabla^2 \phi}_{L^2(\Omega)}^2 + \nu \norm{\nabla^3 \zeta}_{L^2(\Omega)}^2. 
\end{aligned}
\end{equation}
Here we have used the fact that $ \norm{\nabla\phi}_{L^4(\Omega)} \lesssim \norm{\nabla\phi}_{H^1(\Omega)} \lesssim \nu. $

Collecting \cref{I-17}, \cref{I-18} and using Lemmas \ref{Lem-est1}-\ref{Lem-est6}, one can add up the summation, $ \mut\cdot $\cref{eq8} $ + $ \cref{eq9}, with respect to $ i $ from $ 1 $ to $ 3 $, and integrate the resulting equation over $ \Omega\times (0,T) $ to obtain \cref{est7}.
%

\end{proof}

\begin{Lem}\label{Lem-est8}
Under the assumptions of \cref{Prop-apriori}, there exist $ \delta_0>0, \gamma_0>0 $ and $ \nu_0>0 $ such that if $ \delta \leq \delta_0 $, $ \e \leq \gamma_0 \delta $ and $ \nu \leq \nu_0, $ then
\begin{equation}\label{ineq-est8}
	\begin{aligned}
		&  \sup_{t\in(0,T)} \norm{\nabla^2 \zeta}^2_{L^2(\Omega)} + \int_0^T  \norm{\nabla^3 \zeta}_{L^2(\Omega)}^2 dt \lesssim \norm{\Phi, \Psi_{01}}^2_{L^2(\R)} + \norm{\phi_0, \zeta_0}_{H^2(\Omega)}^2 + \e.
	\end{aligned}
\end{equation}	
\end{Lem}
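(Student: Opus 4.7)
The plan is to carry out an energy estimate on \cref{equ-phizeta}$_2$ one derivative higher than in \cref{Lem-est6}. For each $j\in\{1,2,3\}$, I would apply $\p_j$ to \cref{equ-phizeta}$_2$ and test the resulting equation against $-\frac{1}{\rho}\lap\p_j\zeta$, then sum in $j$ and integrate over $\Omega$. Using an identity analogous to \cref{tool2} with $\zeta$ replaced by $\p_j\zeta$, the principal part yields
\begin{equation*}
	\frac{d}{dt}\int_\Omega \frac{\abso{\nabla^2\zeta}^2}{2}\,dx + \sum_{j=1}^3 \int_\Omega \Big(\frac{\mu}{\rho}\abso{\lap\p_j\zeta}^2 + \frac{\mu+\lambda}{\rho}\abso{\nabla\dv\p_j\zeta}^2\Big)\,dx = \mathcal{R},
\end{equation*}
and the dissipation on the left is bounded below by $c\norm{\nabla^3\zeta}_{L^2(\Omega)}^2$ by elliptic regularity for $-\lap$ on $\Omega=\R\times\Torus^2$.

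The remainder $\mathcal{R}$ collects commutator and nonlinear terms generated by $\p_j$ applied to the convection $\uv\cdot\nabla\zeta$, the pressure $\rho^{-1}\nabla(p(\rho)-p(\rhot))$, the background couplings $\zeta\cdot\nabla\uvt$ and $\phi\rho^{-1}(\p_t\uvt+\uvt\cdot\nabla\uvt)$, the source $(\gv_2-g_1\uvt)/\rho$, and the term involving $\nabla\rho$ that arises from moving $\rho^{-1}$ across $\p_j$ (cf.\ the derivation in \cref{Lem-est6}). Every term in $\mathcal{R}$ is to be estimated by a combination of Cauchy's inequality, the Gagliardo--Nirenberg type inequality \cref{Lem-GN}, and the a priori smallness \cref{nu}. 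The two delicate contributions are: (i) the pressure term, whose $L^1(\Omega)$ norm is bounded by $\eta\norm{\nabla^3\zeta}_{L^2(\Omega)}^2 + C_\eta\norm{\nabla^2\phi}_{L^2(\Omega)}^2$ for small $\eta>0$; and (ii) the cubic convection, where the interpolations $\norm{\nabla\zeta}_{L^\infty(\Omega)}\lesssim \nu + \nu^{1/4}\norm{\nabla^3\zeta}_{L^2(\Omega)}^{3/4}$ and $\norm{\nabla^2\zeta}_{L^4(\Omega)}\lesssim \nu + \nu^{1/4}\norm{\nabla^3\zeta}_{L^2(\Omega)}^{3/4}$ (cf.\ \cref{Sobolev,Sobolev1}), together with Young's inequality, produce contributions of the form $\nu\norm{\nabla^3\zeta}_{L^2(\Omega)}^2$ plus lower-order terms already controlled by \cref{Lem-est1,Lem-est2,Lem-est3,Lem-est4,Lem-est5,Lem-est6,Lem-est7}.

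Integrating in time over $(0,T)$ and invoking the previous lemmas to absorb the lower-order contributions, one arrives at an inequality of the form $\sup_{t\in(0,T)}\norm{\nabla^2\zeta}_{L^2(\Omega)}^2 + \int_0^T\norm{\nabla^3\zeta}_{L^2(\Omega)}^2\,dt \lesssim \norm{\Phi_0,\Psi_{01}}_{L^2(\R)}^2 + \norm{\phi_0,\zeta_0}_{H^2(\Omega)}^2 + \e + (\delta+\e+\nu)\int_0^T\norm{\nabla^3\zeta}_{L^2(\Omega)}^2\,dt$. Choosing $\delta_0,\gamma_0,\nu_0$ small enough, the last term is absorbed into the left-hand side, yielding \cref{ineq-est8}. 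The hard part will be the careful bookkeeping of the numerous commutator terms spawned by $\p_j$: each occurrence of $\norm{\nabla^3\zeta}_{L^2(\Omega)}^2$ on the right must be shown to carry either a small prefactor originating from $\delta+\e+\nu$ or to be absorbable via a Cauchy inequality with small parameter against the dissipation on the left. This ultimately relies on the Gagliardo--Nirenberg decomposition of \cref{Lem-GN} adapted to the mixed geometry $\R\times\Torus^2$, since the standard 3D inequalities are not available for functions periodic only in $\xp$.
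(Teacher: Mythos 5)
Your plan coincides with the paper's proof in all essentials: differentiate the velocity equation once in each direction $i$, test against the weighted Laplacian $-\rho^{-1}\lap\p_i\zeta$, use the analogue of \cref{tool2} with $\zeta$ replaced by $\p_i\zeta$ to extract the dissipation $\frac{\mu}{\rho}\abso{\lap\p_i\zeta}^2 + \frac{\mu+\lambda}{\rho}\abso{\nabla\dv\p_i\zeta}^2 \gtrsim \abso{\nabla^3\zeta}^2$, bound the commutator and nonlinear remainder using \cref{Lem-GN}, the interpolations \cref{Sobolev} and \cref{Sobolev1}, and Lemmas \ref{Lem-est1}--\ref{Lem-est7}, and absorb the $\nu\norm{\nabla^3\zeta}_{L^2(\Omega)}^2$ contribution into the left side. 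The only cosmetic difference is the order of operations: the paper takes $\p_i$ of $\frac{1}{\rho}\cdot$\cref{equ-phizeta}$_2$ and then tests against $-\lap\p_i\zeta$, which avoids the extra commutator $\frac{\p_i\rho}{\rho}\,\p_t\zeta\cdot\lap\p_i\zeta$ that your ordering ($\p_j$ first, $\rho^{-1}$-weighted test function second) would generate; that term is of size $(\delta+\e+\nu)\norm{\p_t\zeta}_{L^2(\Omega)}\norm{\nabla^3\zeta}_{L^2(\Omega)}$ and equally absorbable, so either ordering yields \cref{ineq-est8}.
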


\begin{proof}
For fixed $ i \in \{1,2,3\}, $ by taking the derivative $ \p_i  $ on $ \frac{1}{\rho}\cdot $\cref{equ-phizeta}$ _2 $ and multiplying the resulting equation by $ \cdot(- \lap \p_i \zeta), $ one has that
\begin{equation}\label{eq-15}
	\frac{1}{2} \p_t \big( \abso{\nabla\p_i  \zeta}^2 \big) + \frac{\mu}{\rho} \abso{ \lap\p_i\zeta}^2 + \frac{\mu+\lambda}{\rho} \abso{\nabla\dv \p_i \zeta}^2 = \dv (\cdots) +  I_{19},
\end{equation}
where $ (\cdots) = \nabla\p_i  \zeta \p_t \p_i  \zeta - \frac{\mu+\lambda}{\rho} \dv \p_i\zeta \big( \lap \p_i \zeta - \nabla \dv \p_i \zeta \big) $ and
\begin{align*}
I_{19} & = \lap \p_i \zeta \cdot \p_i \Big[ \uv\cdot \nabla\zeta + \frac{p'(\rho)}{\rho} \nabla\phi  + \frac{p'(\rho)-p'(\rhot)}{\rho} \nabla\rhot  + \zeta \cdot \nabla\uvt + \frac{\phi}{\rho} \left(\p_t \uvt + \uvt \dnab \uvt\right) \Big] \\
& \quad + \lap \p_i \zeta \cdot \frac{\p_i \rho}{\rho^2} \big[ \mu \lap\zeta + (\mu+\lambda) \nabla \dv \zeta \big] - \lap \p_i \zeta \cdot \p_i \Big( \frac{\gv_2 - g_1 \uvt}{\rho} \Big) \\
& \quad -\frac{\mu+\lambda}{\rho^2} \dv\p_i \zeta \nabla \rho \cdot (\lap \p_i\zeta - \nabla \dv \p_i\zeta).
\end{align*}
Here we have used the fact that \cref{tool2} is still true when $ \zeta $ is replaced by $ \p_i \zeta. $

Using \cref{star,bdd-z1,est-g,key1,Sobolev1}, one can prove that
\begin{align}
	\norm{I_{19}}_{L^1(\Omega)} & \lesssim \norm{\nabla^3 \zeta}_{L^2(\Omega)} \big[ (\delta+\e+\nu) \norm{\nabla\zeta}_{H^1(\Omega)} + \norm{\nabla\phi}_{L^4(\Omega)}^2 + \norm{\phi}_{H^2(\Omega)} \notag \\
	& \qquad + \delta\norm{\zeta_1}_{H^1(\Omega)} + \e^{-ct} \norm{\zeta}_{H^1(\Omega)} + \norm{\nabla\phi}_{L^4(\Omega)} \norm{\nabla^2 \zeta}_{L^4(\Omega)}  + \e e^{-ct} \big] \notag \\
	& \lesssim \norm{\nabla^3 \zeta}_{L^2(\Omega)} \big[ \norm{\nabla\zeta,\nabla\phi}_{H^1(\Omega)} + \norm{\p_1 \Phi, \p_1 \Psi_1}_{L^2(\R)} \notag \\
	& \qquad + \nu \norm{\nabla^3 \zeta}_{L^2(\Omega)} + \e e^{-ct} \big]. \label{I-19}
\end{align}

Note that $ \norm{\lap \p_i\zeta}_{L^2(\Omega)}^2 \geq c_0 \norm{\nabla^2 \p_i  \zeta}_{L^2(\Omega)}^2 $ for some constant $ c_0>0 $. 
Then using \cref{I-19} and Lemmas \ref{Lem-est1} to \ref{Lem-est7}, one can integrate \cref{eq-15} over $ \Omega \times (0,T) $ and add up the resulting equations with respect to $ i $ from $ i=1 $ to $ 3 $ to obtain 
\cref{ineq-est8}.

\end{proof}

\begin{Lem}\label{Lem-est9}
	Under the assumptions of \cref{Prop-apriori}, there exist $ \delta_0>0, \gamma_0>0 $ and $ \nu_0>0 $ such that if $ \delta \leq \delta_0 $, $ \e \leq \gamma_0 \delta $ and $ \nu \leq \nu_0, $ then
	\begin{equation}\label{ineq-est9}
		\begin{aligned}
			& \sup_{t\in(0,T)} \norm{\nabla^3\phi, \nabla^3\zeta }_{L^2(\Omega)}^2 + \int_0^T \norm{\nabla^3 \phi, \nabla^4 \zeta}_{L^2(\Omega)}^2  dt \\
			& \qquad \lesssim \norm{\Phi_0, \Psi_{01}}^2_{L^2(\R)} + \norm{\phi_0, \zeta_0}_{H^3(\Omega)}^2 + \e.
		\end{aligned}
	\end{equation}	
\end{Lem}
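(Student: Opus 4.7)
The plan is to repeat, one derivative higher, the two mechanisms already used in \cref{Lem-est7,Lem-est8}. Concretely, let $i,j\in\{1,2,3\}$ be fixed and introduce the multi-index derivative $D_{ij}:=\p_i\p_j$. To produce dissipation for $\nabla^3\phi$, I would apply $\nabla D_{ij}$ to \cref{equ-phizeta}$_1$ and test with $\frac{\nabla D_{ij}\phi}{\rho^2}$, obtaining a clean structure
\begin{equation*}
\p_t\Big(\frac{|\nabla D_{ij}\phi|^2}{2\rho^2}\Big)+\frac{1}{\rho}\nabla D_{ij}\dv\zeta\cdot \nabla D_{ij}\phi=\dv(\cdots)+\mathcal{R}_\phi,
\end{equation*}
where $\mathcal{R}_\phi$ collects commutator and lower-order terms. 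In parallel, I would apply $D_{ij}$ to $\frac{1}{\rho}\cdot$\cref{equ-phizeta}$_2$, test with $\nabla D_{ij}\phi$, and use the identity analogous to \cref{tool1} to get
\begin{equation*}
\p_t\big(D_{ij}\zeta\cdot\nabla D_{ij}\phi\big)+\frac{p'(\rho)}{\rho}|\nabla D_{ij}\phi|^2-\frac{\mut}{\rho}\nabla D_{ij}\phi\cdot\nabla D_{ij}\dv\zeta=\dv(\cdots)+\mathcal{R}_{\phi\zeta}.
\end{equation*}
Forming $\mut\cdot(\text{first})+(\text{second})$ cancels the bad coupling $\nabla D_{ij}\dv\zeta\cdot\nabla D_{ij}\phi$ and delivers the desired dissipation $\tfrac{p'(\rho)}{\rho}|\nabla^3\phi|^2$, exactly in the spirit of \cref{Lem-est7}. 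Summing over $i,j$ and integrating over $\Omega\times(0,T)$ will give the $\nabla^3\phi$ part of \cref{ineq-est9}.

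For the velocity part, I would apply $D_{ij}$ to $\frac{1}{\rho}\cdot$\cref{equ-phizeta}$_2$ and test with $-\lap D_{ij}\zeta$, exactly as in \cref{Lem-est8} but one derivative higher, producing
\begin{equation*}
\tfrac{1}{2}\p_t\big(|\nabla D_{ij}\zeta|^2\big)+\tfrac{\mu}{\rho}|\lap D_{ij}\zeta|^2+\tfrac{\mu+\lambda}{\rho}|\nabla\dv D_{ij}\zeta|^2=\dv(\cdots)+\mathcal{R}_\zeta,
\end{equation*}
which after summing in $i,j$ yields the dissipation $\int_0^T\!\|\nabla^4\zeta\|_{L^2(\Omega)}^2\,dt$ via the standard estimate $\|\lap v\|_{L^2}^2\gtrsim\|\nabla^2 v\|_{L^2}^2$.

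The main obstacle will be controlling the remainders $\mathcal R_\phi$, $\mathcal R_{\phi\zeta}$, $\mathcal R_\zeta$, since at the $H^3$ level the Leibniz expansion of terms like $\uv\cdot\nabla\zeta$, $\rho\,\dv\zeta$, $\tfrac{p'(\rho)}{\rho}\nabla\phi$, $\tfrac{\phi}{\rho}(\p_t\uvt+\uvt\dnab\uvt)$ and $\tfrac{\gv_2-g_1\uvt}{\rho}$ generates products that mix derivatives unevenly: typical worst-case terms are $\nabla^3\zeta\cdot\nabla\phi$, $\nabla^2\zeta\cdot\nabla^2\phi$, $\nabla^3\phi\cdot\nabla\zeta$, together with triple products $\nabla\phi\,\nabla\phi\,\nabla^2\phi$ and $\nabla\phi\,\nabla^2\zeta\,\nabla^2\phi$. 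These will be bounded in $L^1(\Omega)$ by the $L^2$--$L^4$--$L^\infty$ trick combined with the G-N-type inequalities of \cref{Lem-GN} in the form
\begin{equation*}
\|\nabla^2 f\|_{L^4(\Omega)}\lesssim\|\nabla^3 f\|_{L^2(\Omega)}^{3/4}\|\nabla f\|_{L^2(\Omega)}^{1/4}+\|\nabla^3 f\|_{L^2(\Omega)}+\nu,\qquad \|\nabla f\|_{L^\infty(\Omega)}\lesssim\nu+\nu^{1/4}\|\nabla^3 f\|_{L^2(\Omega)}^{3/4},
\end{equation*}
so that, after Young's inequality, every cubic term is absorbed into a small multiple of $\|\nabla^3\phi\|_{L^2}^2+\|\nabla^4\zeta\|_{L^2}^2$ plus lower-order quantities already controlled by \cref{Lem-est1} through \cref{Lem-est8}. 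In addition, the time derivative $\nabla^2\p_t\phi$ that appears through $\dv D_{ij}\zeta\,D_{ij}\p_t\phi$ will be replaced, via \cref{equ-phizeta}$_1$, by quantities involving $\nabla^3\zeta$, $\nabla^3\phi$, $\phi$, $\zeta_1$ and the exponentially decaying source $g_1$, so the Lagrangian-type cancellation used one level below still closes.

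Finally, the source terms are handled with \cref{Lem-F}: $\|\nabla^3 g_1\|_{L^2}+\|\nabla^2\gv_2\|_{L^2}\lesssim \e\delta^{1/2}e^{-ct}$ produces an integrable contribution of size $\e$. Collecting everything, choosing $\delta_0$, $\gamma_0$ and $\nu_0$ small enough to absorb all terms of the form $(\delta+\e+\nu)(\|\nabla^3\phi\|_{L^2}^2+\|\nabla^4\zeta\|_{L^2}^2)$ into the left-hand side, and adding the result to the bounds of Lemmas \ref{Lem-est1}--\ref{Lem-est8} yields \cref{ineq-est9}.
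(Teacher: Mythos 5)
Your proposal is correct and takes essentially the same route the paper intends: the paper states only that the proof of \cref{Lem-est9} is ``similar to that of Lemmas \ref{Lem-est7} and \ref{Lem-est8} and is omitted for brevity,'' and you fill in that omission faithfully by repeating, one derivative higher, the two testing mechanisms (the $\nabla D_{ij}$-of-mass paired with $D_{ij}$-of-momentum against $\nabla D_{ij}\phi$ to extract $|\nabla^3\phi|^2$ dissipation, and $D_{ij}$-of-momentum against $-\lap D_{ij}\zeta$ to extract $\|\nabla^4\zeta\|_{L^2}^2$), then closing with the same G-N interpolations, the replacement of $\nabla^2\p_t\phi$ via the mass equation, and the source bounds from \cref{Lem-F}. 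The only small slip is in your first displayed interpolation inequality, where $\|\nabla f\|_{L^2}^{1/4}$ should read $\|\nabla^2 f\|_{L^2}^{1/4}$ to match the exponent bookkeeping used in \cref{Sobolev1}; this does not affect the argument since both versions are controlled by $\nu$ at the $H^3$ level.
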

\begin{proof}
	The proof is similar to that of Lemmas \ref{Lem-est7} and \ref{Lem-est8} and is omitted for brevity.
\end{proof}

\vspace{.3cm}

\section{Local existence}\label{Sec-local}

We now prove the local existence, \cref{Thm-local}.

\begin{proof}[Proof of \cref{Thm-local}]
	For the problem \cref{equ-phizeta}, \cref{ic-pertur}, it is standard (see \cite[Theorem 5.2]{MN1980} for instance) to prove the local existence in time of the solution $ (\phi,\zeta) $, which is periodic in the transverse directions $ \xp\in \Torus^2 $ and satisfies that $ (\phi, \zeta) \in C(0,T_0;H^3(\Omega)) $ and $ \nabla\zeta \in L^2(0,T_0; H^3(\Omega)) $  for some constant $ T_0>0. $
	
	Denote 
	\begin{align*}
		(\rho, \uv) = (\rhot,\uvt) + (\phi,\zeta) \quad \text{and} \quad \psi := \rho \uv - \rhot \uvt = \rho\zeta+\uvt \phi.
	\end{align*}
	Note that $ (\phi, \psi) \in C(0,T_0; H^3(\Omega)) $ is the classical solution of the Cauchy problem \cref{equ-phipsi}, \cref{ic-phipsi}.
	Thus, to complete the proof of \cref{Thm-local}, it remains to show that the anti-derivative,
	\begin{align*}
		(\Phi,\Psi_1)(x_1,t) := \int_{-\infty}^{x_1} \int_{\Torus^2} (\phi,\psi_1)(y_1,\xp,t) d\xp dy_1, \quad x_1\in\R, t>0,
	\end{align*}
	exits and belongs to the $ C(0,T_0; H^4(\R)) $ space.	
	
	In fact, it follows from the Minkowski inequality that the zero mode of $ (\phi,\psi_1) $,
	\begin{equation}\label{regu-od}
		\big(\phi^{\od}, \psi_1^{\od}\big)(x_1,t) = \int_{\Torus^2} (\phi, \psi_1)(x,t) d\xp,
	\end{equation}
	belongs to the $ C\left(0,T_0; H^3(\R)\right) $ space.
	The first two equations of \cref{equ-od} can be rewritten as
	\begin{equation}\label{equ-linear}
		\begin{cases}
			\p_t \phi^{\od} + \p_1 \psi_1^{\od} = g_1^{\od}, \\
			\p_t \psi_1^{\od} - \mut \p_1 \Big(\frac{\p_1 \psi_1^{\od}}{\rhot^s} \Big) = \p_1 J + g_{2,1}^{\od}, 
		\end{cases} \quad x_1 \in\R, t>0,
	\end{equation}
	where 
	\begin{align*}
		J = \mut \int_{\Torus^2} \Big( \p_1 \zeta_1 - \frac{\p_1 \psi_1}{\rhot^s}  \Big) d\xp - \int_{\Torus^2} \Big[\Big( \frac{m_1^2}{\rho} - \frac{\mt_1^2}{\rhot} \Big) + \big( p(\rho) - p(\rhot) \big) \Big] d\xp.
	\end{align*}
	Note that
	\begin{align*}
		\zeta_1 - \frac{\psi_1}{\rhot^s} = - \frac{1}{\rho\rhot} \phi \psi_1 - \frac{\rhot-\rhot^s}{\rhot\rhot^s} \psi_1 - \frac{\ut_1}{\rho} \phi.
	\end{align*}
	Then by the fact that $ \phi,\psi_1, \zeta_1 \in C(0,T_0; H^3(\Omega)) $, one has that $ J \in C(0,T_0; H^2(\R)). $
	Moreover, it follows from the assumption 2) in \cref{Thm}, \cref{Lem-F} and the Minkowski inequality that the anti-derivative variables,
	\begin{align*}
		(\Phi_0,\Psi_{01})(x_1) = \int_{-\infty}^{x_1} \big(\phi_0^{\od},\psi_{01}^{\od}\big)(y_1) dy_1,
	\end{align*}
	and
	$$  (G_1, G_2)(x_1,t) = \int_{-\infty}^{x_1} (g_1^{\od}, g_{2,1}^{\od})(y_1,t) dy_1, $$
	belong to the $ H^4(\R) $ and $  C(0,+\infty; H^3(\R)) $ spaces, respectively.
	
	Consider the Cauchy problem for a linear parabolic equation,
	\begin{equation}\label{equ-eta}
		\begin{cases}
			\p_t \hat{\Psi_1} - \frac{\mut}{\rhot^s} \p_1^2 \hat{\Psi_1} = J + G_2, \qquad x_1 \in \R, t>0, \\
			\hat{\Psi_1}(x_1,0) = \Psi_{01}(x_1), \qquad\qquad x_1\in\R.
		\end{cases}
	\end{equation}
	It follows from the classical parabolic theory that \cref{equ-eta} admits a unique classical solution $ \hat{\Psi_1} \in C(0,T_0; H^3(\R)). $ By comparing \cref{equ-linear}$ _2 $ and \cref{equ-eta}, one has that $ \p_1 \hat{\Psi_1} $ and $ \psi_1^{\od} $ solve the same parabolic equation and share the same initial data.
	Thus, the classical uniqueness theory yields that $  \p_1 \hat{\Psi_1} \equiv \psi_1^{\od}, $ which implies that 
	$$ \hat{\Psi_1}(x_1,t) \equiv \int_{-\infty}^{x_1} \psi_1^\od(y_1,t) dy_1 := \Psi_1(x_1,t) \qquad \forall x_1\in\R, t\geq 0.  $$ 
	Thus, $ \Psi_1 $ belongs to the $ C(0,T_0; H^3(\R)) $ space.
	
	Similarly, for the given $ \hat{\Psi_1} \in C(0,T_0; H^3(\Omega)), $ the Cauchy problem for a linear hyperbolic equation
	\begin{equation}\label{equ-Phi}
		\begin{cases}
			\p_t \hat{\Phi} = G_1 - \p_1 \hat{\Psi}_1, \qquad \ x_1\in\R, t>0, \\
			\hat{\Phi}(x_1,0) = \Phi_0(x_1), \qquad x_1\in\R,
		\end{cases}
	\end{equation} 
	admits a unique classical solution,
	$$ \hat{\Phi}(x_1,t) = \Phi_0(x_1) + \int_0^t (G_1- \p_1 \hat{\Psi}_1)(x_1,\tau) d\tau, $$
	which belongs to the $ C(0,T_0; H^2(\R)) $ space.
	Then by comparing \cref{equ-linear}$ _1 $ and \cref{equ-Phi}, and using the classical uniqueness theory for the linear hyperbolic equations, one can get that $ \phi^{\od} \equiv \p_1 \hat{\Phi} $, which implies that $ \Phi \equiv \hat{\Phi} $. Thus, it holds that $ \Phi \in C(0,T_0; H^2(\R)). $ 
	
	At last, note that
	\begin{align*}
		\norm{\nabla\Phi, \nabla\Psi_1}_{H^3(\R)} = \norm{\phi^\od, \psi_1^\od}_{H^3(\R)} \lesssim \norm{\phi,\psi_1}_{H^3(\Omega)}.
	\end{align*}
	Then one has that $ (\Phi,\Psi_1) \in C(0,T_0; H^4(\R)), $ which completes the proof of \cref{Thm-local}.
\end{proof}

\vspace{.2cm}

\section{Time-asymptotic stability}\label{Sec-stability}

Combining \cref{Prop-apriori} and \cref{Thm-local}, one can use a standard continuation argument to prove that the Cauchy problem \cref{equ-phizeta}, \cref{ic-pertur} admits a global-in-time solution $ (\phi,\zeta) $, which belongs to the $ \mathbb{B}(0,+\infty) $ space.
Then it remains to prove the time-asymptotic behaviors \cref{asymp,exp-pert} to complete the proof of \cref{Thm-pertur}. And the proof of the main result, \cref{Thm}, is completed at the end of this section.

\vspace{.2cm}

Since \cref{Prop-apriori} holds true for $ T=+\infty $, then \cref{est-apriori} yields that
\begin{align}
	\sup_{t>0} \norm{\phi, \zeta}_{W^{1,\infty}(\Omega)} \lesssim \sup_{t>0} \norm{\phi, \zeta}_{H^3(\Omega)} \lesssim \nu_0+\e_0. \label{nu-1}
	 \notag
\end{align}
and 
\begin{align*}
	\int_0^{+\infty} \norm{\phi, \nabla \zeta}_{H^3(\Omega)}^2 dt \lesssim \nu_0 + \e_0.
\end{align*}
It follows from \cref{Lem-GN} that one can decompose $ (\phi, \zeta) $ as 
\begin{align*}
	(\phi, \zeta) = \sum_{k=1}^3 (\phi^{(k)}, \zeta^{(k)}),
\end{align*}
where each $ (\phi^{(k)}, \zeta^{(k)}) $ satisfies the $ k $-dimensional G-N inequalities. Thus, it holds that
\begin{equation}\label{gn-1-2}
	\begin{aligned}
	\norm{\phi^{(1)},\zeta^{(1)}}_{L^\infty(\Omega))} & \lesssim \norm{\nabla (\phi^{(1)},\zeta^{(1)})}_{L^2(\Omega)}^{\frac{1}{2}} \norm{\phi^{(1)},\zeta^{(1)}}_{L^2(\Omega)}^{\frac{1}{2}} \\
	& \lesssim \norm{\nabla (\phi,\zeta)}_{L^2(\Omega)}^{\frac{1}{2}} \norm{\phi,\zeta}_{L^2(\Omega)}^{\frac{1}{2}}, 
	\end{aligned}
\end{equation}
and for $k=2,3,$ 
\begin{align*}
	\norm{\phi^{(k)},\zeta^{(k)}}_{L^\infty(\Omega)} & \lesssim \norm{\nabla^2 (\phi^{(k)},\zeta^{(k)})}_{L^2(\Omega)}^{\frac{k}{2(6-k)}} \norm{(\phi^{(k)},\zeta^{(k)})}_{L^6(\Omega)}^{\frac{3(4-k)}{2(6-k)}}, \\
	\norm{\phi^{(k)},\zeta^{(k)}}_{L^6(\Omega)} & \lesssim \norm{\nabla (\phi^{(k)},\zeta^{(k)})}_{L^2(\Omega)}^{\frac{k}{3}} \norm{ (\phi^{(k)},\zeta^{(k)})}_{L^2(\Omega)}^{\frac{3-k}{3}},
\end{align*}
which yields that
\begin{align}
	\norm{\phi^{(k)},\zeta^{(k)}}_{L^\infty(\Omega)} \lesssim \norm{(\phi,\zeta)}_{H^2(\Omega)}^{\frac{1}{2}} \norm{\nabla (\phi,\zeta)}_{L^2(\Omega)}^{\frac{1}{2}}. \label{gn-3}
\end{align}
%
Then combining \cref{gn-1-2,gn-3}, one has that
\begin{align*}
	\norm{\phi,\zeta}_{L^\infty(\R^3)}(t) = \norm{\phi,\zeta}_{L^\infty(\Omega)}(t) \lesssim \norm{\nabla (\phi,\zeta)}_{L^2(\Omega)}^{\frac{1}{2}}(t).
\end{align*}
Since $ \norm{\nabla (\phi,\zeta)}_{L^2(\Omega)}^2(t) $ belongs to the $ W^{1,1}\left((0,+\infty)\right) $ space, one can get that $$  \norm{\nabla (\phi,\zeta)}_{L^2(\Omega)}^2(t) \to 0 \qquad \text{as } \ t \to +\infty. $$ 
Similarly, since $ \norm{\nabla^2 (\phi,\zeta)}_{L^2(\Omega)}^2(t) $ belongs to the $ W^{1,1}\left((0,+\infty)\right) $ space, one can also get that $ \norm{\nabla (\phi,\zeta)}_{L^\infty(\R^3)}(t) \lesssim \norm{\nabla^2 (\phi,\zeta)}_{L^2(\Omega)}^2(t) \to 0 $ as $ t \to +\infty. $ Thus, \cref{asymp} holds true.

\vspace{.2cm}

It remains to show the exponential decay rate of $ (\phi^\md,\psi^\md) $ to complete the proof of \cref{Thm-pertur}. Note that this is the reason why we need the third-order regularity of the solution, $ (\phi,\zeta). $
Recall the notations $ \delta := \abso{\rhob_+-\rhob_-} $ and $ \e := \norm{v_0,\wv_0}_{H^6(\Torus^3)} $ and we denote 
$$  \nu :=  \sup\limits_{t>0} \norm{\phi, \zeta}_{H^3(\Omega)}.  $$

It follows from the Sobolev inequality and \cref{Lem-od-md} and that
\begin{equation}\label{nu-od}
	\sup_{ t>0} \norm{\phi^\od, \zeta^\od}_{W^{2,+\infty}(\R)} \lesssim \sup_{ t>0} \norm{\phi^\od, \zeta^\od}_{H^3(\R)} \lesssim \sup_{ t>0} \norm{\phi, \zeta}_{H^3(\Omega)} = \nu.
\end{equation}
And it holds that
\begin{equation}\label{nu-md-2}
	\sup_{ t>0} \norm{\phi^\md, \zeta^\md}_{H^3(\Omega)} \lesssim \sup_{ t>0} \norm{\phi, \zeta}_{H^3(\Omega)} + \sup_{ t>0} \norm{\phi^\od, \zeta^\od}_{H^3(\R)} \lesssim \nu.
\end{equation}
Then it follows from \cref{Lem-GN} that
\begin{equation}\label{nu-md}
	\sup_{ t>0} \norm{\phi^\md, \zeta^\md}_{W^{1,+\infty}(\Omega)} 
	\lesssim \sup_{ t>0} \norm{\phi^\md, \zeta^\md}_{H^3(\Omega)}
	\lesssim \nu.
\end{equation}

Note that the zero mode of any function is independent of the transverse variables, $ \xp=(x_2,x_3). $ Thus, subtracting \cref{equ-phipsi} by \cref{equ-od} yields that
\begin{equation}\label{equ-md}
	\begin{cases}
		\p_t \phi^\md + \dv \psi^\md = g_1^\md, \\
		\p_t \psi^\md + \sum\limits_{i=1}^3 \p_i \big( \ut_i^s \psi^\md + \mvt^s \zeta_i^\md + \mathbf{r}_{1,i} \big) + \nabla \big(p'(\rhot^s) \phi^\md + r_2 \big) \\
		\qquad\qquad\qquad\quad = \mu \lap \zeta^\md + (\mu+\lambda) \nabla \dv \zeta^\md + \gv_2^\md,
	\end{cases}
\end{equation}
where 
\begin{equation}\label{r-1-2}
	\begin{aligned}
	\rv_{1,i} & =  \big( u_i \mv - \ut_i \mvt \big)^\md - \ut_i^s \psi^\md - \mvt^s \zeta_i^\md \qquad \text{for } \ i = 1, 2, 3, \\
	r_2 & = \big(p(\rho)-p(\rhot)\big)^\md - p'(\rhot^s) \phi^\md.
	\end{aligned}
\end{equation}
By \cref{poincare}, it holds that
\begin{equation}\label{Poincare}
	\norm{ \phi^\md}_{L^2(\Omega)} \lesssim \norm{\nabla \phi^\md}_{L^2(\Omega)} \quad \text{and} \quad \norm{ \psi^\md}_{L^2(\Omega)} \lesssim \norm{\nabla \psi^\md}_{L^2(\Omega)}.
\end{equation} 
And using the fact that $ \psi = (\rhot + \phi)(\uvt+\zeta) - \rhot\uvt = \rhot \zeta + \uvt \phi + \phi \zeta $, one has that
\begin{align}
	\psi^\md = \rhot^s \zeta^\md + \uvt^s \phi^\md +  \rv_3, \label{psi-zeta}
\end{align}
where 
\begin{equation}\label{r-3}
	\rv_3 = [ (\rhot-\rhot^s) \zeta + (\uvt-\uvt^s) \phi ]^\md + \phi^\od \zeta^\md + \zeta^\od \phi^\md + (\phi^\md \zeta^\md)^\md.
\end{equation}
It follows from \cref{r-3,cauchy-ineq} that
\begin{align*}
	\norm{\rv_3}_{H^1(\Omega)} & \lesssim \norm{\rhot-\rhot^s}_{W^{1,+\infty}(\Omega)} \norm{\zeta}_{H^1(\Omega)} + \norm{\uvt-\uvt^s}_{W^{1,+\infty}(\Omega)} \norm{\phi}_{H^1(\Omega)} \\
	& \quad + \norm{\phi^\od}_{W^{1,\infty}(\R)} \norm{\zeta^\md}_{H^1(\Omega)} + \norm{\zeta^\od}_{W^{1,\infty}(\R)} \norm{\phi^\md}_{H^1(\Omega)} \\
	& \quad + \norm{\phi^\md}_{W^{1,\infty}(\Omega)} \norm{\zeta^\md}_{H^1(\Omega)}.
\end{align*}
Then using \cref{ansatz-shock,ansatz-shock-1,nu-od,nu-md}, one can verify that
\begin{equation}\label{est-r-3-1}
	\norm{\rv_3}_{H^1(\Omega)} \lesssim \e \nu e^{-ct} + \nu \norm{\zeta^\md,\phi^\md}_{H^1(\Omega)}.
\end{equation}
And it is similar to prove that
\begin{equation}\label{est-r-3-0}
	\norm{\rv_3}_{L^2(\Omega)} \lesssim \e \nu e^{-ct} + \nu \norm{\zeta^\md,\phi^\md}_{L^2(\Omega)}.
\end{equation}

\vspace{.2cm}

\begin{Lem}\label{Lem-zeta-psi}
	Under the assumptions of \cref{Thm}, there exist $ \delta_0>0, \gamma_0>0 $ and $ \nu_0>0 $ such that if $ \delta \leq \delta_0 $, $ \e \leq \gamma_0 \delta $ and $ \nu \leq \nu_0, $ then
	\begin{align}
		\norm{\zeta^\md}_{H^1(\Omega)} & \lesssim \norm{\nabla \psi^\md}_{L^2(\Omega)} + (\delta +\nu) \norm{\nabla \phi^\md}_{L^2(\Omega)} + \e \nu e^{-ct}, \label{zeta-est} \\
		\norm{\psi^\md}_{L^2(\Omega)} & \lesssim \norm{\zeta^\md}_{L^2(\Omega)} + (\delta +\nu) \norm{\nabla \phi^\md}_{L^2(\Omega)} + \e \nu e^{-ct}, \label{psi-est} \\
		\norm{\nabla \psi^\md}_{L^2(\Omega)} & \lesssim \norm{\nabla \zeta^\md}_{L^2(\Omega)} + (\delta +\nu)   \norm{\nabla\phi^\md}_{L^2(\Omega)} + \e \nu e^{-ct}. \label{psi-nab-est}
	\end{align}
\end{Lem}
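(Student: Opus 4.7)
The whole lemma is a direct algebraic consequence of the decomposition \cref{psi-zeta}, namely $\psi^\md = \rhot^s \zeta^\md + \uvt^s \phi^\md + \rv_3$, together with the Poincar\'{e} inequality \cref{Poincare}, the smallness $|\uvt^s| \lesssim \delta$ and $|\nabla \rhot^s| + |\nabla \uvt^s| \lesssim \delta^2$ from \cref{small-us} and \cref{Lem-shock}, the lower bound $\rhot^s \geq \tfrac{1}{2}\rhob^+$, and the bounds \cref{est-r-3-1,est-r-3-0} on the remainder $\rv_3$. The idea is that the map $\zeta^\md \mapsto \psi^\md$ (with $\phi^\md$ frozen) is essentially multiplication by the bounded and bounded-below factor $\rhot^s$, modulo small lower-order terms; so it is invertible with bounds both ways, in $L^2$ and in $H^1$.

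First I would prove \cref{psi-est}. Starting directly from \cref{psi-zeta}, take the $L^2(\Omega)$ norm and bound $\norm{\rhot^s \zeta^\md}_{L^2} \lesssim \norm{\zeta^\md}_{L^2}$, $\norm{\uvt^s \phi^\md}_{L^2} \lesssim \delta \norm{\phi^\md}_{L^2}$, and $\norm{\rv_3}_{L^2} \lesssim \e\nu e^{-ct} + \nu \norm{\zeta^\md,\phi^\md}_{L^2}$ by \cref{est-r-3-0}. Then Poincar\'{e} \cref{Poincare} converts $\norm{\phi^\md}_{L^2}$ into $\norm{\nabla \phi^\md}_{L^2}$, giving \cref{psi-est} directly.

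Next I would prove \cref{psi-nab-est} by differentiating \cref{psi-zeta}:
\begin{equation*}
\nabla \psi^\md = \rhot^s \nabla \zeta^\md + \zeta^\md \nabla \rhot^s + \uvt^s \nabla \phi^\md + \phi^\md \nabla \uvt^s + \nabla \rv_3.
\end{equation*}
Taking $L^2$ norms, the first term contributes $\lesssim \norm{\nabla \zeta^\md}_{L^2}$; the second and fourth are of order $\delta^2 \norm{\zeta^\md, \phi^\md}_{L^2}$, which by Poincar\'{e} are absorbed into $\delta (\norm{\nabla \zeta^\md}_{L^2} + \norm{\nabla \phi^\md}_{L^2})$; the third is $\lesssim \delta \norm{\nabla \phi^\md}_{L^2}$; and $\norm{\nabla \rv_3}_{L^2}$ is controlled by \cref{est-r-3-1}. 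After another application of Poincar\'{e} to kill $\norm{\zeta^\md,\phi^\md}_{L^2}$ terms in the $\nu$-contribution from $\rv_3$, the estimate \cref{psi-nab-est} falls out.

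Finally, for \cref{zeta-est} I would solve \cref{psi-zeta} for $\zeta^\md$, namely $\zeta^\md = (\rhot^s)^{-1}(\psi^\md - \uvt^s \phi^\md - \rv_3)$. The $L^2$ estimate gives $\norm{\zeta^\md}_{L^2} \lesssim \norm{\psi^\md}_{L^2} + \delta \norm{\phi^\md}_{L^2} + \norm{\rv_3}_{L^2}$, and differentiating produces
\begin{equation*}
\rhot^s \nabla \zeta^\md = \nabla \psi^\md - \zeta^\md \nabla \rhot^s - \uvt^s \nabla \phi^\md - \phi^\md \nabla \uvt^s - \nabla \rv_3,
\end{equation*}
so that $\norm{\nabla \zeta^\md}_{L^2} \lesssim \norm{\nabla \psi^\md}_{L^2} + \delta^2 \norm{\zeta^\md}_{L^2} + \delta \norm{\nabla \phi^\md}_{L^2} + \delta^2 \norm{\phi^\md}_{L^2} + \norm{\nabla \rv_3}_{L^2}$. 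By Poincar\'{e} $\norm{\zeta^\md}_{L^2} \lesssim \norm{\nabla \zeta^\md}_{L^2}$, and the $\rv_3$-contributions are controlled by \cref{est-r-3-1}; choosing $\delta_0, \nu_0$ small lets us absorb the $\delta^2 \norm{\nabla \zeta^\md}_{L^2}$ and $\nu \norm{\zeta^\md}_{H^1}$ terms on the left, yielding $\norm{\nabla \zeta^\md}_{L^2} \lesssim \norm{\nabla \psi^\md}_{L^2} + (\delta+\nu)\norm{\nabla\phi^\md}_{L^2} + \e\nu e^{-ct}$. Combined with Poincar\'{e} once more to turn $\norm{\zeta^\md}_{H^1}$ into $\norm{\nabla \zeta^\md}_{L^2}$, this gives \cref{zeta-est}.

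No step is really hard; the only place where one has to be a bit careful is the absorption argument in the derivation of \cref{zeta-est}, where the self-referential $\nu$-terms from $\nabla \rv_3$ must be moved to the left-hand side. This is why the smallness assumptions $\delta \leq \delta_0$, $\e \leq \gamma_0 \delta$ and $\nu \leq \nu_0$ are needed.
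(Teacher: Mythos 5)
Your proposal is correct and follows essentially the same route as the paper: both start from the algebraic identity $\psi^\md = \rhot^s\zeta^\md + \uvt^s\phi^\md + \rv_3$, exploit the boundedness above and below of $\rhot^s$, the smallness $|\uvt^s|\lesssim\delta$ and $|\nabla\rhot^s|+|\nabla\uvt^s|\lesssim\delta^2$, the remainder bounds \cref{est-r-3-0,est-r-3-1}, and the Poincar\'e inequality \cref{Poincare} to convert zero-order non-zero-mode norms into gradient norms. The only cosmetic difference is the order of the three inequalities: the paper derives \cref{zeta-est} first by solving for $\zeta^\md$ in $H^1$ and then gets \cref{psi-est,psi-nab-est} (using \cref{zeta-est} plus a smallness absorption to finish \cref{psi-nab-est}), whereas you prove \cref{psi-est,psi-nab-est} directly and invert at the end; both orderings are sound and lead to the same absorptions.
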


\begin{proof}

It follows from \cref{small-us,psi-zeta} that
\begin{align*}
	\norm{\zeta^\md}_{H^1(\Omega)} & \lesssim \norm{\psi^\md}_{H^1(\Omega)} + \delta \norm{\phi^\md}_{H^1(\Omega)} + \norm{\rv_3}_{H^1(\Omega)},
\end{align*}
which, together with \cref{est-r-3-1}, yields that
\begin{align*}
	\norm{\zeta^\md}_{H^1(\Omega)} \lesssim \norm{\psi^\md}_{H^1(\Omega)} + (\delta+\nu) \norm{\phi^\md}_{H^1(\Omega)} + \e \nu e^{-ct}.
\end{align*}
Then \cref{zeta-est} holds true by using \cref{Poincare}. 

On the other hand, by \cref{est-r-3-0,est-r-3-1,psi-zeta,Poincare}, it is direct to prove \cref{psi-est} and 
\begin{align*}
	\norm{\nabla \psi^\md}_{L^2(\Omega)} & \lesssim \norm{\nabla \zeta^\md}_{L^2(\Omega)} + (\delta+\nu) \big(\norm{\zeta^\md}_{L^2(\Omega)} +  \norm{\phi^\md}_{H^1(\Omega)}\big) + \e \nu e^{-ct}.
\end{align*}
This, together with \cref{zeta-est}, yields \cref{psi-nab-est}.

\end{proof}

\begin{Lem}\label{Lem-r}
	Under the assumptions of \cref{Thm}, there exist $ \delta_0>0, \gamma_0>0 $ and $ \nu_0>0 $ such that if $ \delta \leq \delta_0 $, $ \e \leq \gamma_0 \delta $ and $ \nu \leq \nu_0, $ then
	\begin{align}
		\norm{\rv_{1,i}}_{H^1(\Omega)} & \lesssim \nu \norm{\nabla \psi^\md}_{L^2(\Omega)} + (\delta +\nu) \norm{\nabla \phi^\md}_{L^2(\Omega)} + \e \nu e^{-ct} \qquad \text{for} \ i=1,2,3, \label{r-1-est} \\
		\norm{r_2}_{H^1(\Omega)} & \lesssim \nu \norm{\nabla \phi^\md}_{L^2(\Omega)} + \e \nu e^{-ct}. \label{r-2-est}
	\end{align}
\end{Lem}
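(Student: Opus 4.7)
The strategy for both estimates is the same: perform a Taylor expansion of the nonlinear function ($p(\rho)$ or the product $u_i m_j$) around the background state $(\rhot^s,\mvt^s)$, isolate a linear part whose non-zero mode cancels exactly against the subtracted term in $r_2$ or $\rv_{1,i}$, and then show that what remains consists of (i) coefficients carrying the exponential decay $\e e^{-ct}$ from $\|\rhot-\rhot^s\|_{W^{1,\infty}}+\|\uvt-\uvt^s\|_{W^{1,\infty}}\lesssim \e e^{-ct}$ (see \cref{ansatz-shock,ansatz-shock-1}), or (ii) genuinely quadratic terms in the perturbation whose non-zero mode always contains at least one factor of $\phi^\md$, $\zeta^\md$, or $\psi^\md$, to which the transversal Poincar\'{e} inequality \cref{poincare} can be applied.

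For $r_2$, I would write $p(\rho)-p(\rhot)=p'(\rhot)\phi+\frac12 p''(\tilde\rho)\phi^2$, split $p'(\rhot)=p'(\rhot^s)+[p'(\rhot)-p'(\rhot^s)]$, and use that $p'(\rhot^s)$ depends only on $x_1$ to get $[p'(\rhot^s)\phi]^\md=p'(\rhot^s)\phi^\md$, which cancels the linear subtraction in $r_2$. Thus
\[
r_2 = \bigl[(p'(\rhot)-p'(\rhot^s))\phi\bigr]^\md + \Bigl[\tfrac12 p''(\tilde\rho)\phi^2\Bigr]^\md.
\]
The first bracket is bounded in $H^1$ by $\e e^{-ct}\|\phi\|_{H^1}\lesssim \e\nu e^{-ct}$. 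For the quadratic piece, decompose $\phi=\phi^\od+\phi^\md$; since $(\phi^\od)^2$ depends only on $x_1$, a direct computation gives $(\phi^2)^\md=2\phi^\od\phi^\md+\bigl((\phi^\md)^2\bigr)^\md$, which is the key cancellation: every term contains $\phi^\md$. Using the uniform bound $\|\phi\|_{L^\infty}\lesssim\nu$, one estimates $\|(\phi^2)^\md\|_{L^2}\lesssim\nu\|\phi^\md\|_{L^2}\lesssim\nu\|\nabla\phi^\md\|_{L^2}$ by \cref{poincare}. The $H^1$ control follows by taking $\nabla$ and using the same decomposition plus $\|\nabla\phi\|_{L^\infty}\lesssim\nu$ (from \cref{Lem-GN} applied to $\phi\in H^3$).

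For $\rv_{1,i}$, I expand $u_i m_j-\ut_i\mt_j=\ut_i\psi_j+\zeta_i\mt_j+\zeta_i\psi_j$, then split $\ut_i=\ut_i^s+(\ut_i-\ut_i^s)$ and $\mt_j=\mt_j^s+(\mt_j-\mt_j^s)$. Because $\ut_i^s$ and $\mt_j^s$ are functions of $x_1$ alone, the zero-mode operation commutes with multiplication by them, producing $\ut_i^s\psi_j^\md+\mt_j^s\zeta_i^\md$, which is exactly what is subtracted off in the definition of $\rv_{1,i}$. What remains is
\[
\rv_{1,i,j}=\bigl[(\ut_i-\ut_i^s)\psi_j\bigr]^\md+\bigl[\zeta_i(\mt_j-\mt_j^s)\bigr]^\md+(\zeta_i\psi_j)^\md,
\]
and the first two brackets contribute $\e\nu e^{-ct}$ as in the case of $r_2$. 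For the genuinely quadratic term $(\zeta_i\psi_j)^\md$, write $\zeta_i\psi_j=\zeta_i^\od\psi_j^\od+\zeta_i^\od\psi_j^\md+\zeta_i^\md\psi_j^\od+\zeta_i^\md\psi_j^\md$; the first summand is a zero mode and disappears, while the other three all carry a non-zero-mode factor, hence
\[
\|(\zeta_i\psi_j)^\md\|_{L^2}\lesssim \|\zeta^\od\|_{L^\infty}\|\psi_j^\md\|_{L^2}+\|\psi^\od\|_{L^\infty}\|\zeta_i^\md\|_{L^2}+\|\zeta^\md\|_{L^\infty}\|\psi_j^\md\|_{L^2}\lesssim \nu\bigl(\|\psi_j^\md\|_{L^2}+\|\zeta_i^\md\|_{L^2}\bigr).
\]
Applying the Poincar\'{e} inequality \cref{poincare} and then \cref{zeta-est} to trade $\|\nabla\zeta^\md\|_{L^2}$ for $\|\nabla\psi^\md\|_{L^2}+(\delta+\nu)\|\nabla\phi^\md\|_{L^2}+\e\nu e^{-ct}$ gives the bound \cref{r-1-est}; the $H^1$ version is obtained identically after differentiating, using $\|\nabla(\zeta,\psi)\|_{L^\infty}\lesssim\nu$ from \cref{Lem-GN}.

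The main (mild) obstacle is the careful bookkeeping of the non-zero-mode decomposition of products: it is essential to recognise that the subtracted linear terms $\ut_i^s\psi^\md,\ \mvt^s\zeta_i^\md,\ p'(\rhot^s)\phi^\md$ are precisely constructed to cancel the $x_1$-dependent portions of the Taylor expansion (so that no uncontrolled zero-mode quantity such as $\|\phi^\od\|_{L^2}$ appears on the right-hand side), and that after this cancellation every remaining contribution is either exponentially decaying or contains a non-zero-mode factor on which \cref{poincare} may be invoked.
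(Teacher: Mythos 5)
Your treatment of $\rv_{1,i}$ is essentially identical to the paper's: same expansion $u_i\mv - \ut_i\mvt = \ut_i\psi + \zeta_i\mvt + \zeta_i\psi$, same splitting off of the $x_1$-only coefficients $\ut_i^s, \mvt^s$, same four-way decomposition of $(\zeta_i\psi_j)^\md$, and the same use of \cref{poincare} and \cref{zeta-est} to close. No issue there.

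For $r_2$ there is a genuine gap. You correctly note that $(\phi^2)^\md = 2\phi^\od\phi^\md + ((\phi^\md)^2)^\md$ carries a $\phi^\md$ factor, but you then use $\|(\phi^2)^\md\|_{L^2} \lesssim \nu\|\phi^\md\|_{L^2}$ as though it controls $\|[\tfrac12 p''(\tilde\rho)\phi^2]^\md\|_{L^2}$. This does not follow: the operator $f\mapsto f^\md$ does not commute with multiplication by an $\xp$-dependent function, so $[ab]^\md \ne a\, b^\md$ in general (a pure-$\xp$, mean-zero $a$ times a pure-$x_1$ $b$ has $b^\md=0$ yet $(ab)^\md = ab$). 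Since $\tilde\rho$ lies between $\rhot$ and $\rho=\rhot+\phi$, the coefficient $p''(\tilde\rho)$ depends on $\xp$, and the term
\begin{equation*}
\big[\tfrac12 p''(\tilde\rho)\,(\phi^\od)^2\big]^\md \;=\; (\phi^\od)^2\,\big[\tfrac12 p''(\tilde\rho)\big]^\md
\end{equation*}
survives and is not covered by your estimate. To control it one must also decompose the coefficient, showing that $[p''(\tilde\rho)]^\md$ itself carries either an $\e e^{-ct}$ factor (from $\rhot-\rhot^s$) or a $\phi^\md$ factor. The paper does exactly this by writing $r_2$ with the integral form of the Taylor remainder and splitting it in three stages, first $p'(\rhot+\theta\phi)\to p'(\rhot^s+\theta\phi)$ (exponentially small), then $p'(\rhot^s+\theta\phi)\to p'(\rhot^s+\theta\phi^\od)$ (producing an explicit $\phi^\md$), and finally $\int_0^1 \big(p'(\rhot^s+\theta\phi^\od)-p'(\rhot^s)\big)d\theta\,\phi$, whose coefficient is a pure $x_1$-function so the $\md$ slides directly onto $\phi$. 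Your approach can be repaired by adding the same comparison $p''(\tilde\rho)$ against $p''$ evaluated at an $x_1$-only argument, but as written the step from $\|(\phi^2)^\md\|_{L^2}$ to $\|[p''(\tilde\rho)\phi^2]^\md\|_{L^2}$ is not justified.
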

\begin{proof}
	Note that it follows from \cref{r-1-2} that
	\begin{align*}
		\rv_{1,i} & = \big( u_i \mv - \ut_i \mvt - \ut^s_i \psi - \mvt^s \zeta_i \big)^\md \\
		& = \big[ (\ut_i - \ut_i^s) \psi + (\mvt-\mvt^s) \zeta_i + \zeta_i \psi \big]^\md \\
		& =  \big[(\ut_i - \ut_i^s) \psi + (\mvt-\mvt^s) \zeta_i \big]^\md + \zeta_i^\od \psi^\md + \zeta_i^\md \psi^\od + (\zeta_i^\md \psi^\md)^\md.
	\end{align*}
	Thus, using \cref{cauchy-ineq,nu-md,nu-od}, it holds that
	\begin{align*}
		\norm{\rv_{1,i}}_{H^1(\Omega)} & \lesssim \e e^{-ct} \norm{\psi, \zeta}_{H^1(\Omega)} + \norm{\zeta^\od, \psi^\od, \zeta^\md}_{W^{1,\infty}(\Omega)} \norm{\psi^\md,\zeta^\md}_{H^1(\Omega)} \\
		& \lesssim \e\nu e^{-ct} + \nu \norm{\psi^\md,\zeta^\md}_{H^1(\Omega)},
	\end{align*}
	which, together with \cref{zeta-est,Poincare}, yields \cref{r-1-est}.
	
	Similarly, the term $ r_2 $ in \cref{r-1-2} satisfies that
	\begin{align*}
		r_2 & = \big[ p(\rho) - p(\rhot) - p'(\rhot^s) \phi \big]^\md \\
		& = \underbrace{\Big[ \int_0^1 \big( p'(\rhot + \theta \phi) - p'(\rhot^s + \theta \phi) \big) d\theta \phi \Big]^\md}_{I_1} + \Big[ \int_0^1 \big( p'(\rhot^s + \theta\phi) - p'(\rhot^s) \big) d\theta \phi \Big]^\md \\
		& = I_1 + \underbrace{\Big[ \int_0^1 \big( p'(\rhot^s + \theta\phi) - p'(\rhot^s+ \theta \phi^\od) \big) d\theta \phi \Big]^\md}_{I_2} \\
		& \qquad \ + \underbrace{ \int_0^1 \big( p'(\rhot^s + \theta\phi^\od) - p'(\rhot^s) \big) d\theta \phi^\md}_{I_3}.
	\end{align*}
	It follows from \cref{cauchy-ineq} that
	\begin{align*}
		\norm{I_1}_{H^1(\Omega)} & \lesssim \norm{\rhot - \rhot^s}_{W^{1,\infty}(\Omega)} \norm{\phi}_{H^1(\Omega)} \lesssim \e \nu e^{-ct}.
	\end{align*}
	Note that 
	\begin{align*}
		I_2 = \Big[ \int_0^1 \int_0^1 p''(\rhot^s + \theta\phi^\od + r\theta \phi^\md) dr \theta d\theta \phi^\md \phi \Big]^\md.
	\end{align*}
	Thus, using \cref{small-phizeta,cauchy-ineq}, one has that
	\begin{align*}
		\norm{I_2}_{H^1(\Omega)} &  \lesssim \norm{\phi}_{W^{1,\infty}(\Omega)} \norm{\phi^\md}_{H^1(\Omega)} \lesssim \nu \norm{\phi^\md}_{H^1(\Omega)}.
	\end{align*}
	Similarly, it follows from \cref{cauchy-ineq,nu-od} that
	\begin{align*}
		\norm{I_3}_{H^1(\Omega)} \lesssim \norm{ \phi^\od}_{W^{1,\infty}(\R)} \norm{\phi^\md}_{H^1(\Omega)} \lesssim \nu \norm{\phi^\md}_{H^1(\Omega)}.
	\end{align*}
	Then with the estimate of each $ I_i $, \cref{r-2-est} can follow from \cref{Poincare} directly.

\end{proof}

Hereafter, we let $ a_i $ for $i=1,2,3,\cdots $ denote some positive generic constants, which are independent of $ \delta, \e, \nu, \gamma_0 $ and $t$.

\begin{Lem}\label{Lem-exp-1}
	Under the assumptions of \cref{Thm}, there exist $ \delta_0>0, \gamma_0>0 $ and $ \nu_0>0 $ such that if $ \delta \leq \delta_0 $, $ \e \leq \gamma_0 \delta $ and $ \nu \leq \nu_0, $ then
	\begin{equation}\label{est-md-1}
		\begin{aligned}
		& \frac{d}{dt} \Big[ \int_\Omega  \big( \abso{p'(\rhot^s)} \abso{\phi^\md}^2 + \abso{\psi^\md}^2 \big) dx \Big] + a_1 \norm{\nabla \psi^\md}_{L^2(\Omega)}^2 \\
		& \qquad \lesssim (\delta+\nu) \norm{\nabla\phi^\md}^2_{L^2(\Omega)} + \e \nu e^{-ct}.
	\end{aligned}
	\end{equation}
\end{Lem}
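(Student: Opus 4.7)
The plan is to perform a weighted $L^2$ energy estimate directly on the non-zero mode system \cref{equ-md}. I would multiply the first equation by $p'(\rhot^s)\phi^\md$ and dot-multiply the second by $\psi^\md$, then add the two and integrate over $\Omega$. The weight $p'(\rhot^s)$, which is strictly positive by \cref{posit-alpha}, is chosen precisely so that the two pressure/divergence terms combine as
\begin{equation*}
p'(\rhot^s)\phi^\md \dv\psi^\md + \psi^\md \cdot \nabla\bigl(p'(\rhot^s)\phi^\md\bigr) = \dv\bigl(p'(\rhot^s)\phi^\md\psi^\md\bigr),
\end{equation*}
which integrates to zero on $\Omega$ since everything is periodic in $\xp\in\Torus^2$ and the $H^3$-regularity ensures the flux vanishes as $|x_1|\to\infty$. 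This cancellation is what makes the choice of weight natural and avoids the otherwise bad coupling between pressure and compressibility.

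For the viscous contribution, I would use the decomposition \cref{psi-zeta} to write $\zeta^\md = (\rhot^s)^{-1}(\psi^\md - \uvt^s\phi^\md - \rv_3)$, so that
\begin{equation*}
-\int_\Omega \psi^\md\cdot\bigl[\mu\lap\zeta^\md + (\mu+\lambda)\nabla\dv\zeta^\md\bigr]dx
\end{equation*}
produces a principal dissipation of the form $\int_\Omega (\rhot^s)^{-1}\bigl(\mu|\nabla\psi^\md|^2 + (\mu+\lambda)|\dv\psi^\md|^2\bigr)dx$, which is bounded below by $a_1\|\nabla\psi^\md\|_{L^2(\Omega)}^2$ thanks to \cref{bdd-rhot}. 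The error pieces generated by $\uvt^s\phi^\md$ and $\rv_3$ are small: the former carries a factor $|\uvt^s|\lesssim\delta$ from \cref{small-us}, and the latter is controlled by \cref{est-r-3-1}, both of which feed straight into the allowed RHS after Young's inequality.

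The remaining terms in the identity come from: (i) the transport part $\sum_i\p_i(\ut_i^s\psi^\md + \mvt^s\zeta_i^\md + \rv_{1,i})$; (ii) the quadratic correction $\nabla r_2$; (iii) the time derivative of the weight $\tfrac12\p_t p'(\rhot^s)|\phi^\md|^2$, which is pointwise $O(\delta^2)|(\rhot^s)'||\phi^\md|^2$ by \cref{Lem-shock}; and (iv) the source errors $g_1^\md$, $\gv_2^\md$. For (i), the terms with $(\ut_1^s)'$ and $(\rhot^s)'$ are pointwise of size $\delta$ or smaller, and the $\mvt^s\zeta^\md$ piece is handled by substituting \cref{zeta-est} and using Young's inequality to absorb its $\|\nabla\psi^\md\|_{L^2}$ contribution into the dissipation; (ii) uses \cref{r-2-est}; (iii) is dominated by $\delta\|\nabla\phi^\md\|_{L^2}^2$ after applying the transversal Poincar\'e inequality \cref{Poincare} to convert $\|\phi^\md\|_{L^2}^2 \lesssim \|\nabla\phi^\md\|_{L^2}^2$; and (iv) is estimated by \cref{Lem-F} together with $\|\phi^\md,\psi^\md\|_{L^2} \lesssim \|\nabla\phi^\md,\nabla\psi^\md\|_{L^2}$, producing an $\e\nu e^{-ct}$ contribution after Young's inequality.

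The key obstacle, and the reason the statement only carries dissipation in $\psi^\md$ rather than in $\phi^\md$, is that the pressure-divergence cancellation removes any hope of dissipation in $\phi^\md$ at this step: the best one can do is tolerate $(\delta+\nu)\|\nabla\phi^\md\|_{L^2}^2$ on the right-hand side. Consequently, this estimate must be combined with a separate estimate that produces $\|\nabla\phi^\md\|_{L^2}^2$ dissipation (presumably via a $\nabla\phi^\md\cdot\psi^\md$-type multiplier in the next lemma), and the main bookkeeping issue is to ensure that every nonlinear and commutator term is either exponentially decaying via $\e e^{-ct}$ or carries a smallness factor $\delta+\nu$ small enough to be eventually absorbed when the two estimates are combined.
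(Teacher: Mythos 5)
Your proposal is correct and follows essentially the same route as the paper: the same weighted multiplier $\bigl(p'(\rhot^s)\phi^\md,\psi^\md\bigr)$, the same observation that the pressure and divergence coupling integrate away, the same passage through the relation $\psi^\md=\rhot^s\zeta^\md+\uvt^s\phi^\md+\rv_3$ to extract dissipation in $\nabla\psi^\md$ from the viscous term, and the same use of \cref{Poincare}, \cref{zeta-est}, \cref{r-1-est}, \cref{r-2-est} and \cref{Lem-F} to control the remainders. The only slip is notational: you write the time derivative of the weight as pointwise $O(\delta^2)\abso{(\rhot^s)'}\abso{\phi^\md}^2$, whereas $\p_t p'(\rhot^s)=-s\,p''(\rhot^s)(\rhot^s)'$ is $O(1)\abso{(\rhot^s)'}\abso{\phi^\md}^2=O(\delta^2)\abso{\phi^\md}^2$; this does not affect the conclusion since you still absorb it correctly via the transversal Poincar\'e inequality.
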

\begin{proof}
Multiplying $ p'(\rhot^s) \phi^\md $ on \cref{equ-md}$ _1 $ and $ \psi^\md $ on \cref{equ-md}$ _2 $ and adding the resulting equations up, one has that
\begin{equation}\label{equ-exp-1}
	\p_t \Big[\frac{1}{2} \big( \abso{p'(\rhot^s)} \abso{\phi^\md}^2 + \abso{\psi^\md}^2 \big) \Big] + I_4 = \dv (\cdots) + \sum_{i=5}^7 I_i,
\end{equation}
where $ \dv (\cdots) $ denotes some terms which vanishes after integration over $ \Omega $, and
\begin{align*}
	I_4 & = \sum_{i=1}^3 \mu \p_i \zeta^\md \cdot \p_i \psi^\md  + (\mu+\lambda) \dv \zeta^\md \dv \psi^\md, \\
	I_5 & = \frac{1}{2} p''(\rhot^s) \p_t \rhot^s \abso{\phi^\md}^2 + \sum_{i=1}^3 \big(\ut_i^s \psi^\md + \mvt^s \zeta_i^\md \big)\cdot \p_i \psi^\md, \\
	I_6 & = \sum_{i=1}^3 \rv_{1,i} \cdot \p_i\psi^\md + r_2 \dv \psi^\md, \\
	I_7 & = p'(\rhot^s) \phi^\md g_1^\md + \psi^\md \cdot \gv_2^\md.
\end{align*}
It follows from \cref{psi-zeta} that
\begin{align*}
	I_4 & = \rhot^s \big(\mu \abso{\nabla\zeta^\md}^2 + (\mu+\lambda) \abso{\dv \zeta^\md}^2 \big) \\
	& \quad + O(1) \delta \abso{\nabla\zeta^\md}  \big( \abso{\zeta^\md} + \abso{\phi^\md} + \abso{\nabla\phi^\md} \big) + O(1) \abso{\nabla\zeta^\md} \abso{\nabla\rv_3}.
\end{align*}
This, together with \cref{Poincare,est-r-3-1,zeta-est,psi-nab-est}, 
yields that
\begin{equation}\label{I-1-1}
	\int_\Omega I_4 dx \gtrsim \norm{\nabla \psi^\md}_{L^2(\Omega)}^2 - (\delta+\nu) \norm{\nabla \phi^\md}_{L^2(\Omega)}^2 - \e \nu e^{-ct}.
\end{equation}
It follows from \cref{small-us,Poincare} and Lemmas \ref{Lem-F}, \ref{Lem-zeta-psi} and \ref{Lem-r} that
\begin{equation}\label{I-2-1}
	\begin{aligned}
		\norm{I_5}_{L^1(\Omega)} 
		& \lesssim \delta \norm{\phi^\md}_{L^2(\Omega)} + \delta \norm{\nabla\psi^\md}_{L^2(\Omega)} \big(\norm{\psi^\md}_{L^2(\Omega)} + \norm{\zeta^\md}_{L^2(\Omega)} \big) \\
		& \lesssim \delta \norm{\nabla\psi^\md}_{L^2(\Omega)}^2 + (\delta+\nu) \norm{\nabla \phi^\md}_{L^2(\Omega)}^2 + \e \nu e^{-ct}, \\
		\norm{I_6}_{L^1(\Omega)} & \lesssim \norm{\nabla\psi^\md}_{L^2(\Omega)} \big[ \nu \norm{\nabla\psi^\md}_{L^2(\Omega)} + (\delta+\nu) \norm{\nabla \phi^\md}_{L^2(\Omega)} + \e \nu e^{-ct} \big] \\
		& \lesssim (\delta+\nu) \big(\norm{\nabla\psi^\md}_{L^2(\Omega)}^2 +  \norm{\nabla \phi^\md}_{L^2(\Omega)}^2\big) + \e \nu e^{-ct}, \\
		\norm{I_7}_{L^1(\Omega)} & \lesssim \e e^{-ct} \big( \norm{\phi^\md}_{L^2(\Omega)} + \norm{\psi^\md}_{L^2(\Omega)} \big) \lesssim  \e \nu e^{-ct}.
	\end{aligned}
\end{equation}
Combining \cref{I-1-1,I-2-1}, one can integrate \cref{equ-exp-1} on $ \Omega $ to obtain \cref{est-md-1}.

\end{proof}

\begin{Lem}\label{Lem-exp-2}
	Under the assumptions of \cref{Thm}, there exist $ \delta_0>0, \gamma_0>0 $ and $ \nu_0>0 $ such that if $ \delta \leq \delta_0 $, $ \e \leq \gamma_0 \delta $ and $ \nu \leq \nu_0, $ then
	\begin{equation}\label{est-md-2}
		\begin{aligned}
		& \frac{d}{dt} \Big[\int_\Omega \Big(\frac{\mut}{2\rhot^s} \abso{\nabla \phi^\md}^2 + \psi^\md\cdot \nabla\phi^\md \Big) dx \Big] + a_2 \norm{\nabla \phi^\md}_{L^2(\Omega)}^2 \\
		& \qquad \lesssim \norm{\nabla \psi^\md}_{L^2(\Omega)}^2 + \nu \norm{\nabla^2 \zeta^\md}_{L^2(\Omega)}^2 + \e \nu e^{-ct}.
	\end{aligned}
	\end{equation}
\end{Lem}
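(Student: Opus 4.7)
The plan is to mimic the derivation of \cref{Lem-est5} (where the functional $\mut|\nabla\phi|^2/(2\rho^2) + \zeta\cdot\nabla\phi$ was used for the full perturbation), adapted here to the linearized non--zero mode system \cref{equ-md} and to the momentum variable $\psi^\md$ rather than the velocity. I would derive two pointwise identities and add them. \emph{Step A:} apply $\nabla$ to \cref{equ-md}$_1$ and take the dot product with $\tfrac{\mut}{\rhot^s}\nabla\phi^\md$, producing
\begin{equation*}
\p_t\Big(\tfrac{\mut}{2\rhot^s}|\nabla\phi^\md|^2\Big) + \tfrac{\mut}{\rhot^s}\nabla\phi^\md\cdot\nabla\dv\psi^\md = \tfrac12\,\p_t\!\Big(\tfrac{\mut}{\rhot^s}\Big)|\nabla\phi^\md|^2 + \tfrac{\mut}{\rhot^s}\nabla\phi^\md\cdot\nabla g_1^\md.
\end{equation*}
\emph{Step B:} take the dot product of \cref{equ-md}$_2$ with $\nabla\phi^\md$, use \cref{equ-md}$_1$ to rewrite $\p_t\psi^\md\cdot\nabla\phi^\md = \p_t(\psi^\md\cdot\nabla\phi^\md) + \psi^\md\cdot\nabla\dv\psi^\md - \psi^\md\cdot\nabla g_1^\md$, and invoke the identity $\mu\lap\zeta^\md + (\mu+\lambda)\nabla\dv\zeta^\md = \mut\nabla\dv\zeta^\md - \mu\,\mathrm{curl}\,\mathrm{curl}\,\zeta^\md$ so that the curl--curl piece tested against $\nabla\phi^\md$ becomes the divergence $\dv(\mathrm{curl}\,\zeta^\md\times\nabla\phi^\md)$.

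Adding the two identities, the LHS produces exactly the time derivative of the functional in \cref{est-md-2}, together with the pressure term $\nabla(p'(\rhot^s)\phi^\md)\cdot\nabla\phi^\md = p'(\rhot^s)|\nabla\phi^\md|^2 + O(\delta)|\phi^\md||\nabla\phi^\md|$. Since $p'(\rhot^s)$ is uniformly positive, this gives the sought dissipation $a_2\|\nabla\phi^\md\|^2_{L^2(\Omega)}$, modulo an $O(\delta)$ cross term that is absorbed via the Poincar\'{e}-type inequality \cref{Poincare}. The critical cancellation is between the two second--derivative quantities $\tfrac{\mut}{\rhot^s}\nabla\phi^\md\cdot\nabla\dv\psi^\md$ and $-\mut\nabla\dv\zeta^\md\cdot\nabla\phi^\md$; using \cref{psi-zeta} I would write
\begin{equation*}
\tfrac{\dv\psi^\md}{\rhot^s}-\dv\zeta^\md = \tfrac{1}{\rhot^s}\bigl(\zeta^\md\cdot\nabla\rhot^s + \uvt^s\cdot\nabla\phi^\md + \phi^\md\,\dv\uvt^s + \dv\rv_3\bigr)
\end{equation*}
and take a further gradient. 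Every contribution then carries either a factor $\delta$ (from $\nabla\rhot^s,\uvt^s,\nabla\uvt^s$, via \cref{small-us} and \cref{Lem-shock}) or a factor $\nu+\e e^{-ct}$ (from $\rv_3$, through \cref{est-r-3-1}). The only term in which a second derivative of $\phi^\md$ appears is $\uvt^s\cdot\nabla^2\phi^\md/\rhot^s$, which after one integration by parts in $\Omega$ reduces to $O(\delta)|\nabla\phi^\md|^2$; while $\nabla\dv\rv_3\cdot\nabla\phi^\md$, after integration by parts and using the quadratic structure of $\rv_3$, produces exactly the $\nu\|\nabla^2\zeta^\md\|^2_{L^2(\Omega)}$ allowed on the right-hand side of \cref{est-md-2} (via Young's inequality applied to $\nu\|\nabla^2\zeta^\md\|_{L^2}\|\nabla\phi^\md\|_{L^2}$).

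All remaining error terms are routine. The source contributions $\tfrac{\mut}{\rhot^s}\nabla\phi^\md\cdot\nabla g_1^\md$, $\psi^\md\cdot\nabla g_1^\md$ and $\gv_2^\md\cdot\nabla\phi^\md$ are estimated by \cref{Lem-F} combined with \cref{Poincare} to yield $\e\nu e^{-ct}$. The convective term $\sum_i\p_i(\ut_i^s\psi^\md+\mvt^s\zeta_i^\md)\cdot\nabla\phi^\md$ is bounded by $\delta\|\nabla\psi^\md\|_{L^2}(\|\nabla\psi^\md\|_{L^2}+\|\nabla\phi^\md\|_{L^2})$ after invoking \cref{Lem-zeta-psi}; the commutators $\sum_i\p_i\rv_{1,i}\cdot\nabla\phi^\md$ and $\nabla r_2\cdot\nabla\phi^\md$ are controlled by \cref{Lem-r}; and the cross piece $\int\psi^\md\cdot\nabla\dv\psi^\md\,dx = -\|\dv\psi^\md\|^2_{L^2(\Omega)} \leq \|\nabla\psi^\md\|^2_{L^2(\Omega)}$ after an integration by parts, harmlessly contributing to the RHS. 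The main obstacle I anticipate is the bookkeeping of the cancellation between the two $\nabla\dv$-terms, since naively one loses two derivatives; the essential point is that the coefficient $\mut/\rhot^s$ in the chosen functional is exactly what is needed to match the viscous coefficient $\mut$ multiplying $\nabla\dv\zeta^\md$ once $\psi^\md$ is decomposed through \cref{psi-zeta}, the mismatch then coming entirely from small perturbative pieces $\uvt^s\phi^\md+\rv_3$ whose contributions are either of order $\delta$ or quadratic in $(\phi,\zeta)$.
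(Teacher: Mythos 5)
Your proposal follows essentially the same route as the paper's proof: test the gradient of the non-zero-mode mass equation against $\tfrac{\mut}{\rhot^s}\nabla\phi^\md$, test the momentum equation against $\nabla\phi^\md$, add, and exploit the cancellation between $-\tfrac{\mut}{\rhot^s}\nabla\dv\psi^\md\cdot\nabla\phi^\md$ and $+\mut\nabla\dv\zeta^\md\cdot\nabla\phi^\md$ via the decomposition $\psi^\md=\rhot^s\zeta^\md+\uvt^s\phi^\md+\rv_3$. The choice of functional, the pressure term producing the dissipation, the use of Poincar\'{e} through \cref{Poincare} and \cref{Lem-zeta-psi}, \cref{Lem-r}, and the source-term estimate via \cref{Lem-F} all match.

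The one place you deviate from the paper is in the treatment of the pieces of the cross term that carry two derivatives of $\phi^\md$. Your phrase ``the only term in which a second derivative of $\phi^\md$ appears is $\uvt^s\cdot\nabla^2\phi^\md/\rhot^s$'' is not literally correct: $\nabla\dv\rv_3$ also produces $\nabla^2\phi^\md$-contributions, through $\zeta^\od\phi^\md$ and $(\phi^\md\zeta^\md)^\md$ inside $\rv_3$. The paper isolates the latter as its term $I_{8,2}=-\tfrac{\mut}{\rhot^s}\nabla\phi^\md\cdot(\nabla^2\phi^\md\,\zeta^\md)^\md$ and controls it without integrating by parts, using the a priori bound $\sup_t\norm{\nabla^2\phi^\md}_{L^2(\Omega)}\lesssim\nu$ together with $\norm{\zeta^\md}_{L^\infty}\lesssim\norm{\zeta^\md}_{H^2}$ and \cref{zeta-est}. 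Your alternative — noting that the $\md$-projection can be dropped in the integral against $\nabla\phi^\md$, rewriting $(\zeta\cdot\nabla)\nabla\phi^\md\cdot\nabla\phi^\md=\tfrac12\,\zeta\cdot\nabla\abso{\nabla\phi^\md}^2$ and integrating by parts — also closes, producing a term $\lesssim\norm{\nabla\zeta^\md}_{L^\infty}\norm{\nabla\phi^\md}_{L^2}^2\lesssim\nu\norm{\nabla\phi^\md}_{L^2}^2$ (using \cref{nu-md}); the piece $\phi^\md\nabla\dv\zeta^\md\cdot\nabla\phi^\md$ is what really brings in $\nu\norm{\nabla^2\zeta^\md}_{L^2}\norm{\nabla\phi^\md}_{L^2}$ and hence, by Young, the $\nu\norm{\nabla^2\zeta^\md}_{L^2(\Omega)}^2$ on the right. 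So your route is correct and arguably more transparent at that step, at the mild cost that your summary statement should be rephrased to acknowledge that $\nabla\dv\rv_3$ also carries $\nabla^2\phi^\md$, and that the ``integration by parts'' is the $\tfrac12\,\zeta\cdot\nabla\abso{\nabla\phi^\md}^2$ trick rather than moving $\nabla$ onto $\nabla\phi^\md$ (the latter would produce an uncontrolled $\norm{\nabla^2\phi^\md}_{L^2}^2$).
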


\begin{proof}

Multiplying $ \frac{\mut}{\rhot^s} \nabla\phi^\md $ on the gradient $ ``\nabla" $ of  \cref{equ-md}$ _1 $ and $ \nabla\phi^\md $ on \cref{equ-md}$ _2 $, respectively, then adding the resulting equations together yields that
\begin{align}
	& \p_t \Big(\frac{\mut}{2\rhot^s} \abso{\nabla \phi^\md}^2 + \psi^\md\cdot \nabla\phi^\md \Big)  + p'(\rhot^s) \abso{\nabla\phi^\md}^2 =  \dv(\cdots) + \sum_{i=8}^{11} I_i, \label{equ-exp-2}
\end{align}
where $ (\cdots) = \p_t \phi^\md\psi^\md + \mu \nabla\phi^\md \times \text{curl} \zeta^\md $ and
\begin{align*}
	I_8 & := -\frac{\mut}{\rhot^s} \nabla\dv \psi^\md \cdot \nabla \phi^\md + \mut \nabla\dv \zeta^\md \cdot\nabla\phi^\md, \\
	I_9 & := - \frac{\mut\p_t \rhot^s}{2\abso{\rhot^s}^2} \abso{\nabla \phi^\md}^2 + \abso{\dv \psi^\md}^2 - p''(\rhot^s)\phi^\md  \nabla\rhot^s \cdot \nabla\phi^\md \\
	& \quad \ - \sum_{i=1}^{3} \nabla\phi^\md \cdot \p_i \big( \ut_i^s \psi^\md + \mvt^s \zeta_i^\md \big), \\
	I_{10} & := - \sum_{i=1}^{3} \nabla\phi^\md \cdot \p_i \rv_{1,i} - \nabla \phi^\md \cdot \nabla r_2,  \\
	I_{11} & := \frac{\mut}{\rhot^s}\nabla g_1^\md \cdot \nabla\phi^\md + \nabla\phi^\md \cdot \gv_2^\md - \dv \psi^\md g_1^\md.
\end{align*}
Here we have used the fact that
\begin{align*}
	\nabla\phi^\md \cdot (\lap \zeta^\md - \nabla\dv\zeta^\md) = \dv \big( \nabla\phi^\md \times \text{curl} \zeta^\md \big).
\end{align*}
By using \cref{psi-zeta,r-3}, one can get that
\begin{align*}
	I_8 = I_{8,1} + I_{8,2} + I_{8,3},
\end{align*}
where 
\begin{align*}
	I_{8,1} & = -\frac{\mut}{\rhot^s} \sum_{i=1}^3 \big(\ut_i^s+\zeta_i^\od\big)  \nabla\phi^\md \cdot \nabla \p_i \phi^\md, \\
	I_{8,2} & = -\frac{\mut}{\rhot^s} \nabla\phi^\md \cdot \big(\nabla^2 \phi^\md \zeta^\md \big)^\md,
\end{align*}
and $ I_{8,3} $ denotes the summation of the remaining terms, satisfying that 
\begin{align}
	\norm{I_{8,3}}_{L^1(\Omega)} & \lesssim \norm{\nabla\phi^\md}_{L^2(\Omega)} \Big( \delta \norm{\zeta^\md,\phi^\md}_{H^1(\Omega)} + \e e^{-ct} \norm{\phi,\zeta}_{H^2(\Omega)} \notag \\
	& \quad + \norm{\phi^\od}_{W^{2,\infty}(\R)} \norm{\zeta^\md}_{H^2(\Omega)} + \norm{\p_1\zeta^\od}_{W^{1,\infty}(\R)} \norm{\phi^\md}_{H^1(\Omega)} \notag \\
	& \quad + \norm{\phi^\md}_{L^\infty(\Omega)} \norm{\nabla^2 \zeta^\md}_{L^2(\Omega)} + \norm{\nabla\phi^\md}_{L^2(\Omega)} \norm{\nabla \zeta^\md}_{L^\infty(\Omega)} \Big) \notag \\
	& \lesssim (\delta+\nu) \big(\norm{\nabla\phi^\md}_{L^2(\Omega)}^2 + \norm{\nabla\psi^\md}_{L^2(\Omega)}^2 \big) + \nu \norm{\nabla^2 \zeta^\md}_{L^2(\Omega)}^2 + \e \nu e^{-ct}. \label{I-8-3}
\end{align}
Here we have used \cref{nu-od,nu-md,Poincare,zeta-est}.

Note that
\begin{align*}
	I_{8,1} = \dv \Big( - \frac{\mut}{2\rhot^s} \big( \uvt^s + \zeta^\od \big) \abso{\nabla\phi^\md}^2 \Big) +  \dv \Big( \frac{\mut}{2\rhot^s} \big( \uvt^s + \zeta^\od \big) \Big) \abso{\nabla\phi^\md}^2,
\end{align*}
which, together with \cref{small-us,nu-od}, yields that 
\begin{equation}\label{I-8-1}
	\abso{\int_{\Omega} I_{8,1} dx } \lesssim (\delta+\nu) \norm{\nabla\phi^\md}_{L^2(\Omega)}^2.
\end{equation}
It follows from \cref{Lem-GN}, \cref{zeta-est,nu-md-2} that
\begin{align}
	\norm{I_{8,2}}_{L^1(\Omega)} & \lesssim  \norm{\nabla\phi^\md}_{L^2(\Omega)}  \norm{\zeta^\md}_{L^\infty(\Omega)} \sup_{ t>0} \norm{\nabla^2 \phi^\md}_{L^2(\Omega)} \notag \\
	& \lesssim \nu \norm{\nabla\phi^\md}_{L^2(\Omega)}  \norm{\zeta^\md}_{H^2(\Omega)} \notag \\
	& \lesssim \nu \norm{\nabla\phi^\md}_{L^2(\Omega)}^2 + \nu \norm{\nabla\psi^\md}_{L^2(\Omega)}^2 + \nu \norm{\nabla^2 \zeta^\md}_{L^2(\Omega)}^2 + \e \nu e^{-ct}. \label{I-8-2}
\end{align}
Collecting \cref{I-8-1,I-8-2,I-8-3} yields that
\begin{equation}\label{I-8-exp}
	\begin{aligned}
	\abso{\int_{\Omega} I_8 dx } & \lesssim (\delta+\nu) \norm{\nabla \phi^\md}^2_{L^2(\Omega)} +  \norm{\nabla \psi^\md}^2_{L^2(\Omega)} + \nu \norm{\nabla^2 \zeta^\md}^2_{L^2(\Omega)} + \e \nu e^{-ct}.
\end{aligned}
\end{equation}
Using \cref{Poincare} and Lemmas \ref{Lem-F}, \ref{Lem-zeta-psi} and \ref{Lem-r}, one can verify directly that
\begin{align}
	\sum_{i=9}^{11} \norm{I_i}_{L^1(\Omega)} & \lesssim (\delta+\nu+ \e) \norm{\nabla\phi^\md}_{L^2(\Omega)}^2 + \norm{\nabla\psi^\md}_{L^2(\Omega)}^2 + \e \nu e^{-ct}. \label{I-2-2}
\end{align}
Thus, combining \cref{I-8-exp,I-2-2}, one can integrate \cref{equ-exp-2} over $ \Omega $ to obtain \cref{est-md-2}.
\end{proof}

\vspace{.2cm}

\begin{Lem}\label{Lem-exp-3}
Under the assumptions of \cref{Thm}, there exist $ \delta_0>0, \gamma_0>0 $ and $ \nu_0>0 $ such that if $ \delta \leq \delta_0 $, $ \e \leq \gamma_0 \delta $ and $ \nu \leq \nu_0, $ then
\begin{equation}\label{est-md-3}
	\frac{d}{dt} \Big( \int_\Omega \rhot^s \abso{\nabla\zeta^\md}^2 dx \Big) + a_3 \norm{\nabla^2 \zeta^\md}_{L^2(\Omega)}^2 \lesssim \norm{\nabla\phi^\md, \nabla\psi^\md}_{L^2(\Omega)}^2 + \e e^{-ct}.
\end{equation}
\end{Lem}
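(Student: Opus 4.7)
\textbf{Plan for \cref{Lem-exp-3}.} The target is an $H^1$-type estimate for $\zeta^\md$, in the spirit of \cref{Lem-est6} but for the non-zero mode. The basic strategy is to view \cref{equ-md}$_2$ as an effectively parabolic equation for $\zeta^\md$ alone by substituting the algebraic relation \cref{psi-zeta}, namely $\psi^\md = \rhot^s\zeta^\md + \uvt^s\phi^\md + \rv_3$, into the time derivative. This produces
\begin{equation*}
\rhot^s\,\p_t\zeta^\md - \mu\,\lap\zeta^\md - (\mu+\lambda)\nabla\dv\zeta^\md = \mathbf{R},
\end{equation*}
where $\mathbf{R}$ collects the convective flux $-\sum_{i=1}^3 \p_i(\ut_i^s\psi^\md + \mvt^s\zeta_i^\md + \rv_{1,i})$, the pressure gradient $-\nabla(p'(\rhot^s)\phi^\md + r_2)$, the source $\gv_2^\md$, and the lower-order remainders $-\p_t\rhot^s\,\zeta^\md - \p_t(\uvt^s\phi^\md) - \p_t\rv_3$.

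Next I would take the $L^2$-inner product of this reformulated equation with $-\lap\zeta^\md$. For the time derivative term one uses
\begin{equation*}
\int_\Omega \rhot^s\,\p_t\zeta^\md\cdot(-\lap\zeta^\md)\,dx = \frac{1}{2}\frac{d}{dt}\int_\Omega\rhot^s|\nabla\zeta^\md|^2\,dx - \frac{1}{2}\int_\Omega\p_t\rhot^s\,|\nabla\zeta^\md|^2\,dx + \int_\Omega(\nabla\rhot^s\otimes\p_t\zeta^\md)\!:\!\nabla\zeta^\md\,dx,
\end{equation*}
which yields the desired $\frac{d}{dt}$-term up to $O(\delta)$ corrections, while the viscous pairing yields $\mu\|\lap\zeta^\md\|_{L^2}^2 + (\mu+\lambda)\|\nabla\dv\zeta^\md\|_{L^2}^2$ via the identity \cref{tool2} applied with $\zeta^\md$ in place of $\zeta$. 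Since $\rhot^s$ is uniformly bounded below and $\|\lap\zeta^\md\|_{L^2}^2\gtrsim\|\nabla^2\zeta^\md\|_{L^2}^2$, this produces both the $\frac{d}{dt}$-term in \cref{est-md-3} and the dissipation $a_3\|\nabla^2\zeta^\md\|_{L^2}^2$ after a $\delta$-small loss is absorbed to the right.

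It then remains to estimate $\int_\Omega\mathbf{R}\cdot(-\lap\zeta^\md)\,dx$. The pressure term yields $\|\nabla\phi^\md\|_{L^2}^2$ after Cauchy--Schwarz against $\lap\zeta^\md$ (the factor $p''(\rhot^s)\nabla\rhot^s$ has size $\delta$ by \cref{Lem-shock} and is harmless); the convective flux produces $\|\nabla\psi^\md\|_{L^2}^2 + \|\nabla\zeta^\md\|_{L^2}^2$ up to $O(\delta)$ lower-order pieces, and $\|\nabla\zeta^\md\|_{L^2}^2$ is controlled via \cref{zeta-est}. The nonlinear remainders $\rv_{1,i}$ and $r_2$ are handled by \cref{Lem-r} with a $\nu$-small prefactor, while $\gv_2^\md$ is controlled by \cref{Lem-F} and contributes only $\e e^{-ct}$. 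The factors $\p_t\rhot^s$ and $\p_t\uvt^s$ in the lower-order remainders are of size $\delta$ by \cref{Lem-shock}, yielding at worst $\delta$ times already controlled quantities.

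The main obstacle is the pair $\uvt^s\,\p_t\phi^\md$ and $\p_t\rv_3$ paired with $\lap\zeta^\md$, since naive pointwise bounds produce $\|\nabla^2\phi^\md\|$ or $\|\nabla^2\psi^\md\|$ after one integration by parts, neither of which is available. The remedy is to trade time derivatives for spatial ones: for $\p_t\phi^\md$, substitute the continuity equation $\p_t\phi^\md = -\dv\psi^\md + g_1^\md$ to replace it by $\dv\psi^\md$, which is bounded by $\|\nabla\psi^\md\|_{L^2}$. For $\p_t\rv_3$, whose expanded form is given by \cref{r-3}, apply the product rule and replace each time derivative of $\phi,\zeta,\rhot,\uvt$ using \cref{equ-phizeta} together with the definitions \cref{ansatz,uvt}; each output term carries only one spatial derivative and is bounded using \cref{nu-od,nu-md,ansatz-shock,ansatz-shock-1,Poincare,zeta-est}. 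After these reductions every contribution fits into $\|\nabla\phi^\md,\nabla\psi^\md\|_{L^2}^2$, $(\delta+\nu+\e)\|\nabla^2\zeta^\md\|_{L^2}^2$, or $\e e^{-ct}$, and choosing $\delta,\e,\nu$ small enough to absorb the $\|\nabla^2\zeta^\md\|_{L^2}^2$ feedback into the LHS dissipation delivers \cref{est-md-3}.
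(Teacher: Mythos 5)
Your proposal is correct and follows essentially the same route as the paper: test \cref{equ-md}$_2$ against $-\lap\zeta^\md$, extract the energy $\tfrac{d}{dt}\int_\Omega\rhot^s\abso{\nabla\zeta^\md}^2\,dx$ via the decomposition $\psi^\md=\rhot^s\zeta^\md+\uvt^s\phi^\md+\rv_3$ from \cref{psi-zeta}, trade $\p_t\phi^\md$ for $\dv\psi^\md$ through the continuity equation, and absorb the resulting $\norm{\nabla^2\zeta^\md}_{L^2}^2$ feedback using small prefactors. One inaccuracy to note: when you substitute \cref{equ-phizeta}$_2$ for $\p_t\zeta$ inside $\p_t\rv_3$, the output \emph{does} carry the second-order terms $\lap\zeta$ and $\nabla\dv\zeta$, so the claim that ``each output term carries only one spatial derivative'' is false; what saves the argument (and what the paper makes explicit via its bound \cref{ineq-2} on $\norm{\p_t\zeta^\md}_{L^2}$) is that every summand of $\rv_3$ in \cref{r-3} carries a prefactor of size $\nu$ or $\e e^{-ct}$, so the resulting $\norm{\nabla^2\zeta^\md}_{L^2}^2$ contribution is $(\delta+\nu)$-small and absorbable.
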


\begin{proof}

Multiply $-\lap \zeta^\md$ on \cref{equ-md}$_2$ yields that
\begin{equation}\label{equ-exp-3}
	\begin{aligned}
	& - \p_t \psi^\md \cdot \lap \zeta^\md + \mu \abso{\lap \zeta^\md}^2 + (\mu+\lambda) \abso{\nabla\dv\zeta^\md}^2 \\
	& \quad = \dv (\cdots) + \lap \zeta^\md \cdot \Big[ \sum\limits_{i=1}^3 \p_i \big( \ut_i^s \psi^\md + \mvt^s \zeta_i^\md + \mathbf{r}_{1,i} \big) + \nabla \big(p'(\rhot^s) \phi^\md + r_2 \big) + \gv_2^\md \Big].
\end{aligned}
\end{equation}
where $ (\cdots) = \dv\zeta^\md \big(\lap \zeta^\md - \nabla \dv\zeta^\md \big). $
Thus, by using \cref{Poincare} and Lemmas \ref{Lem-zeta-psi} and \ref{Lem-r}, one can integrate \cref{equ-exp-3} over $\Omega$ to obtain that
\begin{equation}\label{ineq-3}
	\begin{aligned}
		& - \int_\Omega \p_t \psi^\md \cdot \lap \zeta^\md dx  + a_3 \norm{\nabla^2 \zeta^\md}_{L^2(\Omega)}^2 \lesssim \norm{\nabla \psi^\md,\nabla \phi^\md}_{L^2(\Omega)}^2 + \e e^{-ct}
	\end{aligned}
\end{equation}
for some constant $ a_3>0. $
Using \cref{psi-zeta,r-3}, one has that
\begin{equation}\label{equ-2}
	\begin{aligned}
	& - \p_t \psi^\md \cdot \lap \zeta^\md \\
	& \quad = \p_t \Big( \frac{1}{2} \rhot^s \abso{\nabla \zeta^\md}^2 \Big) - \dv \big( \rhot^s \nabla\zeta^\md \p_t \zeta^\md \big) + \nabla \rhot^s \cdot \nabla\zeta^\md \p_t \zeta^\md - \frac{1}{2} \p_t \rhot^s \abso{\nabla\zeta^\md}^2  \\
	& \qquad + O(1) \abso{\lap \zeta^\md} \big[ (\delta+\nu) \big(\abso{\zeta^\md} + \abso{\phi^\md} + \abso{\p_t \phi^\md}  + \abso{\p_t \zeta^\md} \big) \\
	& \qquad\qquad\qquad\qquad\quad + \e e^{-ct} \big( \abso{\zeta} + \abso{\phi} + \abso{\p_t\zeta} + \abso{\p_t \phi} \big) \\
	& \qquad\qquad\qquad\qquad\quad + \abso{\p_t \phi^\od} \abso{\zeta^\md} + \abso{\p_t \zeta^\od} \abso{\phi^\md} \big]. 
\end{aligned}
\end{equation}
By \cref{equ-phizeta} and \cref{Lem-od-md}, one can prove that
\begin{align}
	\norm{\p_t \phi^\od}_{L^\infty(\R)} & \lesssim \norm{\p_t \phi^\od}_{H^1(\R)} \lesssim \norm{\p_t \phi}_{H^1(\Omega)}  \lesssim \norm{\zeta}_{H^2(\Omega)} + \norm{\phi}_{H^2(\Omega)} + \norm{g_1}_{H^1(\Omega)} \notag \\
	& \lesssim \nu + \e,  \label{small-phi-od}
\end{align}
and
\begin{align}
	\norm{\p_t \zeta^\od}_{L^\infty(\R)} & \lesssim \norm{\p_t \zeta^\od}_{H^1(\R)} \lesssim \norm{\p_t \zeta}_{H^1(\Omega)}  \lesssim \norm{\zeta}_{H^3(\Omega)} + \norm{\phi}_{H^2(\Omega)} + \norm{\gv_2, g_1}_{H^1(\Omega)} \notag \\
	& \lesssim \nu + \e. \label{small-zeta-od}
\end{align}
Moreover, it follows from \cref{equ-md} and Lemmas \ref{Lem-zeta-psi} and \ref{Lem-r} that
\begin{equation}\label{pt-phi-psi}
	\begin{aligned}
	\norm{\p_t \phi^\md}_{L^2(\Omega)} & \lesssim \norm{\nabla\psi^\md}_{L^2(\Omega)} + \e e^{-ct}, \\
	\norm{\p_t \psi^\md}_{L^2(\Omega)} & \lesssim \norm{\nabla\psi^\md, \nabla \phi^\md}_{L^2(\Omega)} + \e e^{-ct} + \norm{\nabla^2 \zeta^\md}_{L^2(\Omega)}.
	\end{aligned}
\end{equation}
Using \cref{psi-zeta,r-3}, one has that
\begin{align*}
	\norm{\p_t \zeta^\md}_{L^2(\Omega)} & \lesssim \norm{\p_t \psi^\md}_{L^2(\Omega)} + \delta\norm{\psi^\md}_{L^2(\Omega)}  + \delta \norm{\phi^\md, \p_t \phi^\md}_{L^2(\Omega)}  \\
	& \quad + \e e^{-ct} \norm{\zeta,\phi,\p_t \zeta, \p_t \phi}_{L^2(\Omega)} + \nu \norm{ \p_t \zeta^\md, \p_t \phi^\md}_{L^2(\Omega)} \\
	& \quad + \norm{\p_t \phi^\od,\p_t \zeta^\od}_{L^\infty(\R)} \norm{\zeta^\md, \phi^\md}_{L^2(\Omega)},
\end{align*}
which, together with \cref{small-phi-od,small-zeta-od,pt-phi-psi}, yields that
\begin{equation}\label{ineq-2}
	\norm{\p_t \zeta^\md}_{L^2(\Omega)} \lesssim \norm{\nabla\psi^\md, \nabla\phi^\md}_{L^2(\Omega)} + \e e^{-ct} + \norm{\nabla^2 \zeta^\md}_{L^2(\Omega)}.
\end{equation}
Using \cref{small-phi-od,small-zeta-od,pt-phi-psi,zeta-est,Poincare}, one can integrate \cref{equ-2} to get that
\begin{align*}
	& \int_{\Omega} \lap \zeta^\md \cdot \p_t \psi^\md dx + \frac{d}{dt} \int_\Omega \Big( \frac{1}{2} \rhot^s \abso{\nabla\zeta^\md}^2 \Big) dx \\ 
	& \qquad \lesssim \norm{\nabla\phi^\md,\nabla\psi^\md}_{L^2(\Omega)}^2 +  (\delta +\nu) \norm{\nabla^2 \zeta^\md}_{L^2(\Omega)}^2 + \e e^{-ct}.
\end{align*}
This, together with \cref{ineq-3}, can finish the proof.

\end{proof}

Let $ A_1 $ and $ A_2 $ be two positive constants to be determined. 
Multiplying the constants $ A_1 $ and $ A_2 $ on \cref{est-md-1} and $ \cref{est-md-2} $, respectively, and then summing the resulting two inequalities and \cref{est-md-3} together, one can obtain that
\begin{align*}
	E'(t) + H(t) & \leq C \ \Big[ \big((\delta+\nu) A_1 + 1\big) \norm{\nabla\phi^\md}_{L^2(\Omega)}^2 + (A_2+1) \norm{\nabla\psi^\md}_{L^2(\Omega)}^2  \\
	& \qquad \quad + \nu A_2 \norm{\nabla^2\zeta^\md}_{L^2(\Omega)}^2 + \e (A_1+A_2+1) e^{-ct} \Big],
\end{align*}
where
\begin{equation*}
	\begin{aligned}
		E(t) & := A_1 \int_\Omega \big( \abso{p'(\rhot^s)} \abso{\phi^\md}^2 + \abso{\psi^\md}^2 \big) dx \\
		& \qquad  + A_2 \int_\Omega \Big(\frac{\mut}{2\rhot^s} \abso{\nabla \phi^\md}^2 + \psi^\md\cdot \nabla\phi^\md \Big) dx + \int_\Omega \rhot^s \abso{\nabla\zeta^\md}^2 dx,
	\end{aligned}
\end{equation*}
and 
\begin{align*}
	H(t) := a_1 A_1 \norm{\nabla\psi^\md}_{L^2(\Omega)}^2 + a_2 A_2 \norm{\nabla\phi^\md}_{L^2(\Omega)}^2 + a_3 \norm{\nabla^2 \zeta^\md}_{L^2(\Omega)}^2.
\end{align*}
If $ \delta $ and $ \nu $ are small, then one can choose some generic constants $ A_1 > A_2 \geq 1 $ such that
\begin{equation}\label{A}
	\begin{cases}
		\frac{1}{2} a_1 A_1 \geq C(A_2+1), \\
		\frac{1}{2} a_2 A_2 \geq C \big((\delta+\nu) A_1 + 1 \big), \\
		\frac{1}{2} a_3 \geq (\delta+\nu) A_2,
	\end{cases}
\end{equation}
which yields that
\begin{equation}\label{ineq-4}
	E'(t) + \frac{1}{2} H(t) \leq C \e (A_1+A_2+1) e^{-ct}.
\end{equation}
It follows from \cref{zeta-est,Poincare} that
\begin{align*}
	E(t) 
	& \lesssim A_1 \big(\norm{\nabla \phi^\md}_{L^2(\Omega)}^2 +  \norm{\nabla\psi^\md}_{L^2(\Omega)}^2 \big)  + \e \nu e^{-ct} \\
	& \lesssim A_1 H(t)  + \e \nu e^{-ct}.
\end{align*}
Then by \cref{ineq-4}, there exist some generic constants $ c_1>0 $ and $ C_2 >0 $ such that
\begin{align*}
	E'(t) + c_1 E(t) \leq C_2 \e e^{-ct}.
\end{align*}
Without loss of generality, we can let $ c_1 < c $. Then it holds that
\begin{equation}\label{ineq-5}
	E(t) \lesssim (\e+\nu) e^{- c_1 t} \qquad \forall t>0.
\end{equation}
Note that one can let $ A_1 $ be suitably larger than $ A_2 $ (which is compatible with \cref{A}) such that 
\begin{align*}
	E(t) & \gtrsim A_1 \big(\norm{\phi^\md}_{L^2(\Omega)}^2 + \norm{\psi^\md}_{L^2(\Omega)}^2\big) + A_2 \norm{\nabla \phi^\md}_{L^2(\Omega)}^2 + \norm{\nabla\zeta^\md}_{L^2(\Omega)}^2.
\end{align*}
This, together with \cref{psi-nab-est,Poincare}, yields that
\begin{equation}\label{ineq-7}
	E(t) \gtrsim \norm{\phi^\md}_{H^1(\Omega)}^2 + \norm{\psi^\md}_{H^1(\Omega)}^2 - \e \nu e^{-ct}.
\end{equation}
Then using \cref{zeta-est,Poincare}, one has that
\begin{align*}
	E(t) \gtrsim  \norm{\phi^\md}_{H^1(\Omega)}^2 + \norm{\zeta^\md}_{H^1(\Omega)}^2 - \e \nu e^{-ct},
\end{align*}
which yields that
\begin{equation}\label{ineq-6}
	\norm{\big(\phi^\md,\zeta^\md\big)}_{H^1(\Omega)}^2 \lesssim (\e+\nu) e^{-c_1 t}.
\end{equation}
Similar to \cref{gn-1-2,gn-3}, one can use \cref{Lem-GN}, \cref{ineq-6,nu-md-2} to get that
\begin{align*}
	\norm{\phi^\md, \zeta^\md}_{L^\infty(\Omega)} \lesssim \nu^{\frac{1}{2}} \norm{\nabla\big(\phi^\md, \zeta^\md\big)}_{L^2(\Omega)}^{\frac{1}{2}} \lesssim (\e+\nu) e^{- \frac{c_1}{2} t}  \qquad \forall t>0.
\end{align*}
Thus, \cref{exp-pert} holds true and the proof of \cref{Thm-pertur} is finished.

\vspace{.5cm}

\textbf{Proof of \cref{Thm}.}
Once \cref{Thm-pertur} is proved, it is straightforward to obtain \cref{Thm} except for \cref{behavior,exp-Thm}. 
In fact, it follows from \cref{ansatz} and Lemmas \ref{Lem-per}, \ref{Lem-shift} and \ref{Lem-shock} that
\begin{align*}
	& \norm{(\rho,\uv)(x,t)-(\rho^s, u_1^s,0,0)(x_1-st-X_\infty)}_{W^{1,\infty}(\R^3)} \\
	& \quad \lesssim \norm{(\phi,\zeta)}_{W^{1,\infty}(\Omega)} + \norm{\rhot(x,t)-\rho^s(x_1-st-X(t))}_{W^{1,\infty}(\Omega)} \\
	& \qquad  + \norm{\uvt(x,t)-\uv^s(x_1-st-Y(t))}_{W^{1,\infty}(\Omega)} + \delta^2 \big(\abso{X(t)-X_\infty}+\abso{Y(t)-X_\infty}\big) \\
	& \quad \lesssim \norm{(\phi,\zeta)}_{W^{1,\infty}(\Omega)} + \delta^{-1} \norm{(v_\pm,\wv_\pm)}_{W^{1,\infty}(\Torus^3)} + \delta \e e^{-ct} \\
	& \quad \lesssim \norm{(\phi,\zeta)}_{W^{1,\infty}(\Omega)} + \gamma_0 e^{-ct}.
\end{align*}
This, together with \cref{asymp}, yields \cref{behavior} immediately.

To prove \cref{exp-Thm}, we firstly note that
\begin{align*}
	(\rho^\md, \uv^\md) = (\rhot^\md, \uvt^\md) + (\phi^\md, \zeta^\md).
\end{align*}
It follows from \cref{ansatz} that
\begin{align*}
	\rhot^\md = \rho^\md_- (1-\eta_{st+X(t)}) + \rho_+^\md \eta_{st+X(t)},
\end{align*}
which satisfies that
\begin{equation}\label{exp-rho}
	\norm{\rhot^\md}_{L^\infty(\Omega)} \lesssim \norm{\rho_-^\md, \rho_+^\md}_{L^\infty(\Omega)} \lesssim \e e^{-ct}.
\end{equation}
It is similar to use \cref{ansatz} to prove that
\begin{equation}\label{exp-m}
	\norm{\mvt^\md}_{L^\infty(\Omega)} \lesssim \norm{\mv_-^\md, \mv_+^\md}_{L^\infty(\Omega)} \lesssim \e e^{-ct}.	
\end{equation}
Since it holds that 
\begin{align*}
	\mvt^\md = (\rhot \uvt)^\md = \rhot \uvt^\md + \rhot^\md \uvt^\od - \int_{\Torus^2} \rhot^\md \uvt^\md d\xp,
\end{align*}
then it follows from \cref{exp-rho,exp-m} that
\begin{align*}
	\norm{\uvt^\md}_{L^\infty(\Omega)} & \lesssim \norm{\mvt^\md}_{L^\infty(\Omega)} + \norm{\rhot^\md}_{L^\infty(\Omega)} \big(\norm{\uvt^\od}_{L^\infty(\R)} + \norm{\uvt^\md}_{L^\infty(\Omega)} \big) \\
	& \lesssim \e e^{-ct}.
\end{align*}
Thus, one can obtain that
\begin{align*}
	\norm{(\rho^\md, \uv^\md)}_{L^\infty(\Omega)} & \lesssim \norm{(\rhot^\md, \uvt^\md)}_{L^\infty(\Omega)} + \norm{(\phi^\md, \zeta^\md)}_{L^\infty(\Omega)} \\
	& \lesssim \e e^{-ct} + (\e+\nu) e^{-\frac{c_1}{2} t} \\
	& \lesssim  (\e+\nu) e^{-\frac{c_1}{2} t}.
\end{align*}
The proof of \cref{Thm} is finished.

\vspace{.5cm}

\appendix

\section{Shift curves}\label{App-shift}

This section is devoted to the proof of \cref{Lem-shift}.

As the periodic solutions $ (\rho_\pm, \mv_\pm) $ belong to the $ W^{4,\infty}(\Torus^3) $ space, then each $ \mathcal{L}_i(d,t) $ in \cref{L} is $ C^4 $ and $ C^2 $ with respect to  $ d $ and $ t $, respectively.
Thus, the existence and uniqueness of the $ C^3 $ solution of the problem \cref{ode-XY}, \cref{ic-XY} can be derived from the Cauchy-Lipschitz theorem. 
It follows from \cref{L,decay-vw} that
\begin{align*}
	\mathcal{L}_1(d,t) & = \jump{m_1} + O(1) \e e^{-ct}, \\
	\mathcal{L}_2(d,t) & = \jump{\rho}  + O(1) \e e^{-ct}, \\
	\mathcal{L}_3(d,t) & = \jump{u_1 m_1} + \jump{p(\rho)}  + O(1) \e e^{-ct},
\end{align*}
which yields \cref{decay-XYp} immediately. 
Then it remains to prove \cref{decay-XY}, and we only compute the limit of $ Y(t) $, since the other one is similar. 

Following \cite{XYYindi,HY1shock,Yuan2021}, for fixed $ r\in [0,1], $ $ t>0 $ and integer $ N>0 $, we define the compact subset of $ \Omega\times\R_+ $,
\begin{align*}
	\Omega_{(r,t)}^N & = \{ (x, \tau): \Gamma_{r,-}^N(\tau) < x_1 < \Gamma_{r,+}^N(\tau), \xp \in \Torus^2, 0<\tau<t \} \\
	\text{with} \quad \Gamma_{r,\pm}^N(\tau)  & := r+ s\tau + Y(\tau) \pm N.
\end{align*}
Then integrating the second equation of \cref{equ-ansatz}, i.e.
\begin{align*}
	\p_t \mt_1 + \sum_{i=1}^{3} \p_i (\ut_i \mt_1) + \p_1 p(\rhot) - \mu \lap \ut_1 - (\mu+\lambda) \p_1 \dv \ut = -\sum_{i=1}^{3} \p_i F_{3,i1} - f_{4,1},
\end{align*}
over the domain $ \Omega_{(r,t)}^N $ yields that
\begin{equation}\label{equ-app}
	\begin{aligned}
	& \underbrace{\int_{\Gamma_{r,-}^N(0)}^{\Gamma_{r,+}^N(0)} \int_{\Torus^2} \mt_1(x,0) d\xp dx_1 - \int_{\Gamma_{r,-}^N(t)}^{\Gamma_{r,+}^N(t)} \int_{\Torus^2} \mt_1(x,t) d\xp dx_1}_{I_1} \\
	& \qquad + \underbrace{\int_0^t \int_{\Torus^2} \Big[ (s+ Y') \mt_1 - \ut_1 \mt_1 - p(\rhot) + \mut \p_1 \ut_1 \Big]\big(\Gamma_{r,+}^N(\tau), \xp, \tau\big) d\xp d\tau}_{I_2} \\
	& \qquad - \underbrace{\int_0^t \int_{\Torus^2} \Big[ (s+ Y') \mt_1 - \ut_1 \mt_1 - p(\rhot) + \mut \p_1 \ut_1 \Big]\big(\Gamma_{r,-}^N(\tau), \xp, \tau\big) d\xp d\tau}_{I_3} \\
	& \quad = - \int_{\Omega_{(r,t)}^N} \big( \p_1 F_{3,11} + f_{4,1} \big) dxd\tau.
\end{aligned}
\end{equation}
Here we have used the fact that the ansatz is periodic in $ \xp \in \Torus^2. $ Then one can use \cref{Lem-per} and the dominated convergence theorem to prove that
\begin{align}
	\lim_{N\to+\infty} \int_0^1 I_1 dr & = \lim_{N\to+\infty} \int_0^1 \Bigg\{ - \int_{r-N}^{r} \int_{\Torus^2} (w_{1+}-w_{1-})(x_1+st+Y(t), \xp,t) \eta(x_1) dx \notag \\
	& \qquad \qquad\qquad + \int_{r}^{r+N} \int_{\Torus^2} (w_{1+}-w_{1-})(x_1+st+Y(t), \xp,t) (1-\eta) dx \Bigg\} dr \notag \\
	& = O(1) \e\delta^{-1} e^{-ct}. \label{I-1-app}
\end{align}
Note that
\begin{align*}
	\abso{\rhot-\rho_+} \lesssim \delta (1-\eta_{st+Y}), \qquad \abso{\mt_1 - m_{1+}} \lesssim \delta (1-\eta_{st+Y}).
\end{align*}
Then it follows from \cref{decay-vw} that
\begin{align}
	\lim_{N\to+\infty} \int_0^1 I_2 dr & = \int_0^t \int_{\Torus^3} \big[ (s+ Y') m_{1+} - u_{1+} m_{1+} - p(\rho_+) \big](\Gamma_{r,+}^N(\tau),\xp,\tau) dx d\tau \notag \\
	& =  \mb_{1+} \big(st + Y(t) - Y_0 \big) - \big(\ub_{1+} \mb_{1+} + p(\rhob_+)\big) t \notag \\
	& \quad  - \int_0^t \int_{\Torus^3} \big(u_{1+} m_{1+} - \ub_{1+} \mb_{1+} + p(\rho_+) - p(\rhob_+) \big) dx d\tau \notag \\
	& = \mb_{1+} \big(st + Y(t) - Y_0\big) - \big(\ub_{1+} \mb_{1+} + p(\rhob_+)\big) t \notag \\
	& \quad - \int_0^{+\infty} \int_{\Torus^3} \big(u_{1+} m_{1+} - \ub_{1+} \mb_{1+} + p(\rho_+) - p(\rhob_+) \big) dx d\tau + O(1) \e e^{-ct}.   \label{I-2-app}
\end{align}
Similarly, it holds that
\begin{align}
	\lim_{N\to+\infty} \int_0^1 I_3 dr & = \mb_{1-} \big(st + Y(t) - Y_0\big) - \big(\ub_{1-} \mb_{1-} + p(\rhob_-)\big) t \notag \\
	& \quad - \int_0^{+\infty} \int_{\Torus^3} \big(u_{1-} m_{1-} - \ub_{1-} \mb_{1-} + p(\rho_-) - p(\rhob_-) \big) dx d\tau + O(1) \e e^{-ct}. \label{I-3-app}
\end{align}
Since $ F_{3,11} $ vanishes as $ \abso{x_1} \to 0 $ and $ f_{4,1} $ has zero mass on $ \Omega, $ one can get that
\begin{align*}
	\lim_{N\to+\infty} \int_0^1 \int_{\Omega_{(r,t)}^N} \big( \p_1 F_{3,11} + f_{4,1} \big) dxd\tau dr = 0.
\end{align*}
This, together with \cref{I-1-app,I-2-app,I-3-app}, can complete the proof.

\section{Errors of ansatz}\label{App-g}
This section is devoted to the proof of \cref{Lem-F}.

We prove only the $ L^2 $-norms of $ G_2 $ and $ \gv_2, $ since the other estimates are similar.
For convenience, in this section we write $ a \simeq b $ and $ a \cong b $ when $ \norm{a-b}_{L^2(\Omega)} \lesssim \e \delta^{-\frac{1}{2}} e^{-ct} $ and $ \norm{a-b}_{L^2(\Omega)} \lesssim \e \delta^{1/2} e^{-ct} $ hold, respectively. Set
\begin{equation*}
	\etat := \eta_{st+X_\infty} = \eta(x_1-st-X_\infty).
\end{equation*}
Then by \cref{shock-ts}, it holds that
\begin{align*}
	(\rhot^s, \mvt_1^s) & = (\rhob_-,\mvb_-) (1-\etat) + (\rhob_+,\mvb_+) \etat,
\end{align*}
which yields that
\begin{equation}\label{uts}
	\uvt^s = \frac{\mt_1^s}{\rhot^s} = \uvb_- (1-\etat) + \uvb_+ \etat + \frac{1}{\rhot^s} \jump{\rho} \jump{\uv}  \etat (1-\etat).
\end{equation}
It follows from Lemmas \ref{Lem-shift} and \ref{Lem-shock} that
\begin{align*}
	& \norm{\etat (1-\etat)}_{L^2(\R)} \lesssim \delta^{-\frac{1}{2}}, \quad \norm{\etat'}_{L^2(\R)} \lesssim \delta^{\frac{1}{2}}, \quad \norm{\etat''}_{L^2(\R)} \lesssim \delta^{\frac{3}{2}},
\end{align*}
and 
\begin{align*}
	\eta_{st+X} \simeq \eta_{st+Y} \simeq \etat.
\end{align*}
Thus, using \cref{ansatz}, it holds that
\begin{equation}\label{rho-mt}
	\rhot \cong \rho_- (1-\etat) + \rho_+ \etat \quad \text{and} \quad \mvt \cong \mv_- (1-\etat) + \mv_+ \etat.
\end{equation}
And \cref{small-u} yields that
\begin{align}
	\uvt = \frac{\mvt}{\rhot} & \cong \Big( \frac{1}{\rhot} - \frac{1}{\rho_-}\Big) \mv_- (1-\etat) + \Big( \frac{1}{\rhot} - \frac{1}{\rho_+}\Big) \mv_+ \etat + \uv_- (1-\etat) + \uv_+ \etat \notag \\
	& \cong \frac{\rhob_+ - \rhob_-}{\rhot^s} \etat (1-\etat) (\uvb_+ - \uvb_-) + \uv_- (1-\etat) + \uv_+ \etat \notag \\
	& = \uvt^s + \zv_- (1-\etat) + \zv_+ \etat, \label{ut}
\end{align}
where we have used \cref{uts} and the fact that
\begin{align*}
	\frac{\rho_+ - \rho_-}{\rhot} \etat (1-\etat) (\uv_+ - \uv_-) & \cong \frac{\rhob_+ - \rhob_-}{\rhot^s} \etat (1-\etat) (\uv_+ - \uv_-) \\
	& \cong \frac{\rhob_+ - \rhob_-}{\rhot^s} \etat (1-\etat) (\uvb_+ - \uvb_-).
\end{align*}
Similarly, one can prove that for $ i=1,2,3, $
\begin{align*}
	\p_i \rhot \cong \p_i \rho_- (1-\etat) + \p_i \rho_+ \etat, \qquad \p_i \mvt \cong \p_i \mv_- (1-\etat) + \p_i \mv_+ \etat,
\end{align*}
and
\begin{equation}\label{utd}
	\p_i \uvt \cong \p_i \uvt^s + \p_i \zv_- (1-\etat) + \p_i \zv_+ \etat.
\end{equation}

\vspace{.2cm}

1) We first estimate $ G_2 = F_{3,11}+F_{4,1}, $ where $ F_{3,11} $ and $ F_{4,1} :=\int_{-\infty}^{x_1} f_{4,1}^{\od} dy_1 $ are given by \cref{F} and \cref{G}, respectively. 

By the definition of $ \Fv_{3,i} $  in \cref{F}, we write that $ \Fv_{3,i} = \sum_{k=1}^4 I_k^{(i)}, $
where each $ I_k^{(i)} $ denotes the $ k $-th line of the right-hand side of the definition. 
Using \cref{Lem-per}, \cref{rho-mt,ut}, one has that
\begin{align}
	I_1^{(i)} & \cong (u_{i-}-\ut_i) \mv_- (1-\etat) + (u_{i+}-\ut_i) \mv_+ \etat \notag \\
	& \cong (u_{i-} - \ut_i^s - z_{i-} ) \mv_- (1-\etat) + (u_{i+} - \ut_i^s - z_{i+} ) \mv_+ \etat \notag \\
	& \cong (\ub_{i-} - \ut_i^s) \mvb_- (1-\etat) + ( \ub_{i+} - \ut_i^s ) \mvb_+ \etat \notag \\
	& = \delta_{1i} \big[ \ub_{1-} \mb_{1-} (1-\etat) + \ub_{1+} \mb_{1+} \etat - \ut_1^s \mt_1^s \big] \E_1. \label{l-1}
\end{align} 
For the pressure law, define
\begin{align*}
	\sigma(\rho,\rhot) := \int_0^1 p'(\rhot + r (\rho-\rhot)) dr.
\end{align*}
Then it follows from \cref{Lem-per} and \cref{rho-mt} that
\begin{align}
	I_2^{(i)} & \cong \big[\sigma(\rho_-, \rhot) (\rho_- -\rhot) (1-\etat) + \sigma(\rho_+, \rhot) (\rho_+ -\rhot) \etat\big] \E_i \notag \\
	& \cong \big[\sigma(\rhob_-, \rhot^s) (\rhob_- - \rhot^s) (1-\etat) + \sigma(\rhob_+, \rhot^s) (\rhob_+ - \rhot^s) \etat\big] \E_i \notag \\
	& = \big[p(\rhob_-) (1-\etat) + p(\rhob_+) \etat - p(\rhot^s)\big] \E_i, \label{l-2}
\end{align}
where the fact that $ \abso{\sigma(\rho_-, \rhot) - \sigma(\rho_+, \rhot)} \lesssim \delta $ is used.
And using \cref{ut,utd}, one can get that
\begin{equation}\label{l-3-4-5}
	\begin{aligned}
	I_3^{(i)} & \cong - \mu  \big[ \p_i \zv_- (1-\etat) + \p_i \zv_+ \etat - \p_i \uvt \big] \notag \\
	& \cong \delta_{1i} \mu \big[  \p_1 \ut_1^s \E_1 + (\zv_+ - \zv_- ) \p_i\etat \big] \\
	& \cong \delta_{1i} \mu \p_1 \ut_1^s \E_1, \\
	I_4^{(i)} & \cong - (\mu+\lambda)  \big[\dv \zv_- (1-\etat) + \dv \zv_+ \etat - \dv \uvt \big] \E_i \\
	& \cong (\mu+\lambda) \p_1 \ut_1^s \E_i.
	\end{aligned}
\end{equation}
Collecting \cref{l-1,l-2,l-3-4-5} yields that
\begin{equation}\label{F-3i}
	\Fv_{3,i} \cong A_-^{(i)} (1-\etat) + A_+^{(i)} \etat - B^{(i)},
\end{equation}
where 
\begin{align*}
	A_-^{(i)} & = \delta_{1i} \ub_{1-} \mb_{1-}  \E_1 + p(\rhob_-) \E_i, \\
	A_+^{(i)} & = \delta_{1i} \ub_{1+} \mb_{1+} \E_1 + p(\rhob_+) \E_i, \\
	B^{(i)} & = \delta_{1i} \ut_1^s \mt_1^s \E_1 + p(\rhot^s) \E_i  - \p_1 \ut_1^s \big[ \delta_{1i} \mu \E_1 + (\mu+\lambda) \E_i\big].
\end{align*}
Thus, one has that
\begin{equation}\label{F-311}
	F_{3,11} \cong a_{3,-} (1-\etat) + a_{3,+} \etat - b_3, 
\end{equation}
where $ a_{3,-} = \ub_{1-} \mb_{1-} + p(\rhob_-) $, $ a_{3,+} = \ub_{1+} \mb_{1+} + p(\rhob_+) $ and $ b_3 = \ut_1^s \mt_1^s + p(\rhot^s)  - \mut  \p_1 \ut_1^s $.

\vspace{.2cm}

For $ F_{4,1}, $ it follows from \cref{F,rho-mt,ut} that
\begin{equation}\label{F-4}
	F_{4,1} = \int_{-\infty}^{x_1} \int_{\Torus^2} f_{4,1}(y_1, \xp,t) d\xp dy_1 = a_4 \eta_{st+Y} + R_-,	\quad x_1 \in\R,t>0,
\end{equation}
where 
\begin{align*}
	a_4 = s \jump{m_1} - \jump{u_1m_1} - \jump{p}
\end{align*}
and the remainder $ R_- $ satisfies that
\begin{align*}
	\Big(\int_{-\infty}^{st} \abso{R_-}^2 dx_1\Big)^{\frac{1}{2}} \lesssim \e e^{-ct} \Big(\int_{-\infty}^{st} \abso{\eta_{st+Y}}^2 dx_1\Big)^{\frac{1}{2}} \lesssim \e \delta^{-1/2} e^{-ct}.
\end{align*}
Meanwhile, by \cref{zero-mass-f}, one has that
\begin{equation}\label{F-4-1}
	F_{4,1} = -\int_{x_1}^{+\infty} \int_{\Torus^2} f_{4,1}(y_1, \xp,t) d\xp dy_1  = - a_4 (1-\eta_{st+Y}) + R_+, \quad x_1 \in\R,t>0,
\end{equation}
where the remainder $ R_+ $ satisfies that
\begin{align*}
	\Big(\int_{st}^{+\infty} \abso{R_+}^2 dx_1\Big)^{\frac{1}{2}} \lesssim \e e^{-ct} \Big(\int_{st}^{+\infty} \abso{1-\eta_{st+Y}}^2 dx_1\Big)^{\frac{1}{2}} \lesssim \e \delta^{-1/2} e^{-ct}.
\end{align*}
Thus, one has that
\begin{align*}
	F_{4,1} \cong a_4 \etat + R_- \cong - a_4 (1-\etat) + R_+.
\end{align*}
It follows from \cref{G}$ _2, $ \cref{F-311,F-4} that
\begin{align}
	\Big(\int_{-\infty}^{st} \abso{G_2}^2 dx_1\Big)^{\frac{1}{2}} & \lesssim \Big(\int_{-\infty}^{st} \abso{F_{3,11} + F_{4,1}}^2 dx_1\Big)^{\frac{1}{2}} \notag \\
	& \lesssim \Big(\int_{-\infty}^{st} \abso{(a_{3,-}-b_3)+ (a_{3,+}-a_{3,-}+a_4)\etat }^2 dx_1\Big)^{\frac{1}{2}} + \e \delta^{-1/2} e^{-ct} \notag \\
	& \lesssim \e \delta^{-1/2} e^{-ct}, \label{G-2-minus}
\end{align}
where we have used the fact that
\begin{align*}
	& (a_{3,-}-b_3)+ (a_{3,+}-a_{3,-}+a_4)\etat \\
	& \qquad = \ub_{1-} \mb_{1-} - \ut_1^s \mt_1^s + p(\rhob_-) - p(\rhot^s) + \mut (\ut_1^s)' + s ( \mt_1^s - \mb_{1-}) \\
	& \qquad = 0.
\end{align*}
Similarly, one can use \cref{F-311,F-4-1} to prove that
\begin{align*}
	\Big(\int_{st}^{+\infty} \abso{G_2}^2 dx_1\Big)^{\frac{1}{2}}  & \lesssim \Big(\int_{st}^{+\infty} \abso{(a_{3,+}-b_3) - (a_{3,-} + a_{3,+} - a_4)(1-\etat) }^2 dx_1\Big)^{\frac{1}{2}} + \e \delta^{-1/2} e^{-ct} \notag \\ 
	& \lesssim \e \delta^{-1/2} e^{-ct},
\end{align*}
which, together with \cref{G-2-minus}, yields that $ G_2 \simeq 0. $

\vspace{.3cm}

2) Then we estimate $ \gv_2 $ in \cref{equ-phipsi}$ _2 $.
Similar to \cref{F-3i}, one can prove that
\begin{align*}
	\p_i \Fv_{3,i} & \cong \p_i \big( A_-^{(i)} (1-\etat) + A_+^{(i)} \etat - B^{(i)} \big) \\
	& = \delta_{1i} \big(A_+^{(i)} - A_-^{(i)} \big) \p_1 \etat  - \p_i B^{(i)} \\
	& = \delta_{1i} \big[ \big(\jump{u_1 m_1} + \jump{p} \big) \etat' - (\ut_1^s \mt_1^s)' - \big(p(\rhot^s)\big)' + \mut (\ut_1^s)'' \big] \E_1 \\
	& = 0,
\end{align*} 
where we have used \cref{RH}$ _2 $ and \cref{ode-shock}$ _2 $.
Moreover, it follows from \cref{RH}$ _2 $ and Lemmas \ref{Lem-per} and \ref{Lem-shift} that
\begin{align*}
	\fv_4 \cong \big(\jump{m_1} s - \jump{u_1 m_1} - \jump{p}\big) \etat' \E_1 = 0.
\end{align*}
Thus, one has that $ \gv_2 \cong 0. $

\vspace{.4cm}

\textbf{Conflict of interest statement.} The author has no conflict of interest to declare.


\providecommand{\bysame}{\leavevmode\hbox to3em{\hrulefill}\thinspace}
\providecommand{\MR}{\relax\ifhmode\unskip\space\fi MR }
\providecommand{\MRhref}[2]{%
	\href{http://www.ams.org/mathscinet-getitem?mr=#1}{#2}
}
\providecommand{\href}[2]{#2}



\end{document}